\theoremstyle{plain} 
\newtheorem{theorem}{Theorem}[section]
\newtheorem{lemma}[theorem]{Lemma}
\newtheorem{corollary}[theorem]{Corollary}
\theoremstyle{definition}
\newtheorem{definition}[theorem]{Definition}
\newtheorem{remark}[theorem]{Remark}
\newcommand{\imp}{\rightarrow} 
\newcommand{\Ifff}{\Leftrightarrow}
\newcommand{\en}{\wedge} 
\newcommand{\ifff}{\leftrightarrow}
\newcommand{\E}{\exists}
\newcommand{\A}{\forall} 
\newcommand{\Ap}{\forall^{\circ}\hspace{-.3mm}p\hspace{.2mm}}
\newcommand{\Apd}{\forall^{\diamond}\hspace{-.3mm}p\hspace{.2mm}}
\newcommand{\Apm}{\forall^{\circ}\!\!\!_{{\cal M}} \hspace{-.08mm}p\hspace{.2mm}} 
\newcommand{\Apmcf}{\forall\!_{{\cal M}} \hspace{-.08mm}p\hspace{.2mm}} 
\newcommand{\Apax}{\forall^{\circ}\!\!\!_{{\cal A}} \hspace{-.08mm}p\hspace{.2mm}}
\newcommand{\Ep}{\exists^{\circ}\hspace{-.3mm}p\hspace{.2mm}}
\newcommand{\Apr}{\forall^{\circ}\!\!\!_{{\cal R}} \hspace{-.08mm}p\hspace{.2mm}}
\newcommand{\seq}{\Rightarrow}
\newcommand{\cald}{{\EuScript D}}
\newcommand{\De}{\Delta}
\newcommand{\Ga}{\Gamma}
\renewcommand{\phi}{\varphi}
\newcommand{\Gthc}{\mathbf{G3cp}}
\newcommand{\GthW}{\mathbf{G3W}}
\newcommand{\GCE}{\mathbf{GCE}}
\newcommand{\GCM}{\mathbf{GCM}}
\newcommand{\GCMC}{\mathbf{GCMC}}
\newcommand{\GCEN}{\mathbf{GCEN}}
\newcommand{\GCMN}{\mathbf{GCMN}}
\newcommand{\GCMCN}{\mathbf{GCMCN}}
\begin{document}
\title{Uniform Lyndon Interpolation for Basic Non-normal Modal and Conditional Logics\footnote{Support by the Netherlands Organisation for Scientific Research under grant 639.073.807 is gratefully acknowledged.}}

\author{Amirhossein Akbar Tabatabai,
Rosalie Iemhoff,
Raheleh Jalali\\
Utrecht University}

\maketitle 

\begin{abstract}
In this paper, a proof-theoretic method to prove uniform Lyndon interpolation for non-normal modal and conditional logics is introduced and applied to show that the logics $\mathsf{E}$, $\mathsf{M}$, $\mathsf{EN}$, $\mathsf{MN}$, $\mathsf{MC}$, $\mathsf{K}$, and their conditional versions, $\mathsf{CE}$, $\mathsf{CM}$, $\mathsf{CEN}$, $\mathsf{CMN}$, $\mathsf{CMC}$, $\mathsf{CK}$, in addition to $\mathsf{CKID}$ have that property. In particular, it implies that these logics have uniform interpolation. Although for some of them the latter is known, the fact that they have uniform Lyndon interpolation is new. Also, the proof-theoretic proofs of these facts are new, as well as the constructive way to explicitly compute the interpolants that they provide. On the negative side, it is shown that the logics $\mathsf{CKCEM}$ and $\mathsf{CKCEMID}$ enjoy uniform interpolation but not uniform Lyndon interpolation. Moreover, it is proved that the non-normal modal logics $\mathsf{EC}$ and $\mathsf{ECN}$ and their conditional versions, $\mathsf{CEC}$ and $\mathsf{CECN}$, do not have Craig interpolation, and whence no uniform (Lyndon) interpolation. 
\\

\noindent \textbf{keywords.} non-normal modal logics, conditional logics, uniform interpolation, uniform Lyndon interpolation, Craig interpolation.
\end{abstract}


\section{Introduction}
Uniform interpolation (UIP), a strengthening of interpolation in which the interpolant only depends on the premise or the conclusion of an implication, is an intriguing logical property. One of the reasons 
is that it is hard to predict which logic does have the property and which does not. Well-behaved logics like {\sf K} and {\sf KD} have it, but then, other well-known modal logics, such as {\sf K4}, do not.  Early results on the subject were by Shavrukov \cite{Shavrukov}, who proved 
UIP for the modal logic  {\sf GL}, and by Ghilardi \cite{Ghilardi} and Visser \cite{Visser}, who independently proved the same for {\sf K}, followed later by B\'ilkov\'a, who showed that {\sf KT} has the property, as well \cite{Bilkova}. Surprisingly, {\sf K4} and {\sf S4} do not have UIP, 
although they do have interpolation \cite{Bilkova,Ghilardi}. Pitts provided the first proof-theoretic proof of UIP, 
for intuitionistic propositional logic, {\sf IPC}, the smallest intermediate logic \cite{Pitts}. Results from \cite{Zawadowski,Maksimova} imply that there are exactly seven intermediate logics with interpolation and that they are exactly the intermediate logics with UIP. 
Pitts' result is especially important to us, as also in our paper the approach is proof-theoretic. \\

In this paper we investigate UIP in the setting of non-normal modal and conditional logics. For these classes of logics, the study of UIP has a more recent history, and the property is less explored than in the setting of normal logics. Before we discuss our own contributions, we briefly describe the logics and the results from the literature that are relevant to our investigation.  \\

Non-normal modal logics are modal logics in which the $K$-axiom, i.e.\ the axiom $\Box(\phi \imp \psi) \imp (\Box\phi \imp \Box\psi)$, does not hold, but a weaker version that is given by the following $E$-rule does:
\begin{center}
 \AxiomC{$\phi \ifff \psi$}
 \UnaryInfC{$\Box\phi \ifff \Box\psi$}
 \DisplayProof
\end{center}
Thus the minimal non-normal modal logic, {\sf E}, is propositional logic plus the $E$-rule above. 
Over the last decades non-normal modal logics have emerged in various fields, such as game theory and epistemic and deontic logic \cite{Chellas}. Two well-known non-normal modal logics that are investigated in this paper are natural weakenings of the principle 
$\Box(\phi \en\psi) \ifff (\Box\phi \en\Box\psi)$ that implies $K$ over {\sf E}. Namely, the two principles: 
\[
 (M) \ \ \ \Box(\phi \en\psi) \imp (\Box\phi \en\Box\psi) 
 \ \ \ \ \ 
 (C) \ \ \ (\Box\phi \en\Box\psi) \imp \Box(\phi \en\psi).  
\]
Because the $K$-axiom holds in the traditional relational semantics for modal logic, non-normal modal logics require different semantics, of which the most well-known is neighborhood semantics. As we do not need semantics in this paper, we refer the interested reader to the textbook \cite{Pacuit}.\\

Conditional logics formalize reasoning about conditional statements, which are statements of the form {\em if $\phi$ were the case, then $\psi$}, denoted as $\phi \triangleright \psi$. This includes formal reasoning about counterfactuals, which are instances of conditional statements in which the antecedent is false. Examples of well-known conditional logics are the logics {\sf CE} and {\sf CK}. Logic {\sf CE} consists of classical propositional logic plus the following {\it CE}-rule, that could be seen as the conditional counterpart of the $E$-rule above (by reading the $\phi_i$ as $\top$):
\begin{center}
 \AxiomC{$\phi_0 \ifff \phi_1$}
 \AxiomC{$\psi_0 \ifff \psi_1$}
 \BinaryInfC{$(\phi_1 \triangleright \psi_1 ) \ifff (\phi_0 \triangleright \psi_0 )$}
 \DisplayProof
\end{center}
Conditional logic {\sf CK} is the result of adding to classical propositional logic the following {\it CK}-rule, which derives the conditional counterpart of $K$: 
\begin{center}
 \AxiomC{$\{\phi_0 \ifff \phi_i\}_{i \in I}$}
 \AxiomC{$\{\psi_i\}_{i \in I} \imp \psi_0$}
 \BinaryInfC{$\{\phi_i \triangleright \psi_i\}_{i \in I} \imp (\phi_0 \triangleright \psi_0 )$}
 \DisplayProof
\end{center}
where $I$ is a possibly empty set. Conditional logics have played an important role in philosophy ever since their introduction by Lewis in 1973 \cite{Lewis}. Another area where they appear is artificial intelligence, in the setting of nonmonotonic reasoning \cite{FriedmanHalpern}. 
The first formal semantics and axiomatization of conditionals appeared in \cite{Stalnaker} and in the years after that the logics were further investigated in \cite{Chellas, Lewis, NuteCross}.\\ 

For several well-known non-normal logics UIP 
has been established, for example, for the monotone logic {\sf M} \cite{Venema}, a result later extended in \cite{Pattinson,Seifan} to other non-normal modal and conditional logics, such as {\sf E} and {\sf CK}. 
Recently the proof theory of conditionals has been explored in \cite{LellmannPattinson,Olivetti,PattinsonSchroeder,SchroederPattinsonHausmann} where sequent calculi and an automated theorem prover for several conditional logics have been developed. 
Particularly relevant for this paper are the results in \cite{Pattinson,PattinsonSchroeder}, where in the former it is shown that the conditional logic {\sf CK} has UIP, and in the latter, sequent-style systems for various conditional logics are provided. \\
Our interest in the property of UIP for non-normal modal and conditional logics lies in the fact that it can be used as a tool in what we would like to call {\em universal proof theory}, the area where one is concerned with the general behavior of proof systems, investigating problems such as the existence problem (when does a theory have a certain type of proof system?) and the equivalence problem (when are two proof systems equivalent?). The value of UIP 
for the existence problem has been addressed in a series of recent papers in which a method is 
developed to prove UIP 
that applies to many intermediate, (intuitionistic) modal, and substructural (modal) logics \cite{Tab,Iemhoff,Iemhoffa}. The proof-theoretic method makes use of 
sequent calculi, and shows that general conditions on the calculi imply UIP 
for the corresponding logic. Thus implying that any logic without UIP 
cannot have a sequent calculus 
satisfying these conditions. The generality of the conditions, such as closure under weakening, makes this into a powerful tool, especially for those classes of logics in which UIP 
is rare, such as intermediate logics. 
Note that in principle other regular properties than UIP 
could be used in this method, as long as the property is sufficiently rare to be of use. \\

In this paper, we do not focus on the connection with the existence problem as just described, but rather aim to show the flexibility and utility 
of our method to prove UIP 
by showing that it can be extended to two other classes  of logics, namely the class of non-normal modal logics and the class of conditional logics. And by furthermore showing that it is constructive and 
can be easily adapted to prove not only UIP 
but even uniform Lyndon interpolation. Uniform Lyndon interpolation (ULIP) is a strengthening of UIP  
in which the interpolant respects the polarity of propositional variables (a definition follows in the next section). It first occurred in \cite{Kurahashi}, where it was shown that several normal modal logics, including $\mathsf{K}$ and $\mathsf{KD}$, have that property. In this paper we show that the non-normal modal logics $\mathsf{E}$, $\mathsf{M}$, $\mathsf{EN}$, $\mathsf{MN}$, $\mathsf{MC}$, $\mathsf{K}$, their conditional versions, $\mathsf{CE}$, $\mathsf{CM}$, $\mathsf{CEN}$, $\mathsf{CMN}$, $\mathsf{CMC}$, $\mathsf{CK}$ in addition to $\mathsf{CKID}$ have uniform Lyndon interpolation and 
the interpolant can be constructed explicitly from the proof. In the last part of this paper we show that the non-normal modal logics $\mathsf{EC}$ and $\mathsf{ECN}$ and the conditional logics $\mathsf{CEC}$ and $\mathsf{CECN}$ do not have Craig interpolation, and whence no uniform (Lyndon) interpolation either. This surprising fact makes $\mathsf{EC}$, $\mathsf{ECN}$, $\mathsf{CEC}$ and $\mathsf{CECN}$ potential candidates for our approach to the existence problem 
discussed above, but that we have to leave for another paper. Interestingly, the conditional logics {\sf CKCEM} and {\sf CKCEMID} have UIP but not ULIP, as we will show at the end of the paper. Some of these results have already been obtained in the literature. 
That the logics $\mathsf{E}$, $\mathsf{M}$, and $\mathsf{MC}$ have UIP has already been established in \cite{Pattinson,Venema}, but that they have uniform Lyndon interpolation is, as far as we know, a new insight. In \cite{Orlandelli} it is shown that $\mathsf{E}$, $\mathsf{M}$, $\mathsf{MC}$, $\mathsf{EN}$, $\mathsf{MN}$ have Craig interpolation. The proof that these logics have UIP is not a mere extension of the proof that they have interpolation but requires a different approach.\\

Our proof-theoretic method to prove UIP makes use of sequent calculi for non-normal modal and conditional logics that are equivalent or equal to calculi introduced in \cite{Orlandelli}. Only for the five logics {\sf CE}, {\sf CM}, {\sf CEN}, {\sf CMN}, and {\sf CMC}, we had to develop cut-free sequent calculi ourselves. 
Interestingly, for logics with Lyndon interpolation (LIP),  
that fact does not always follow easily from the proof that they have Craig interpolation (CIP),  
as is for example the case for {\sf GL} \cite{Shamkanov}. But for our method this indeed is the case: a proof of UIP using this method easily implies ULIP. Thus, the hard work lies in proving the former. Until now such proofs have always been semantical in nature. 
For these reasons, we consider the proof-theoretic method to prove uniform interpolation for non-normal modal and conditional logics the main contribution of this paper. In \cite{Seifan} the search for proof-theoretic techniques to prove uniform interpolation in the setting of non-normal modal logics is explicitly mentioned in the conclusion of that paper. \\

The paper is build up as follows. In Section \ref{Preliminaries}, the logics and their sequent calculi are defined, except for the five logics for which we develop sequent calculi ourselves, which are treated in Section \ref{SequentSystems}. This section also contains the proofs of cut-elimination for the new five calculi. Section \ref{ULIPSection} contains the positive results showing that most logics that we consider have ULIP. It also proves that the logics $\mathsf{CKCEM}$ and $\mathsf{CKCEMID}$ have UIP (but not ULIP). Section \ref{SecNegativeResults} is dedicated to the negative results, showing that the non-normal modal logics $\mathsf{EC}$ and $\mathsf{ECN}$ and the corresponding conditional logics $\mathsf{CEC}$ and $\mathsf{CECN}$ do not have CIP. It also shows that if an extension of $\mathsf{CKCEM}$ enjoys ULIP, then its conditional behaves in a peculiar manner. As a consequence, we conclude with the proof that the logics $\mathsf{CKCEM}$ and $\mathsf{CKCEMID}$ do not have ULIP.

\subsection*{Acknowledgements}
We thank Iris van der Giessen for fruitful discussions on the topic of this paper and three referees for comments that helped improving the paper.

\section{Preliminaries}\label{Preliminaries}
Set $\mathcal{L}_{\Box}=\{\wedge, \vee, \to, \bot, \Box\}$ as the language of modal logics and $\mathcal{L}_{\triangleright}=\{\wedge, \vee, \to, \bot, \triangleright\}$ as the language of conditional logics. We write $\mathcal{L}$ to refer to both of these languages, depending on the context. We use $\top$ and $\neg A$ as abbreviations for $\bot \to \bot$ and $A \to \bot$, respectively, and write $\phi \in \mathcal{L}$ to indicate that $\phi$ is a formula in the language $\mathcal{L}$. The \emph{weight of a formula} is defined inductively as follows. Define $w(\bot)=w(p)=0$, for any atomic formula $p$, $w(A \odot B)=w(A)+w(B)+1$, for any $\odot \in \{\wedge, \vee, \to\}$, and depending on the language $\mathcal{L}$, $w(\Box A)=w(A)+1$ and $w(A \triangleright B)=w(A)+w(B)+1$, for the non-propositional connectives.
\begin{definition}
The sets of positive and negative variables of a formula $\phi \in \mathcal{L}$, denoted respectively by $V^+(\phi)$ and $V^-(\phi)$ are defined recursively by:

\begin{itemize}
\item[$\bullet$]
$V^+(p)=\{p\}$, $V^-(p)=
V^+(\top)=V^-(\top)=V^+(\bot)=V^-(\bot)=\varnothing$, for any atomic  
formula $p$,
\item[$\bullet$]

$V^+(\phi \odot \psi)=V^+(\phi) \cup V^+(\psi)$ and  $V^-(\phi \odot \psi)=V^-(\phi) \cup V^-(\psi)$, for 
$\odot \in \{\wedge, \vee\}$, 
\item[$\bullet$]

$V^+(\phi \to \psi)=V^-(\phi) \cup V^+(\psi)$ and  $V^-(\phi \to \psi)=V^+(\phi) \cup V^-(\psi)$,
\item[$\bullet$]

$V^+(\Box \phi)=V^+(\phi)$  and $V^-(\Box \phi)=V^-(\phi)$, for $\mathcal{L}=\mathcal{L}_{\Box}$.

\item[$\bullet$]

$V^+(\phi \triangleright \psi)=V^-(\phi) \cup V^+(\psi)$ and  $V^-(\phi \triangleright \psi)=V^+(\phi) \cup V^-(\psi)$, for $\mathcal{L}=\mathcal{L}_{\triangleright}$.
\end{itemize}
\noindent Define $V(\phi)$ as $V^+(\phi) \cup V^-(\phi)$. For an atomic formula $p$, a formula $\phi$ is called {\em $p^+$-free} (resp., {\em $p^-$-free}), if $p \notin V^+(\phi)$ (resp., $p \notin V^-(\phi)$). It is called {\em $p$-free} if $p \notin V(\phi)$. Note that a formula is $p$-free iff $p$ does not occur in it.
\end{definition}

For the sake of brevity, when we want to refer to both $V^+(\phi)$ and $V^-(\phi)$, we use the notation $V^{\dagger}(\phi)$ with the condition ``for any $\dagger \in \{+, -\}$". If we 
want to refer to one of $V^+(\phi)$ and $V^-(\phi)$ and its dual, we write $V^{\circ}(\phi)$ for the one we intend 
and $V^{\diamond}(\phi)$\footnote{The label $\diamond$ has nothing to do with the modal operator $\Diamond =\neg\Box\neg$.} for the other one. 
For instance, if we state that for any atomic formula $p$, any $\circ \in \{+, -\}$ and any $p^{\circ}$-free formula $\phi$, there is a $p^{\diamond}$-free formula $\psi$ such that $\phi \vee \psi \in L$, we are actually stating that if $\phi$ is $p^+$-free, there is a $p^-$-free $\psi$ such that $\phi \vee \psi \in L$ and if $\phi$ is $p^-$-free, there is a $p^+$-free $\psi$ such that $\phi \vee \psi \in L$.

\begin{definition} \label{Logic}
A \emph{logic} $L$ is a set of formulas in $\mathcal{L}$ extending the set of classical tautologies, $\mathsf{CPC}$, and closed under substitution and modus ponens $\phi, \phi \to \psi \vdash \psi$.
\end{definition}

\begin{definition} \label{DfnLyndonInterpolation}
A logic $L$ has \emph{Lyndon interpolation property (LIP)} if for any formulas $\phi, \psi \in \mathcal{L}$ such that $L \vdash \phi \to \psi$, there is a formula $\theta \in \mathcal{L}$ such that $V^{\dagger}(\theta) \subseteq V^{\dagger}(\phi) \cap V^{\dagger}(\psi)$, for any $\dagger \in \{+, -\}$ and $L \vdash \phi \to \theta$ and $L \vdash \theta \to \psi$. A logic has \emph{Craig interpolation (CIP)} if it has 
the above properties, omitting all the superscripts $\dagger \in \{+, -\}$. 
\end{definition}

\begin{definition} \label{DfnUniformInterpolation} 
A logic $L$ has \emph{uniform Lyndon interpolation property (ULIP)} if for any formula $\phi \in \mathcal{L}$, 
atom 
$p$, and 
$\circ \in \{+, -\}$, there are 
$p^{\circ}$-free formulas, 
$\Ap \phi$ and $\Ep \phi$, such that $V^{\dagger}(\Ep \phi) \subseteq V^{\dagger}(\phi)$ and $V^{\dagger}(\Ap \phi) \subseteq V^{\dagger}(\phi)$, for any 
$\dagger \in \{+, -\}$ and
\begin{description}
\item[$(i)$]
$L \vdash \Ap \phi \to \phi$,
\item[$(ii)$]
for any $p^{\circ}$-free formula $\psi$ if $L \vdash \psi \to \phi$ then $L \vdash \psi \to \Ap \phi$,
\item[$(iii)$]
$L \vdash \phi \to \Ep \phi$, and
\item[$(iv)$]
for any $p^{\circ}$-free formula $\psi$ if $L \vdash \phi \to \psi$ then $L \vdash \Ep \phi \to \psi$.
\end{description}
A logic has \emph{uniform interpolation property (UIP)} if it has all the above properties, omitting the superscripts $\circ, \dagger \in \{+, -\}$, everywhere.
\end{definition}

\begin{remark}
As the formulas $\Ap \phi$ and $\Ep \phi$ are provably unique, using the functional notation of writing $\Ap \phi$ and $\Ep \phi$ as the functions with the arguments ${\circ} \in \{+, -\}$, $p$ and $\phi$ is allowed.
\end{remark}

\begin{theorem}
If a logic $L$ has ULIP, 
then it has both LIP and UIP.
\end{theorem}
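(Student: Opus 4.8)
The plan is to derive LIP and UIP from ULIP essentially by "forgetting" the polarity information, which is safe because the polarity-sensitive sets $V^{\dagger}$ refine the polarity-free set $V$.

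\textbf{Deriving UIP from ULIP.} Fix a logic $L$ with ULIP, a formula $\phi$, and an atom $p$. Apply ULIP with $\circ = +$ to obtain $p^+$-free formulas $\forall^{\circ}\hspace{-.3mm}p\hspace{.2mm}\phi$ and $\exists^{\circ}\hspace{-.3mm}p\hspace{.2mm}\phi$ satisfying $(i)$--$(iv)$, and separately apply ULIP with $\circ = -$ to obtain $p^-$-free formulas with the analogous properties. The first idea is to combine these into genuinely $p$-free formulas. For the existential quantifier, one sets $\exists p\,\phi := \exists^{+}\hspace{-.3mm}p\hspace{.2mm}(\exists^{-}\hspace{-.3mm}p\hspace{.2mm}\phi)$, i.e.\ first eliminate the negative occurrences of $p$, then eliminate the positive occurrences of $p$ from the result; dually $\forall p\,\phi := \forall^{+}\hspace{-.3mm}p\hspace{.2mm}(\forall^{-}\hspace{-.3mm}p\hspace{.2mm}\phi)$. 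One must check: (a) this formula is $p$-free, which follows since after the first step the formula is $p^-$-free, so in the second step $V^+$ of the output is contained in $V^+$ of the input, which already excludes $p$, hence the output excludes $p$ from both $V^+$ and $V^-$; (b) the variable-containment $V^{\dagger}(\exists p\,\phi) \subseteq V^{\dagger}(\phi)$, which follows by composing the two containments from ULIP; (c) properties $(i)$--$(iv)$ in their superscript-free form, which follow by chaining the corresponding ULIP properties (e.g.\ $L \vdash \phi \to \exists^{-}\hspace{-.3mm}p\hspace{.2mm}\phi \to \exists^{+}\hspace{-.3mm}p\hspace{.2mm}(\exists^{-}\hspace{-.3mm}p\hspace{.2mm}\phi)$ for $(iii)$, and for $(iv)$ one uses that any $p$-free $\psi$ is in particular $p^-$-free and then $p^+$-free). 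Alternatively, and perhaps more simply, one verifies directly that the $p^+$-free $\exists^{+}\hspace{-.3mm}p\hspace{.2mm}\phi$ together with a symmetric argument already yields what is needed — but the composition route is cleanest because the output of the first quantifier is a legitimate input to the second.

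\textbf{Deriving LIP from UIP (hence from ULIP).} This is the standard reduction of uniform interpolation to interpolation, now in its Lyndon-refined form. Suppose $L \vdash \phi \to \psi$. Enumerate the atoms $p_1, \dots, p_n$ occurring in $\phi$ but not in $\psi$, and for each $p_i$ record whether it occurs positively, negatively, or both in $\phi$; using ULIP, successively apply the appropriate $\exists^{\circ}\hspace{-.3mm}p\hspace{.2mm}$ operators to remove, for each such atom, exactly the polarity-occurrences that $\psi$ lacks — concretely, since $p_i$ does not occur in $\psi$ at all, we remove both polarities, so we may set $\theta := \exists p_1 \cdots \exists p_n\, \phi$ using the composite $\exists p_i$ from the previous paragraph. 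By iterating property $(iii)$ we get $L \vdash \phi \to \theta$; by iterating property $(iv)$, noting that $\psi$ is $p_i$-free for each $i$ and that at each stage the remaining target is still $p_i$-free, we get $L \vdash \theta \to \psi$; and by iterating the containment $V^{\dagger}(\exists p_i\,\chi) \subseteq V^{\dagger}(\chi)$ we get $V^{\dagger}(\theta) \subseteq V^{\dagger}(\phi)$, while $p_i \notin V^{\dagger}(\theta)$ for the removed atoms, so $V^{\dagger}(\theta) \subseteq V^{\dagger}(\phi) \cap V^{\dagger}(\psi)$ as required for LIP.

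\textbf{Main obstacle.} The genuine content is bookkeeping rather than any deep argument: the subtlety is to be careful that when one composes two ULIP-steps, the output of the first step really does satisfy the freeness hypothesis required to feed it into the second step and to invoke clauses $(ii)$/$(iv)$ (which quantify over $p^{\circ}$-free $\psi$), and that the variable-set inclusions compose in the right direction. One should also make explicit, via the $V^{\dagger}(\Box\phi) = V^{\dagger}(\phi)$ and $V^{\dagger}(\phi \triangleright \psi)$ clauses of the definition of $V^{\dagger}$, that "occurs in" and "$p$-free" interact as expected, so that removing all polarities of an atom indeed makes it not occur. No step is expected to be hard; the proof is short once the composition $\exists p\,\phi = \exists^{+}\hspace{-.3mm}p\hspace{.2mm}\exists^{-}\hspace{-.3mm}p\hspace{.2mm}\phi$ is set up.
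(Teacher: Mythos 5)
Your derivation of UIP from ULIP via the compositions $\forall p\,\phi=\forall^{+}p\,\forall^{-}p\,\phi$ and $\exists p\,\phi=\exists^{+}p\,\exists^{-}p\,\phi$ is exactly the paper's argument, including the key check that the second quantification does not reintroduce the polarity eliminated by the first.

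The LIP part, however, has a genuine gap. You enumerate only the atoms \emph{occurring in $\phi$ but not in $\psi$} and remove both polarities of each; this yields the Craig-style condition $V(\theta)\subseteq V(\phi)\cap V(\psi)$ but not the Lyndon condition $V^{\dagger}(\theta)\subseteq V^{\dagger}(\phi)\cap V^{\dagger}(\psi)$. The problem is atoms occurring in both $\phi$ and $\psi$ but with mismatched polarities. Take $\phi=p\wedge q$ and $\psi=q\vee\neg p$, so $L\vdash\phi\to\psi$, with $V^{+}(\phi)=\{p,q\}$ and $V^{+}(\psi)=\{q\}$. Since $p$ does occur in $\psi$ (negatively), your enumeration is empty, $\theta=\phi$, and $p\in V^{+}(\theta)\setminus V^{+}(\psi)$, so the Lyndon variable condition fails; your final step ``$p_i\notin V^{\dagger}(\theta)$ for the removed atoms, so $V^{\dagger}(\theta)\subseteq V^{\dagger}(\phi)\cap V^{\dagger}(\psi)$'' is a non sequitur for such atoms. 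Your earlier phrase ``remove exactly the polarity-occurrences that $\psi$ lacks'' is the right idea, but it must be applied per polarity to \emph{all} atoms, not only to those entirely absent from $\psi$. The paper does precisely this: it sets $P^{\dagger}=V^{\dagger}(\phi)-[V^{\dagger}(\phi)\cap V^{\dagger}(\psi)]$ for each $\dagger\in\{+,-\}$ and takes $\theta=\exists^{+}P^{+}\exists^{-}P^{-}\phi$; then $\theta$ is $p^{\dagger}$-free for every $p\in P^{\dagger}$, which together with $V^{\dagger}(\theta)\subseteq V^{\dagger}(\phi)$ gives the required inclusion, and $L\vdash\theta\to\psi$ follows from clause $(iv)$ because $\psi$ is $p^{\dagger}$-free for each $p\in P^{\dagger}$. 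With that correction the rest of your chaining argument goes through.
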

\begin{proof}
For UIP, 
set $\forall p \phi=\forall^+ p \forall^- p \phi$ and $\exists p \phi=\exists^+ p \exists^- p \phi$. We 
only prove the claim for $\forall p \phi$, as the case for $\exists p \phi$ is similar. 
First, it is clear that $V^{\dagger}(\forall p \phi) \subseteq V^{\dagger}(\phi)$, for any $\dagger \in \{+, -\}$. Hence, we have $V(\forall p \phi) \subseteq V(\phi)$. Moreover, $\forall p \phi$ is $p$-free. Because $\forall^- p \phi$ is $p^-$-free by definition and as $V^-(\forall p \phi) \subseteq V^-(\forall^- p\phi)$, the formula $\forall^+ p \forall^- p \phi$ is also $p^-$-free. As $\forall^+ p \forall^- p \phi$ is $p^+$-free by definition, we have $p \notin V(\forall p \phi)=V^+(\forall p \phi) \cup V^-(\forall p \phi)$. For 
condition $(i)$ in Definition \ref{DfnUniformInterpolation}, as $L \vdash \forall^+ p \forall^-p \phi \to \forall^- p \phi$ and $L \vdash \forall^- p \phi \to \phi $, we have $L \vdash \forall p \phi \to \phi$. For 
condition $(ii)$, if $L \vdash \psi \to \phi$, for a $p$-free $\psi$, then $\psi$ is also $p^-$-free and hence $L \vdash \psi \to \forall^- p \phi$. As $\psi$ is also $p^+$-free, we have $L \vdash \psi \to \forall^+ p \forall^- p \phi$.\\
For LIP, 
assume $L \vdash \phi \to \psi$. 
For any $\dagger \in \{+, -\}$, set $P^{\dagger}=V^{\dagger}(\phi)-[V^{\dagger}(\phi) \cap V^{\dagger}(\psi)]$. Define $\theta=\exists^+ P^+ \exists^- P^- \phi$, where by $\exists^{\dagger} \{p_1, \dots, p_n\}^{\dagger}$ we mean $\E p^{\dagger}_1 \dots \E p^{\dagger}_n$. 
Since $\theta$ is $p^{\dagger}$-free for any $p \in P^{\dagger}$ and any $\dagger \in \{+, -\}$, we have $V^{\dagger}(\theta) \subseteq V^{\dagger}(\phi)-P^{\dagger} \subseteq V^{\dagger}(\phi) \cap V^{\dagger}(\psi)$. 
For the provability condition, it is clear that $L \vdash \phi \to \theta$ and as $\psi$ is $p^{\dagger}$-free for any $p \in P^{\dagger}$, we have $L \vdash \theta \to \psi$.
\end{proof}

\subsection{Non-normal modal and conditional logics}
In this subsection, we recall the basic non-normal modal and conditional logics via their Hilbert-style presentations. Let us start with non-normal modal logics. The logic $\mathsf{E}$ is defined as the smallest set of formulas in $\mathcal{L}_{\Box}$ containing all classical tautologies and closed under the following two rules:
\begin{center}
    \begin{tabular}{c c}
        \AxiomC{$\phi$} 
 \AxiomC{$\phi \to \psi$} 
   \RightLabel{\footnotesize$MP$} 
 \BinaryInfC{$\psi$}
 \DisplayProof
 & \quad
 \AxiomC{$\phi \leftrightarrow \psi$}
 \RightLabel{\footnotesize$E$} 
 \UnaryInfC{$\Box \phi \leftrightarrow \Box \psi$}
 \DisplayProof
    \end{tabular}
\end{center}
We take $\mathsf{E}$ as the base logic and define the other non-normal modal logics as depicted in Figure \ref{figModalHilbert}, by adding the following modal axioms:
\begin{center}
\begin{tabular}{c c c}
$\Box (\phi \wedge \psi) \to \Box \phi \wedge \Box \psi$ \, \footnotesize(M)
 &
 $\Box \phi \wedge \Box \psi \to \Box (\phi \wedge \psi) $ \, \footnotesize(C)
  &
 $\Box \top$ \, \footnotesize(N)
\end{tabular}
\end{center}
    \begin{figure}[H]
\begin{center}
    \begin{tabular}{c c}
       $\mathsf{EN}= \mathsf{E}+ (N)$
 & \quad
 $\mathsf{M}= \mathsf{E}+ (M)$ 
    \end{tabular}
\end{center}
\begin{center}
    \begin{tabular}{c c}
       $\mathsf{MN}= \mathsf{M}+ (N)$
 & \quad
 $\mathsf{MC}= \mathsf{M}+ (C)$ 
    \end{tabular}
\end{center}
\begin{center}
    \begin{tabular}{c c}
       $\mathsf{K}= \mathsf{MC}+ (N)$
 & \quad
 $\mathsf{EC}= \mathsf{E}+ (C)$ 
    \end{tabular}
\end{center}
\begin{center}
    \begin{tabular}{c c}
       $\mathsf{ECN}= \mathsf{EC}+ (N)$ 
    \end{tabular}
\end{center}
\caption{Non-normal modal logics}
    \label{figModalHilbert}
\end{figure}
Similarly, for conditional logics, define $\mathsf{CE}$ as the smallest set of formulas in $\mathcal{L}_{\triangleright}$ containing all classical tautologies and closed under the following two rules: 
\begin{center}
    \begin{tabular}{c c}
        \AxiomC{$\phi$} 
 \AxiomC{$\phi \to \psi$} 
   \RightLabel{\footnotesize$MP$} 
 \BinaryInfC{$\psi$}
 \DisplayProof
 & \quad
 \AxiomC{$\phi_0 \leftrightarrow \phi_1$} 
 \AxiomC{$\psi_0 \leftrightarrow \psi_1$} 
   \RightLabel{\footnotesize$CE$} 
 \BinaryInfC{$\phi_0 \triangleright \psi_0 \to \phi_1 \triangleright \psi_1$}
 \DisplayProof
    \end{tabular}
\end{center}
The other conditional logics are defined as depicted in Figure \ref{figConditionalLogics}, by adding the following conditional axioms to $\mathsf{CE}$:\\

\begin{center}
$(\phi \triangleright \psi \wedge \theta) \to (\phi \triangleright \psi) \wedge (\phi \triangleright \theta)$ \quad \footnotesize(CM)
\end{center}
\begin{center}
    $(\phi \triangleright \psi) \wedge (\phi \triangleright \theta) \to (\phi \triangleright \psi \wedge \theta) $ \quad \footnotesize(CC)
\end{center}
\begin{center}
\begin{tabular}{c c c}
 $\phi \triangleright \top$\;  \footnotesize(CN)
 &
  $(\phi \triangleright \psi) \vee (\phi \triangleright \neg \psi)$ \; \footnotesize(CEM)
  &
 $\phi \triangleright \phi$ \; \footnotesize(ID)
\end{tabular}
\end{center}
\begin{figure}[H]
\begin{center}
    \begin{tabular}{c c}
       $\mathsf{CEN}= \mathsf{CE}+ (CN)$ 
 & \quad
 $\mathsf{CM}= \mathsf{CE}+ (CM)$ 
    \end{tabular}
\end{center}
\begin{center}
    \begin{tabular}{c c}
       $\mathsf{CMN}= \mathsf{CM}+ (CN)$ 
 & \quad
 $\mathsf{CMC}= \mathsf{CM}+ (CC)$ 
    \end{tabular}
\end{center}
\begin{center}
    \begin{tabular}{c c}
       $\mathsf{CK}= \mathsf{CMC}+ (CN)$ 
 & \quad
 $\mathsf{CEC}= \mathsf{CE}+ (CC)$ 
    \end{tabular}
\end{center}
\begin{center}
    \begin{tabular}{c c}
       $\mathsf{CECN}= \mathsf{CEC}+ (CN)$ 
 & \quad
 $\mathsf{CKID}= \mathsf{CK}+ (ID)$ 
    \end{tabular}
\end{center}
\begin{center}
    \begin{tabular}{c c}
       $\mathsf{CKCEM}= \mathsf{CK}+ (CEM)$ 
 & \quad
 $\mathsf{CKCEMID}= \mathsf{CKCEM}+ (ID)$ 
    \end{tabular}
\end{center}
\caption{Some conditional logics}
\label{figConditionalLogics}
\end{figure}

\subsection{Sequent calculi}
We use capital Greek letters and the bar notation in $\bar{\phi}$ and $\bar{C}$ to denote multisets. 
A \emph{sequent} 
is an expression in the form $\Gamma \Rightarrow \Delta$, where $\Gamma$ (the \emph{antecedent}) and $\Delta$ (the \emph{succedent}) are multisets of formulas. It is interpreted as $\bigwedge\Gamma \imp \bigvee \Delta$. 
For sequents $S=(\Gamma \Rightarrow \Delta)$ and $T=(\Pi \Rightarrow \Lambda)$ 
we denote the sequent $\Gamma, \Pi \Rightarrow \Delta, \Lambda$ by $S \cdot T$, and the multisets $\Gamma$ and $\Delta$ by $S^a$ and $S^s$, respectively. Define $V^+(S)=V^-(S^a) \cup V^+(S^s)$ and $V^-(S)=V^+(S^a) \cup V^-(S^s)$ and the \emph{weight of a sequent} as the sum of the weights of the formulas occurring in that sequent. 
A sequent $S$ is \emph{lower than} a sequent $T$, if the weight of $S$ is less than the weight of $T$.\\

\noindent In this paper we are interested in modal and conditional extensions of the well-known sequent calculus $\Gthc$ from \cite{Troelstra} 
(Figure~\ref{figgthc}) for classical logic $\mathsf{CPC}$:
\begin{figure}[H]
\begin{center}
 \begin{tabular}{cc}
 \AxiomC{}
  \RightLabel{\footnotesize$ Ax$}
 \UnaryInfC{$\Gamma, p \Rightarrow p, \Delta$}
 \DisplayProof
 &
 \AxiomC{}
 \RightLabel{\footnotesize$L \bot$}
 \UnaryInfC{$\Gamma, \bot \Rightarrow, \Delta$}
 \DisplayProof
 \\[3ex]
 \AxiomC{$\Gamma, \phi, \psi \Rightarrow \Delta$}
 \RightLabel{\footnotesize$L \wedge$} 
 \UnaryInfC{$\Gamma, \phi \wedge \psi \Rightarrow \Delta$}
 \DisplayProof
 &
 \AxiomC{$\Gamma \Rightarrow \phi, \Delta$}
 \AxiomC{$\Gamma \Rightarrow \psi, \Delta$}
 \RightLabel{\footnotesize$R \wedge$} 
 \BinaryInfC{$\Gamma \Rightarrow \phi \wedge \psi, \Delta$}
 \DisplayProof
  \\[3ex]
 \AxiomC{$\Gamma, \phi \Rightarrow \Delta$}
 \AxiomC{$\Gamma, \psi \Rightarrow \Delta$}
 \RightLabel{\footnotesize$L \vee$} 
 \BinaryInfC{$\Gamma, \phi \vee \psi \Rightarrow \Delta$}
 \DisplayProof
 &
 \AxiomC{$\Gamma \Rightarrow \phi , \psi, \Delta$}
 \RightLabel{\footnotesize$R \vee$} 
 \UnaryInfC{$\Gamma \Rightarrow \phi \vee \psi, \Delta$}
 \DisplayProof
 \\[3ex]
 \AxiomC{$\Gamma \Rightarrow \phi, \Delta$}
 \AxiomC{$\Gamma, \psi \Rightarrow \Delta$}
 \RightLabel{\footnotesize$L \to$} 
 \BinaryInfC{$\Gamma, \phi \to \psi \Rightarrow \Delta$}
 \DisplayProof
 &
 \AxiomC{$\Gamma, \phi \Rightarrow \psi, \Delta$}
 \RightLabel{\footnotesize$R \to$} 
 \UnaryInfC{$\Gamma \Rightarrow \phi \to \psi, \Delta$}
 \DisplayProof
\end{tabular}
\caption{The sequent calculus $\Gthc$. In \textit{Ax}, the formula $p$ must be atomic.}
\label{figgthc}
\end{center}
\end{figure}
\noindent We also use the following two explicit weakening rules:  
\begin{center}
\begin{tabular}{c c}
\AxiomC{$\Gamma \Rightarrow \Delta$}
\RightLabel{\footnotesize$Lw$} 
\UnaryInfC{$\Gamma, \phi \Rightarrow \Delta$}
\DisplayProof
 & \quad
 \AxiomC{$\Gamma \Rightarrow \Delta$}
 \RightLabel{\footnotesize$Rw$} 
 \UnaryInfC{$\Gamma \Rightarrow \phi, \Delta$}
 \DisplayProof
\end{tabular}
\end{center}
The system $\mathbf{G3cp}$ plus the weakening rules is denoted by $\GthW$. In each rule in $\GthW$, the multiset $\Gamma$ (resp., $\Delta$) is called the left (resp., right) \textit{context}, the formulas outside $\Gamma \cup \Delta$ are called the \emph{active formulas} of the rule and the only formula in the conclusion outside
$\Gamma \cup \Delta$ is called the \emph{main formula}. If $S$ is the conclusion of an instance of a rule $R$, we say that $R$ is {\em backward applicable} to $S$. \\

The modal rules by which we extend $\Gthc$ or $\GthW$ are given in Figure~\ref{figmodal}. For any such rule $(X)$ except $(EC)$, $(N)$ and $(NW)$, if we add it to $\GthW$ we denote the resulting system by $\mathbf{GX}$, and if we add $(N)$ to that system we get $\mathbf{GXN}$.
\begin{figure}[H]
\begin{center}
\begin{tabular}{c c c}
\def\defaultHypSeparation{\hskip .05in}
\AxiomC{$ \phi \Rightarrow \psi$} 
  \AxiomC{$ \psi \Rightarrow \phi$} 
   \RightLabel{\footnotesize$E$} 
 \BinaryInfC{$ \Box \phi \Rightarrow \Box \psi$}
 \DisplayProof
 &
 \def\defaultHypSeparation{\hskip .05in}
 \AxiomC{$\phi_1, \cdots , \phi_n \Rightarrow \psi$} 
  \AxiomC{$\psi \Rightarrow \phi_1 \;\;\;\;\; \cdots$}  
    \AxiomC{$\psi \Rightarrow \phi_n$} 
   \RightLabel{\footnotesize$EC$} 
 \TrinaryInfC{$\Sigma, \Box \phi_1, \cdots , \Box \phi_n \Rightarrow \Box \psi, \Lambda$}
 \DisplayProof
\end{tabular}
\end{center}
\begin{center}
\begin{tabular}{c c c c}
 \AxiomC{$ \phi \Rightarrow \psi$} 
  \RightLabel{\footnotesize$M$} 
 \UnaryInfC{$ \Box \phi \Rightarrow \Box \psi$}
 \DisplayProof
 &
  \AxiomC{$ \Rightarrow \psi$} 
   \RightLabel{\footnotesize$N$} 
 \UnaryInfC{$ \Rightarrow \Box \psi$}
 \DisplayProof
 &
  \AxiomC{$ \Rightarrow \psi$} 
   \RightLabel{\footnotesize$NW$} 
 \UnaryInfC{$\Sigma \Rightarrow \Box \psi, \Lambda$}
 \DisplayProof
 &
 \AxiomC{$\phi_1, \cdots, \phi_n \Rightarrow \psi$} 
   \RightLabel{\footnotesize$MC$} 
 \UnaryInfC{$\Box \phi_1, \cdots, \Box \phi_n \Rightarrow \Box \psi$}
 \DisplayProof
\end{tabular}
\end{center}
\caption{The modal rules. In $(EC)$ and $(MC)$, we must have $n \geq 1$.}
\label{figmodal}
\end{figure}
\noindent Note that $\mathbf{GMCN}$ is the usual system for the 
logic $\mathsf{K}$. 
If we add 
$(EC)$ to $\Gthc$, we get $\mathbf{GEC}$ and if we also add the rule $(NW)$, we get 
$\mathbf{GECN}$. Note that 
$\mathbf{GEC}$ and $\mathbf{GECN}$ have no explicit weakening rules.\linebreak
The systems $\mathbf{GEC}$ and $\mathbf{GECN}$ are introduced in \cite{Orlandelli}. The others are equivalent to the systems introduced in \cite{Orlandelli}. 
The only difference is that in our presentation, the weakening rules are explicitly present, while the extra context in the conclusion of the modal rules are omitted. We will present the systems as such for convenience in our later proofs. As the systems $\mathbf{GE}$, $\mathbf{GM}$, $\mathbf{GMC}$, $\mathbf{GEN}$ and $\mathbf{GMN}$ are equivalent to the systems introduced in \cite{Orlandelli}, they all admit the cut rule and the contraction rules. Moreover, the logics of these systems, i.e., the sets of formulas $\phi$ for which the systems prove $(\, \Rightarrow \phi)$ are the basic non-normal modal logics $\mathsf{E}$, $\mathsf{M}$, $\mathsf{MC}$, $\mathsf{EN}$ and $\mathsf{MN}$, respectively, introduced in the previous subsection. The logics of the systems $\mathbf{GEC}$ and $\mathbf{GECN}$ are the logics $\mathsf{EC}$ and $\mathsf{ECN}$, respectively \cite{Orlandelli}. \\

For the conditional logics $\mathsf{CE}, \mathsf{CM}, \mathsf{CMC}, \mathsf{CEN}$, and $\mathsf{CMN}$, their sequent calculi have not been studied before, as far as we know. In Section \ref{SequentSystems}, we introduce their corresponding sequent calculi, i.e., $\GCE$, $\GCM$, $\GCMC$,
$\GCEN$, and $\GCMN$ and prove that they enjoy the cut elimination theorem. However, the sequent calculi for the conditional logics $\mathsf{CK}$, $\mathsf{CKID}, \mathsf{CKCEM},$ and $\mathsf{CKCEMID}$ have already been studied in \cite{PattinsonSchroeder}. For the first, we reintroduce the system as $\mathbf{GCK}$ in Section \ref{SequentSystems}. For the others, we provide the rules depicted in Figure \ref{figconditional} that are equivalent to the ones introduced in \cite{PattinsonSchroeder}. The difference is that in our presentation, we use two-sided sequent calculi while the calculi used in \cite{PattinsonSchroeder} are  one-sided. Moreover, the weakening rules are explicitly present in our systems. Adding each of these rules to $\GthW$ will result in the sequent calculi $\mathbf{GCKID}$, $\mathbf{GCKCEM}$, and $\mathbf{GCKCEMID}$, respectively.
\begin{figure}[H]
    \begin{center}
\begin{tabular}{c}
\AxiomC{$\{\phi_0 \Rightarrow \phi_i \;\;\; , \;\;\; \phi_i \Rightarrow \phi_0\}_{i \in I}$} 
 \AxiomC{$\phi_0, \{\psi_i \}_{i \in I} \Rightarrow \psi_0$} 
   \RightLabel{\footnotesize$CKID$} 
 \BinaryInfC{$\{\phi_i \triangleright \psi_i\}_{i \in I} \Rightarrow \phi_0 \triangleright \psi_0$}
 \DisplayProof
\end{tabular}
\end{center}
\begin{center}
\begin{tabular}{c}
\AxiomC{$\{\phi_0 \Rightarrow \phi_r \;\;\; , \;\;\; \phi_r \Rightarrow \phi_0\}_{r \in I \cup J}$} 
 \AxiomC{$\{\psi_i\}_{i \in I} \Rightarrow \psi_{0}, \{\psi_j\}_{j \in J}$} 
   \RightLabel{\footnotesize$CKCEM$} 
 \BinaryInfC{$\{\phi_i \triangleright \psi_i\}_{i \in I} \Rightarrow \phi_{0} \triangleright \psi_{0}, \{\phi_j \triangleright \psi_j\}_{j \in J}$}
 \DisplayProof
\end{tabular}
\end{center}
\begin{center}
\begin{tabular}{c}
\AxiomC{$\{\phi_0 \Rightarrow \phi_r \;\;\; , \;\;\; \phi_r \Rightarrow \phi_0\}_{r \in I \cup J}$} 
 \AxiomC{$\phi_0, \{\psi_i\}_{i \in I} \Rightarrow  \psi_{0}, \{\psi_j\}_{j \in J}$} 
   \RightLabel{\footnotesize$CKCEMID$} 
 \BinaryInfC{$\{\phi_i \triangleright \psi_i\}_{i \in I} \Rightarrow \phi_{0} \triangleright \psi_{0}, \{\phi_j \triangleright \psi_j\}_{j \in J}$}
 \DisplayProof
\end{tabular}
\end{center}
    \caption{Conditional rules I}
    \label{figconditional}
\end{figure}

Here are some remarks about the rules introduced above. First, for any rule the weight of each premise is less than the weight of its conclusion. For the propositional rules or the weakening rule the claim is clear. For the modal and conditional rules, note that the weight of the sequent $ \Gamma, \Sigma \Rightarrow \Delta, \Lambda$ is less than the weight of $\Box \Gamma, \Sigma \Rightarrow \Box \Delta, \Lambda$ as long as $\Gamma \cup \Delta$ is non-empty. Similarly, for any finite multisets $I$ and $J$ and for any $M \subseteq I$ and $N \subseteq J$, it is clear that the weight of any subsequent of the sequent 
\[
\{\phi_i, \psi_i\}_{i \in M}, \{\theta_j, \chi_j\}_{j \in N},  \Sigma \Rightarrow \{ \phi_{i}, \psi_{i}\}_{i \in I-M}, \{\theta_j, \chi_j\}_{j \in J-N}, \Lambda
\]
is less than the weight of the sequent
$\{ \phi_{i} \, \triangleright \, \psi_i\}_{i \in I}, \Sigma \Rightarrow \{\theta_j \, \triangleright \, \chi_j\}_{j \in J}, \Lambda$, as long as $I \cup J$ is non-empty. Secondly, note that 
in any rule in $\GthW$, if we add a multiset, both to the antecedent (succedent) 
of the premises and to the antecedent (succedent) 
of the conclusion, the result remains an instance of the 
rule. 
We call this property the \emph{context extension property}. 
Conversely, if a multiset is a sub-multiset of the left (right) context of the rule, then if we eliminate this multiset both from the premises and the conclusion, the result remains an instance of the
rule. 
We call this property the \emph{context restriction property}. 
Thirdly, 
for any rule in $\GthW$ and any $\circ \in \{+, -\}$, if the main formula $\phi$ is in the antecedent, 
then for any active formula $\alpha$ in the 
antecedent of a premise and any active formula $\beta$ 
in the succedent of a premise, we have $V^{\circ}(\alpha) \cup V^{\diamond}(\beta) \subseteq V^{\circ}(\phi)$, and if 
$\phi$ is in the succedent, we have $V^{\diamond}(\alpha) \cup V^{\circ}(\beta) \subseteq V^{\circ}(\phi)$ (note the use of $\circ$ and $\diamond$). We call this property, the \emph{variable preserving property}. 
As a consequence of this property 
for the rule
\begin{center}
\begin{tabular}{c c c}
 \AxiomC{$S_1 \; \cdots \; S_n$}  
 \UnaryInfC{$S$}
 \DisplayProof
\end{tabular}
\end{center}
in $\GthW$, we have $\bigcup_{i=1}^n V^{\circ}(S_i) \subseteq V^{\circ}(S)$, for any $\circ \in \{+, -\}$.

\section{Sequent Calculi for Non-normal Conditional Logics} \label{SequentSystems}
In this section, we introduce sequent calculi for the six conditional logics $\mathsf{CE}, \mathsf{CM}, \mathsf{CMC}, \mathsf{CEN}, \mathsf{CMN},$ and $\mathsf{CK}$. Then, we prove that in all of these six calculi the cut rule is admissible and conclude that the presented systems are sound and complete for their corresponding logics. The system for $\mathsf{CK}$ and its cut elimination proof is not new and introduced before in \cite{PattinsonSchroeder}. However, we include $\mathsf{CK}$ for the completeness' sake.\\

\noindent To present the systems, consider the following set of conditional rules:

\begin{figure}[H]
\begin{center}
\begin{tabular}{c c c}
\AxiomC{$\phi_0 \Rightarrow \phi_1 \;\;\;  \;\;\; \phi_1 \Rightarrow \phi_0$} 
 \AxiomC{$\psi_0 \Rightarrow \psi_1 \;\;\;  \;\;\; \psi_1 \Rightarrow \psi_0$} 
   \RightLabel{\footnotesize$CE$} 
 \BinaryInfC{$\phi_1 \triangleright \psi_1 \Rightarrow \phi_0 \triangleright \psi_0$}
 \DisplayProof
\end{tabular}
\end{center}

\begin{center}
\begin{tabular}{c c c}
\AxiomC{$\phi_0 \Rightarrow \phi_1 \;\;\;  \;\;\; \phi_1 \Rightarrow \phi_0$} 
 \AxiomC{$\psi_1 \Rightarrow \psi_0$} 
   \RightLabel{\footnotesize$CM$} 
 \BinaryInfC{$\phi_1 \triangleright \psi_1 \Rightarrow \phi_0 \triangleright \psi_0$}
 \DisplayProof
\end{tabular}
\end{center}

\begin{center}
\begin{tabular}{c c c}
\AxiomC{$\{\phi_0 \Rightarrow \phi_i \;\;\; , \;\;\; \phi_i \Rightarrow \phi_0\}_{1 \leq i \leq n}$} 
 \AxiomC{$\psi_1, \cdots, \psi_n \Rightarrow \psi_0$} 
   \RightLabel{\footnotesize$CMC$ $(n \geq 1)$} 
 \BinaryInfC{$\phi_1 \triangleright \psi_1, \cdots, \phi_n \triangleright \psi_n \Rightarrow \phi_0 \triangleright \psi_0$}
 \DisplayProof
\end{tabular}
\end{center}

\begin{center}
\begin{tabular}{c c c}
 \AxiomC{$ \Rightarrow \psi_0$} 
   \RightLabel{\footnotesize$CN$} 
 \UnaryInfC{$ \Rightarrow \phi_0 \triangleright \psi_0$}
 \DisplayProof
\end{tabular}
\end{center}
    \caption{Conditional rules II}
    \label{figconditionalII}
\end{figure}
For any rule $(X)$, from Figure~\ref{figconditionalII}, except $(CN)$, let  $\mathbf{GX}$ denote the calculus $\GthW$ extended by $(X)$. If $(CN)$ is added to $\mathbf{GX}$, the resulting calculus is denoted by $\mathbf{GXN}$. Thus, we have defined six sequent calculi, $\GCE$, $\GCM$, $\GCMC$, $\GCEN$, $\GCMN$, and $\GCMCN$, where the last is the system introduced in \cite{PattinsonSchroeder} for $\mathsf{CK}$. We denote this system by $\mathbf{GCK}$. Our next task is to show that in all of these systems, the cut rule is admissible. \\ 

Let $G$ be any of the six aforementioned calculi. We show that the cut rule is admissible in $G$ by induction on the weight and a subinduction on the level of cuts in a proof. The weight of an application of the cut rule is the weight of the cut formula. The {\em level} of a cut in a derivation is the sum of the heights of its two premisses, where 
the {\em height} of a derivation is the length of its longest branch, where branches consisting of one node are considered to have height 1. 
The following lemma records the two admissible rules that the cut elimination process usually needs:
\begin{lemma} \label{AdmissibleRules} The following rules are admissible in $G$: 
\begin{description}
\item[$(i)$]
Contraction, i.e., 
\begin{eqnarray*}
 G \vdash \Ga,\phi,\phi \seq \De & \text{ implies } &G \vdash \Ga,\phi \seq \De \\
 G \vdash \Ga \seq \phi,\phi,\De & \text{ implies } & G \vdash \Ga \seq \phi, \De.
\end{eqnarray*}
\item[$(ii)$]
$\bot$-elimination, i.e., 
\begin{eqnarray*}
 G \vdash \Ga \seq \bot, \De & \text{ implies } & G \vdash \Ga \seq \De \\
\end{eqnarray*}
\end{description}
\end{lemma}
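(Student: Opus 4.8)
The plan is to prove both admissibility statements simultaneously by induction on the height of the given derivation, as is standard for $\mathbf{G3}$-style calculi, exploiting the fact that all rules of $G$ (being $\GthW$ plus a single conditional rule) enjoy the context extension and context restriction properties noted in the Preliminaries, and that every premise has strictly lower weight than its conclusion (the latter is not actually needed here but the structural features are).

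\textbf{$\bot$-elimination.} I would argue by induction on the height of a derivation of $\Ga \seq \bot, \De$. If the last rule is $Ax$ or $L\bot$, the principal atom or the $\bot$ in the antecedent is still present after deleting $\bot$ from the succedent, so the same axiom applies. If $\bot$ in the succedent is the main formula of the last rule, this is impossible: no right rule of $\GthW$ or of any of the six conditional rules has $\bot$ as its main formula (the only rule introducing $\bot$ on the right is $Rw$, and there deleting the main formula just gives the premise, so we are done without even invoking the induction hypothesis). Otherwise $\bot$ sits in the right context of the last rule; by the context restriction property we may delete it from the premises, apply the induction hypothesis to each premise, and re-apply the rule. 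The one point of care is $Rw$: if its main formula is the displayed $\bot$ we return the premise; if it is some other formula, $\bot$ is in the context and the general argument applies.

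\textbf{Contraction.} I would prove the two contraction statements together by induction on height. The interesting case is when one of the two copies of the contracted formula is the main formula of the last rule applied. If the last rule is $Ax$ or $L\bot$ the conclusion of the contracted sequent is still an instance of the same axiom. For a one-premise logical rule (e.g. $L\wedge$, $R\vee$, $R\to$) whose main formula is one of the two copies: the other copy sits in the context of the premise, so after applying the rule's inversion-free reasoning — i.e. using the induction hypothesis to contract the duplicated active/context formulas in the premise, where needed invoking invertibility of $\GthW$-rules (which holds in $\mathbf{G3cp}$ with weakening) or more simply height-preserving admissibility of the relevant structural steps — we recover the desired contracted conclusion. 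For two-premise rules ($R\wedge$, $L\vee$, $L\to$) one contracts in each premise by the induction hypothesis and re-applies the rule. When the contracted formula is $\phi_0 \triangleright \psi_0$ on the right of a $CE/CM/CMC/CK$ application: since these rules have \emph{empty} right context and a single conditional main formula on the right, two copies of $\phi_0\triangleright\psi_0$ on the right cannot both arise from one rule application, so at least one copy is itself the main formula and the other must have been introduced earlier; tracing it up (it can only enter via $Rw$ or be the main formula of an earlier conditional rule) lets us eliminate the duplicate, possibly using the admissibility of $Rw$-permutations. Symmetrically, duplicated conditional formulas in the antecedent of, say, $CMC$ or $CK$ are handled by the context restriction property together with contracting the correspondingly duplicated $\psi_i$'s in the right premise via the induction hypothesis.

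\textbf{Main obstacle.} The delicate part is contraction of conditional formulas that are principal in one of the modal/conditional rules — in particular $CMC$ and $CK$, where several boxed/conditional antecedent formulas interact. Here a naive "contract in the premise and re-apply" fails because the premises of these rules are not literal sub-sequents containing the duplicated formula; instead the duplication in the conclusion corresponds to a duplication of an \emph{active} formula (a $\psi_i$, or a $\phi_i$-equivalence pair) in a premise, which must be contracted there by the induction hypothesis before the rule is re-applied. Making this correspondence precise, and checking that in the $CKCEM$-style cases with non-empty right context on the conditional side the same bookkeeping goes through, is where the real work lies; the propositional cases are entirely routine and I would dispatch them briefly.
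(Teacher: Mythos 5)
Your proposal is correct and follows essentially the same route as the paper, which simply states that the lemma holds by a routine induction on the length of the proof as in Troelstra--Schwichtenberg and omits all details. The only remark worth making is that your worry about two copies of a conditional formula in the succedent of a $CE$/$CM$/$CMC$ inference is vacuous --- those rules have a singleton succedent and empty right context, so that case cannot arise and no ``tracing up'' is needed --- while your treatment of duplicated conditional formulas in the antecedent of $CMC$/$CK$ (contract the corresponding duplicated $\psi_i$'s in the premise by the induction hypothesis, then re-apply the rule) is exactly the intended argument.
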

\begin{proof}
The proof uses a simple induction on the proof length as explained in \cite{Troelstra} and is quite routine. Hence, we will skip the details here.
\end{proof}

\begin{theorem}({\it Cut-elimination}) \label{thmcutadmll}
The following cut rule is admissible in $G$.
\begin{center}
 \AxiomC{$\Ga_1 \seq \phi,\De_1$}
 \AxiomC{$\phi,\Ga_2 \seq \De_2$}
 \RightLabel{\footnotesize \it Cut}
 \BinaryInfC{$\Ga_1, \Ga_2 \seq \De_1,\De_2$}
 \DisplayProof
\end{center}
\end{theorem}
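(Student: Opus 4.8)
The plan is to prove cut-elimination by the standard double induction already announced in the text: a principal induction on the weight of the cut formula $\phi$, and a subinduction on the level of the cut (the sum of the heights of the two premises). So I would assume that any cut of smaller weight is admissible, and that any cut of the same weight but smaller level is admissible, and then show how to transform a single topmost cut into cuts that are either of smaller weight or of the same weight but smaller level. The standard bookkeeping shows this suffices: replacing a cut by one or several such cuts terminates, and using Lemma~\ref{AdmissibleRules}(i) (admissibility of contraction) to handle the duplication of contexts that occurs in the multiplicative rules, and Lemma~\ref{AdmissibleRules}(ii) when $\phi = \bot$.

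The case analysis on the last rules applied to the two premises $\Ga_1 \seq \phi, \De_1$ and $\phi, \Ga_2 \seq \De_2$ splits as usual into three groups. First, if $\phi$ is not principal in the last rule of one of the premises (including the axiom cases and $L\bot$, and the weakening rules $Lw$, $Rw$ when $\phi$ is the weakened formula — where the cut disappears outright), we permute the cut upward past that rule; this strictly decreases the level while keeping the weight fixed, and one invokes the induction hypothesis on each resulting cut, after possibly restoring contexts via the context extension property. The only subtlety here is the genuinely multiplicative rules such as $R\wedge$, $L\vee$, $L\to$, $CE$, $CM$, $CMC$: permuting the cut upward through one of them produces two copies of the other premise's context, which one then contracts back using admissible contraction. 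Second, the principal-principal case: $\phi$ is principal on both sides. For a propositional connective this is the classical reduction of $\Gthc$, turning one cut on $\phi = A \odot B$ into cuts on the strictly smaller subformulas $A$ and $B$, so weight decreases. The genuinely new cases are when $\phi = \psi_0 \triangleright \psi_1$ (or $\phi = \phi_0 \triangleright \psi_0$, matching the notation of Figure~\ref{figconditionalII}) is principal on both sides via one of the conditional rules $CE$, $CM$, $CMC$, $CN$.

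I expect the conditional principal case to be the main obstacle, so I would treat it carefully for each calculus. Take $\GCMC$ as the representative: on the left, $\phi = \phi_0 \triangleright \psi_0$ is introduced by $CMC$ from premises $\{\phi_0 \Rightarrow \phi_i, \ \phi_i \Rightarrow \phi_0\}_{1\le i\le n}$ and $\psi_1,\dots,\psi_n \Rightarrow \psi_0$ (the conditionals $\phi_i \triangleright \psi_i$ sitting in $\Ga_1$); on the right, some occurrence of $\phi_0 \triangleright \psi_0$ is among the principal conditionals of another $CMC$ (or $CE$, $CM$, $CN$) application. One then builds a derivation of the conclusion by cutting together the antecedent/succedent equivalences $\phi_0 \Leftrightarrow \phi_j$ coming from the two sides — these cuts are all on formulas of strictly smaller weight, namely subformulas of $\phi_0$, $\psi_0$ — and one cut on $\psi_0$ itself, which again has strictly smaller weight than $\phi_0 \triangleright \psi_0$. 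Assembling these into a single application of the appropriate conditional rule, after some contractions, yields the desired sequent. The $CN$-principal case is easier since the left premise of $CN$ is empty on the antecedent side, but one must check it interacts correctly with the other rules when combined into $\GCMCN$ etc.; and in $\GCE$ and $\GCM$ one must also verify that a conditional formula occurring in a left context of $CE$/$CM$ (not as the main formula) is handled by the context-permutation case rather than the principal one. Once every calculus $G$ among the six has all its conditional rules checked in this way, the induction closes.
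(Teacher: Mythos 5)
Your proposal is correct and follows essentially the same route as the paper: double induction on the weight of the cut formula with a subinduction on the level, reliance on admissible contraction and $\bot$-elimination from Lemma~\ref{AdmissibleRules}, reduction of the principal--principal conditional cases (e.g.\ $CMC$ against $CMC$) to lower-weight cuts on the antecedent equivalences $\phi_0 \Leftrightarrow \phi_i$ and on $\psi_0$ followed by a single reapplication of the conditional rule, and the observation that $(CN)$ cannot conclude the right premise of a cut because its conclusion has an empty antecedent. The only cosmetic remark is that the conditional rules of Figure~\ref{figconditionalII} carry no side contexts, so the extra context-permutation checks you flag for $CE$/$CM$ are vacuous.
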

\begin{proof}
Following the
cut elimination proof for $\Gthc$ in \cite{Troelstra}, we successively eliminate cuts from proofs, always considering those cuts that have no cuts above them, the {\em topmost} cuts. For this, it suffices to show that for any cut-free proofs $\cald_1$ and $\cald_2$, the following proof $\cald$ of the sequent $S=(\Ga_1, \Ga_2 \seq \De_1,\De_2)$ can be transformed into a cut-free proof $\cald'$ of the same endsequent: 
\begin{center}
 \AxiomC{$\cald_1$} \noLine
 \UnaryInfC{$\Ga_1 \seq \phi,\De_1$}
 \AxiomC{$\cald_2$} \noLine
 \UnaryInfC{$\phi,\Ga_2 \seq \De_2$}
 \RightLabel{\footnotesize \it Cut}
 \BinaryInfC{$\Ga_1, \Ga_2 \seq \De_1,\De_2$}
 \DisplayProof
\end{center}
This is proved by induction on the weight of the cut formula, with a subinduction on the level of the cut. We use Lemma \ref{AdmissibleRules} implicitly at various places. \\
There are three possibilities: (1) at least one of the premises is an axiom, (2) both premises are not axioms and the cut formula is not principal in at least one of the premises, (3) the cut formula is principal in both premises, which are not axioms. \\
We only investigate case 3, and only the case where the last inference of the premises of the cut rule are instances of the conditional rules. The rest of the proof is completely similar to the proof of cut-elimination for $\Gthc$ in \cite{Troelstra}, using the fact that contraction and $\bot$-elimination are admissible in $G$, see Lemma \ref{AdmissibleRules}.\\
We distinguish by cases according to the conditional rules that the last inference of the premises are an instance of. To simplify the proof, in the rest of this proof, we use the following notation:

\begin{center}
 \small
 \AxiomC{$\cald^x$} \noLine
 \UnaryInfC{$\phi_i \Ifff \phi_j$} 
 \DisplayProof
is short for 
 \AxiomC{$\cald^x_{ij}$} \noLine
 \UnaryInfC{$\phi_i \seq \phi_j$} 
 \DisplayProof
 \AxiomC{$\cald^x_{ji}$} \noLine\
 \UnaryInfC{$\phi_j \seq \phi_i$} 
 \DisplayProof
\end{center}
We also make the convention that a {\em cut-free proof of $\phi\Ifff\psi$} means that both $\phi\seq\psi$ and $\psi\seq\phi$ have cut-free proofs. \\

Suppose that the last inference of the premises of the cut rule are instances of $(CE)$. Thus, the proof has the following form:
\begin{center}
 \small 
 \AxiomC{$\cald^0$} \noLine
 \UnaryInfC{$\phi_0\Ifff\phi_1$}
 \AxiomC{$\cald^1$} \noLine
 \UnaryInfC{$\psi_0\Ifff\psi_1$}
 \RightLabel{\footnotesize $CE$}
 \BinaryInfC{$\phi_1\triangleright\psi_1 \seq \phi_0 \triangleright \psi_0$}
 \AxiomC{$\cald^2$} \noLine
 \UnaryInfC{$\phi_2\Ifff\phi_0$}
 \AxiomC{$\cald^3$} \noLine
 \UnaryInfC{$\psi_2\Ifff\psi_0$}
 \RightLabel{\footnotesize $CE$}
 \BinaryInfC{$\phi_0\triangleright\psi_0 \seq \phi_2 \triangleright \psi_2$}
 \RightLabel{\footnotesize \it Cut}
 \BinaryInfC{$\phi_1 \triangleright \psi_1 \seq \phi_2 \triangleright \psi_2$}
 \DisplayProof
\end{center}
Consider the following proof of $\phi_1\seq \phi_2$:
\begin{center}
 \small 
 \AxiomC{$\cald^0_{10}$} \noLine
 \UnaryInfC{$\phi_1\seq\phi_0$}
 \AxiomC{$\cald^2_{02}$} \noLine
 \UnaryInfC{$\phi_0\seq\phi_2$}
 \RightLabel{\footnotesize\it Cut}
 \BinaryInfC{$\phi_1 \seq \phi_2$}
 \DisplayProof
\end{center}
The cut has lower weight and thus by the induction hypothesis, there exists a cut-free proof $\cald_{12}$ of $\phi_1\seq\phi_2$. Likewise, there is a cut-free proof of $\phi_2\seq\phi_1$. Therefore, we have a cut free proof of $\phi_1\Ifff\phi_2$. Similarly, there is a cut-free proof of $\psi_1\Ifff\psi_2$. An application of $(CE)$ results in a cut-free proof of the sequent $\phi_1\triangleright\psi_1 \seq \phi_2\triangleright\psi_2$.\\
Suppose that the last inferences of the premises of the cut are instances of $(CM)$. Thus, the proof has the following form, where
\begin{center}
 \small 
 \AxiomC{$\cald^0$} \noLine
 \UnaryInfC{$\phi_0\Ifff\phi_1$}
 \AxiomC{$\cald^1$} \noLine
 \UnaryInfC{$\psi_1\seq\psi_0$}
 \RightLabel{\footnotesize$CM$}
 \BinaryInfC{$\phi_1\triangleright\psi_1 \seq \phi_0 \triangleright \psi_0$}
 \AxiomC{$\cald^2$} \noLine
 \UnaryInfC{$\phi_2 \Ifff \phi_0$}
 \AxiomC{$\cald^3$} \noLine
 \UnaryInfC{$\psi_0 \seq \psi_2$}
 \RightLabel{\footnotesize$CM$}
 \BinaryInfC{$\phi_0\triangleright\psi_0 \seq \phi_2 \triangleright \psi_2$}
 \RightLabel{\footnotesize \it Cut}
 \BinaryInfC{$\phi_1 \triangleright \psi_1 \seq \phi_2 \triangleright \psi_2$}
 \DisplayProof
\end{center}
As in the case of $(CE)$, it follows that there is a cut-free proof of $\phi_1\Ifff\phi_2$. Also, a cut on $\psi_0$ after $\cald_1$ and $\cald_3$ shows that there is a cut-free proof of $\psi_1 \seq \psi_2$. An application of $(CM)$ then gives a cut-free proof of $\phi_1 \triangleright \psi_1 \seq \phi_2 \triangleright \psi_2$.\\
Suppose that the last inferences above the cut are instances of the rule $(CMC)$. Thus, the proof has the following form, where $\Ga$ and $\Ga'$ abbreviate $\{\phi_1 \triangleright \psi_1,\dots,\phi_n\triangleright\psi_n\}$ and $\{\phi_1' \triangleright \psi_1',\dots,\phi_m'\triangleright\psi_m'\}$, respectively: 
\begin{center}
 \small 
 \AxiomC{$\cald_1$} \noLine
 \UnaryInfC{$\Ga \seq \phi_0 \triangleright \psi_0$}
 \AxiomC{$\cald_2$} \noLine
 \UnaryInfC{$\phi_0 \triangleright \psi_0,\Ga'\seq \phi_0' \triangleright \psi_0'$}
 \RightLabel{\footnotesize \it Cut}
 \BinaryInfC{$\Ga,\Ga'\seq \phi_0' \triangleright \psi_0'$}
 \DisplayProof
\end{center}
Here $\cald_1$ and $\cald_2$ are the following two derivations: 
\begin{center}
 \small 
 \AxiomC{$\cald^1_1$} \noLine
 \UnaryInfC{$\phi_0\Ifff\phi_1$}
 \AxiomC{$\dots$} 
 \AxiomC{$\cald^1_n$} \noLine
 \UnaryInfC{$\phi_0\Ifff\phi_n$}
 \AxiomC{$\cald_1'$} \noLine
 \UnaryInfC{$\psi_1,\dots,\psi_n \seq \psi_0$}
 \RightLabel{\footnotesize \it CMC}
 \LeftLabel{$\cald_1$:\ \ \ \ \ }
 \QuaternaryInfC{$\Ga \seq \phi_0 \triangleright \psi_0$}
 \DisplayProof
\end{center}
\begin{center}
 \small 
 \AxiomC{$\cald^2_1$} \noLine
 \UnaryInfC{$\phi_0'\Ifff\phi_1'$}
 \AxiomC{$\dots$} 
 \AxiomC{$\cald^2_n$} \noLine
 \UnaryInfC{$\phi_0'\Ifff\phi_m'$}
 \AxiomC{$\cald_0$} \noLine
 \UnaryInfC{$\phi_0'\Ifff \phi_0$}
 \AxiomC{$\cald_2'$} \noLine
 \UnaryInfC{$\psi_0,\psi_1',\dots,\psi_m' \seq \psi_0'$} 
 \RightLabel{\footnotesize \it CMC}
 \LeftLabel{$\cald_2$:\ \ \ \ \ }
 \QuinaryInfC{$\phi_0 \triangleright \psi_0,\Ga'\seq \phi_0' \triangleright \psi_0'$}
 \DisplayProof
\end{center}
As before, it is easy to prove the existence of a cut-free proof for the sequents $\phi'_0 \Leftrightarrow \phi_i$ and $\phi'_0 \Leftrightarrow \phi'_j$, for any $1 \leq i \leq n$ and $1 \leq j \leq m$. Now, consider the following proof:
\begin{center}
 \small 
 \AxiomC{$\cald_1'$} \noLine
 \UnaryInfC{$\psi_1,\dots,\psi_n \seq \psi_0$}
 \AxiomC{$\cald_2'$} \noLine
 \UnaryInfC{$\psi_0,\psi_1',\dots,\psi_m' \seq \psi'_0$}
  \RightLabel{\footnotesize \it Cut}
 \BinaryInfC{$\psi_1,\dots,\psi_n,\psi_1',\dots,\psi_m' \seq \psi'_0$}
 \DisplayProof
\end{center}
Since its cut is of lower weight, there exists a cut-free proof of its endsequent. Applying the following instance of the rule $(CMC)$, we reach the cut-free proof we are looking for:
\begin{center}
 \small 
 \AxiomC{$\{\phi'_0\Ifff\phi_i\}_{i=1}^n$}
 \AxiomC{$\{\phi_0'\Ifff\phi_j'\}_{j=1}^m$}
 \AxiomC{$\psi_1,\dots,\psi_n,\psi_1',\dots,\psi_m' \seq \psi'_0$}
 \TrinaryInfC{$\Ga,\Ga'\seq \phi_0' \triangleright \psi_0'$}
 \DisplayProof
\end{center}
Finally, for the systems in the form $\mathbf{GXN}$ that contain the rule $(CN)$, note that the last inference of the right branch above the cut cannot be an instance of $(CN)$, as any of its instances has an empty antecedent. Hence, there are two cases to consider. The case that both last inferences are some instances of $(X)$ has been treated above. Therefore, only the case that the left premise is the conclusion of $(CN)$ and the right premise is the conclusion of $(X)$ remains. We only spell out the details for the case $X=CMC$. The others are similar. By assumption, the proof has the following form, where $\Ga$ abbreviates $\{\phi_2 \triangleright \psi_2,\dots,\phi_n\triangleright\psi_n\}$.
\begin{center}
 \small 
 \AxiomC{$\cald_0$} \noLine
 \UnaryInfC{$\ \seq \psi_1$}
  \RightLabel{\footnotesize \it NC }
 \UnaryInfC{$\ \seq \phi_1 \triangleright \psi_1$}
 \AxiomC{$\cald_1^1$} \noLine
 \UnaryInfC{$\phi_0\Ifff \phi_1$}
 \AxiomC{\dots} 
 \AxiomC{$\cald_1^n$} \noLine
 \UnaryInfC{$\phi_0\Ifff \phi_n$}
 \AxiomC{$\cald_2$}\noLine
 \UnaryInfC{$\psi_1,\dots,\psi_n \seq \psi_0$}
 \RightLabel{\footnotesize \it CMC }
 \QuaternaryInfC{$\phi_1\triangleright \psi_1,\Ga \seq \phi_0 \triangleright \psi_0$} 
 \RightLabel{\footnotesize\it Cut}
 \BinaryInfC{$\Ga \seq \phi_0 \triangleright \psi_0$}
 \DisplayProof
\end{center}
The following proof of the same endsequent is of lower weight:
\begin{center}
 \small 
 \AxiomC{$\cald_1^2$} \noLine
 \UnaryInfC{$\phi_0\Ifff \phi_2$}
 \AxiomC{\dots} 
 \AxiomC{$\cald_1^n$} \noLine
 \UnaryInfC{$\phi_0\Ifff \phi_n$}
 \AxiomC{$\cald_0$} \noLine
 \UnaryInfC{$\ \seq \psi_1$}
 \AxiomC{$\cald_2$}\noLine
 \UnaryInfC{$\psi_1,\dots,\psi_n \seq \psi_0$}
 \RightLabel{\footnotesize \it Cut}
 \BinaryInfC{$\psi_2,\dots,\psi_n \seq \psi_0$}
 \RightLabel{\footnotesize \it CMC }
 \QuaternaryInfC{$\Ga \seq \phi_0 \triangleright \psi_0$}
 \DisplayProof
\end{center}
that completes the proof.
\end{proof} 

\begin{corollary}
The system $\mathbf{GCE}$, $\mathbf{GCM}$, $\mathbf{GCEN}$, $\mathbf{GCMN}$, $\mathbf{GCMC}$ and $\mathbf{GCK}$ are sound and complete for the logics $\mathsf{CE}$, $\mathsf{CM}$, $\mathsf{CEN}$, $\mathsf{CMN}$, $\mathsf{CMC}$ and $\mathsf{CK}$, respectively.
\end{corollary}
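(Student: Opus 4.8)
The plan is to treat soundness and completeness separately, in each case reducing to the Hilbert-style presentations of Figure~\ref{figConditionalLogics} and using cut-admissibility (Theorem~\ref{thmcutadmll}) as the decisive ingredient. Throughout, a sequent $\Gamma \Rightarrow \Delta$ is read as the formula $\bigwedge \Gamma \to \bigvee \Delta$, and for one of the six calculi $G$ I write $\mathsf{L}_G$ for the logic it is meant to capture.

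For soundness I would show, by induction on the height of a $G$-derivation, that $G \vdash \Gamma \Rightarrow \Delta$ implies $\mathsf{L}_G \vdash \bigwedge \Gamma \to \bigvee \Delta$. For the axioms, the propositional rules of $\Gthc$, and the two weakening rules this is the routine classical argument. For each conditional rule of Figure~\ref{figconditionalII} one reads its premises and conclusion under this interpretation and rederives the conclusion inside $\mathsf{L}_G$ using only Hilbert axioms and rules that belong to $\mathsf{L}_G$: for $(CE)$ this is a single instance of the Hilbert $CE$-rule; for $(CM)$ one uses the $CE$-rule to replace $\phi_1$ by $\phi_0$ inside the conditional and then the $\mathsf{CPC}$-equivalence $\psi_1 \leftrightarrow \psi_1 \wedge \psi_0$ together with the $CE$-rule and the $(CM)$ axiom to pass from $\phi_0 \triangleright \psi_1$ to $\phi_0 \triangleright \psi_0$ (consequent monotonicity); for $(CMC)$ one uses the same ingredients plus repeated use of the $(CC)$ axiom to contract $\bigwedge_i (\phi_0 \triangleright \psi_i)$ into $\phi_0 \triangleright \bigwedge_i \psi_i$; and for $(CN)$ one uses that a theorem $\psi_0$ is $\mathsf{L}_G$-equivalent to $\top$, together with the $CE$-rule and the $(CN)$ axiom $\phi_0 \triangleright \top$. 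Observe that $\mathsf{CMC}$ and $\mathsf{CK}$ do contain the $(CM)$ axiom, as they extend $\mathsf{CM}$, so the consequent-monotonicity step is available there.

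For completeness I would first pass to $G^{+}$, the calculus $G$ with the cut rule added; by Theorem~\ref{thmcutadmll} it proves exactly the same sequents as $G$, so it suffices to show that $\mathsf{L}_G \vdash \phi$ implies $G^{+} \vdash (\Rightarrow \phi)$. Since $\mathsf{L}_G$ is the least set of formulas containing the classical tautologies and the conditional axioms of $\mathsf{L}_G$ and closed under modus ponens and the $CE$-rule, it is enough to verify that $\{\phi : G^{+} \vdash (\Rightarrow \phi)\}$ contains all these formulas and is closed under the two rule schemes. Completeness of $\Gthc$ for $\mathsf{CPC}$ yields the tautologies. Closure under modus ponens is obtained by first deriving $(\phi \Rightarrow \psi)$ from $(\Rightarrow \phi \to \psi)$ — cut $\phi \to \psi$ against the derivable sequent $\phi \to \psi, \phi \Rightarrow \psi$, or alternatively invoke height-preserving invertibility of $(R{\to})$ — and then cutting on $\phi$ against $(\Rightarrow \phi)$; the identity sequents $\chi \Rightarrow \chi$ used here are derivable in $\Gthc$ by induction on $w(\chi)$. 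Closure under the $CE$-rule is obtained by extracting the four sequents $\phi_0 \Rightarrow \phi_1$, $\phi_1 \Rightarrow \phi_0$, $\psi_0 \Rightarrow \psi_1$, $\psi_1 \Rightarrow \psi_0$ from the two provable biconditionals and feeding them, in the appropriate orientation (both are available by symmetry), to the conditional rule of $G$ — namely $(CE)$ for $\GCE$ and $\GCEN$, $(CM)$ for $\GCM$ and $\GCMN$, and $(CMC)$ with $n = 1$ for $\GCMC$ and $\mathbf{GCK}$ — followed by $(R{\to})$. Finally the conditional axioms of $\mathsf{L}_G$ are derived directly from the matching sequent rule: the $(CM)$ axiom (for $\mathsf{CM}, \mathsf{CMN}, \mathsf{CMC}, \mathsf{CK}$) from $(CM)$, resp.\ $(CMC)$ with $n = 1$, applied with the two modal antecedents equal to the derivable sequents $\psi \wedge \theta \Rightarrow \psi$ and $\psi \wedge \theta \Rightarrow \theta$, followed by $(R\wedge)$ and $(R{\to})$; the $(CC)$ axiom (for $\mathsf{CMC}, \mathsf{CK}$) from $(CMC)$ with $n = 2$, all antecedents equal, applied to $\psi, \theta \Rightarrow \psi \wedge \theta$; and the $(CN)$ axiom (for $\mathsf{CEN}, \mathsf{CMN}, \mathsf{CK}$) from $(CN)$ applied to the derivable sequent $\Rightarrow \top$.

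I expect the real content to be concentrated in the completeness direction, and more specifically in the observation that these cut-free calculi are complete \emph{only} because cut is admissible: closure under modus ponens and closure under the $CE$-rule both route through a cut, so Theorem~\ref{thmcutadmll} is genuinely load-bearing here. The remaining point requiring care is the bookkeeping that pairs each sequent rule with precisely the right bundle of Hilbert principles — in particular recognising that consequent monotonicity of $\triangleright$ and conjunctive combination of conditionals are exactly what the $(CM)$ and $(CC)$ axioms provide, and that $(CMC)$ with $n = 1$ is already $(CM)$, so that $\GCMC$ and $\mathbf{GCK}$ simulate the $CE$-rule for free. The soundness direction, by contrast, is pure routine.
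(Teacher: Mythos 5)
Your proposal is correct and follows essentially the same route as the paper, which simply notes that by cut-admissibility (Theorem~\ref{thmcutadmll}) each sequent rule simulates its corresponding Hilbert rule or axiom and vice versa; your write-up is a detailed elaboration of exactly that argument, including the correct identification of where cut is genuinely needed (modus ponens and the $CE$-rule closure).
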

\begin{proof}
By Theorem \ref{thmcutadmll}, cut is admissible in all the systems. Using this fact, it is easy to see that each rule in any of these systems can simulate its corresponding rule or axiom in the corresponding logic and vice versa.
\end{proof}

\section{Two Flavours of Uniform Interpolation}\label{ULIPSection}
In this section, we will establish two different flavours of uniform interpolation for some of the modal and conditional logics introduced in Section \ref{Preliminaries}. First, we prove uniform Lyndon interpolation property for the logics $\mathsf{E}$, $\mathsf{M}$, $\mathsf{MC}$, $\mathsf{EN}$, $\mathsf{MN}$, $\mathsf{K}$, their conditional versions $\mathsf{CE}$, $\mathsf{CM}$, $\mathsf{CMC}$, $\mathsf{CEN}$, $\mathsf{CMN}$, and $\mathsf{CK}$, in addition to the conditional logic $\mathsf{CKID}$. Then, we move to uniform interpolation property to show that the logics $\mathsf{CKCEM}$ and $\mathsf{CKCEMID}$ enjoy UIP.\\
To explain our general strategy, we first need to extend these flavours of interpolation from the logics to the sequent calculi. Since 
all the logics considered in this paper are classical, we 
only define the universal quantifier, as the existential quantifier is constructed 
by the universal quantifier and negation. 

\begin{definition} \label{DfnUniformInterpolationSeq} 
Let $G$  be one of the sequent calculi introduced in this paper. Then,
$G$ has \emph{uniform Lyndon interpolation property (ULIP)}, if for any sequent $S$, any atom $p$ and any $\circ \in \{+, -\}$, there exists a formula $\Ap S$ such that:
\begin{description}
\item[$(var)$]
$\Ap S$ is $p^{\circ}$-free and $V^{\dagger}(\Ap S) \subseteq V^{\dagger}(S)$, for any $\dagger \in \{+, -\}$,
\item[$(i)$]
$S \cdot (\Ap S \Rightarrow)$ is derivable in $G$,
\item[$(ii)$]
for any sequent $\Gamma \Rightarrow \Delta$ such that $p \notin V^{\diamond}(\Gamma \Rightarrow \Delta)$, if $S \cdot (\Gamma \Rightarrow \Delta)$ is derivable in $G$ then $(\Gamma \Rightarrow \Ap S, \Delta)$ is derivable in $G$.
\end{description}
$\Ap S $ is called a 
{\em uniform $\A_p^{\circ}$-interpolant of $S$} in $G$. 
For any set of rules $\mathcal{R}$ of $G$, a formula $\Apr S$ is called a 
{\em uniform $\A_p^{\circ}$-interpolant of $S$ with respect to $\mathcal{R}$}, 
if it satisfies the 
conditions $(var)$ and $(i)$, when $\Ap S$ is replaced by $\Apr S$, and: 
\begin{description}
\item[$(ii')$]
for any sequent $\Gamma \Rightarrow \Delta$ such that $p \notin V^{\diamond}(\Gamma \Rightarrow \Delta)$, if there is a derivation of $S \cdot (\Gamma \Rightarrow \Delta)$ in $G$ whose last inference rule is an instance of a rule in $\mathcal{R}$, then $(\Gamma \Rightarrow \Apr S, \Delta)$ is derivable in $G$.
\end{description}
The sequent calculus $G$ has \emph{uniform interpolation property (UIP)} if it has all the above properties, omitting the superscripts $\circ, \diamond, \dagger \in \{+, -\}$, everywhere. Specifically, for any set of rules $\mathcal{R}$ of $G$, a formula $\forall_{\mathcal R} p S$, called a 
{\em uniform $\A_p$-interpolant of $S$ with respect to $\mathcal{R}$}, if it satisfies $(var)$, $(i)$, and $(ii')$, omitting $\circ, \diamond, \dagger \in \{+, -\}$ everywhere.
\end{definition}

\begin{remark}
As the formula $\Ap S$ is provably unique, using the functional notation of writing $\Ap S$ as a function with the arguments $\circ \in \{+, -\}$, $p$ and $S$ is allowed. The same does not hold for $\Apr S$. However, as there is no risk of confusion and we will be specific about the construction of the formula $\Apr S$, we will also use the functional notation in this case. The situation with $\forall p S$ and $\forall_{\mathcal R} p S$ is similar.
\end{remark}

The following theorem connects ULIP and UIP for sequent calculi to the original version defined for logics, as we expect:
\begin{theorem} \label{SequentImpliesLogic}
Let $G$ be one of the sequent calculi introduced in this paper and $L$ be its logic. Then, 
$G$ has ULIP (resp., UIP) 
iff $L$ has ULIP (resp., UIP). 
\end{theorem}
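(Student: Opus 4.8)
The plan is to prove the two directions of the biconditional separately, reducing ULIP (resp.\ UIP) of the logic to ULIP (resp.\ UIP) of the sequent calculus and vice versa, using soundness and completeness of $G$ for $L$ together with the fact that sequents are interpreted as implications between a conjunction and a disjunction. Throughout, I would phrase everything for ULIP; the UIP case is identical after erasing all the superscripts $\circ,\diamond,\dagger$, so I would just remark on that at the end rather than repeating the argument.

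For the direction ``$G$ has ULIP $\Rightarrow$ $L$ has ULIP'': given a formula $\phi\in\mathcal L$, an atom $p$, and $\circ\in\{+,-\}$, I would consider the single-formula sequent $S = (\,\Rightarrow \phi\,)$ and set $\Ap\phi := \Ep(\,\phi\Rightarrow\,)$ — wait, more carefully: since the paper only defines the universal quantifier at the sequent level, I would define $\Ep\phi$ via $\Ap$ of the sequent $(\phi\Rightarrow)$ and $\Ap\phi$ via $\Ep$ of $(\Rightarrow\phi)$, using the classical duality $\Ep\phi = \neg\Ap\neg\phi$. Concretely, take $\Ap\phi := \Ap(\,\Rightarrow\phi\,)$. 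Then $V^{\dagger}(\,\Rightarrow\phi\,) = V^{\dagger}(\phi)$, so the $(var)$ clause of Definition~\ref{DfnUniformInterpolationSeq} gives exactly the variable condition of Definition~\ref{DfnUniformInterpolation}. Clause $(i)$ says $(\,\Rightarrow\phi\,)\cdot(\Ap\phi\Rightarrow) = (\Ap\phi \Rightarrow \phi)$ is derivable in $G$, which by soundness gives $L\vdash \Ap\phi\to\phi$, i.e.\ condition $(i)$ of the logic version. For condition $(ii)$: if $\psi$ is $p^{\circ}$-free and $L\vdash\psi\to\phi$, then by completeness $G\vdash(\psi\Rightarrow\phi)$, which is $(\,\Rightarrow\phi\,)\cdot(\psi\Rightarrow)$; since $p\notin V^{\circ}(\psi) = V^{\diamond}(\psi\Rightarrow)$, clause $(ii)$ of the calculus yields $G\vdash(\psi\Rightarrow\Ap\phi)$, hence $L\vdash\psi\to\Ap\phi$ by soundness. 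The existential clauses $(iii),(iv)$ follow symmetrically, either by a dual argument with the sequent $(\phi\Rightarrow)$ or by setting $\Ep\phi := \neg\,\Ap^{\diamond}(\neg\phi)$ and using classical logic; I would pick whichever is cleaner and check the polarity bookkeeping, noting that negation swaps $V^+$ and $V^-$ which is why the superscript flips from $\circ$ to $\diamond$.

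For the converse ``$L$ has ULIP $\Rightarrow$ $G$ has ULIP'': given an arbitrary sequent $S = (\Gamma\Rightarrow\Delta)$, let $\gamma = \bigwedge\Gamma$ and $\delta = \bigvee\Delta$, so that $S$ is interpreted as $\gamma\to\delta$, equivalently as the formula $\chi_S := \gamma\wedge\neg\delta$ being refutable. I would compute $V^+(\chi_S) = V^-(\Gamma)\cup V^+(\Delta) \cup (\text{stuff from }\neg) = V^-(S^a)\cup V^+(S^s) = V^+(S)$ and similarly $V^-(\chi_S)=V^-(S)$ using the definition of $V^{\pm}$ on $\wedge,\neg$ and the paper's definition of $V^{\pm}(S)$; this matching of polarities is the crucial computation. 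Then set $\Ap S := \Ep\chi_S$ (the existential uniform interpolant of $\chi_S$ from the logic). Clause $(var)$ is immediate: $\Ep\chi_S$ is $p^{\circ}$-free and $V^{\dagger}(\Ep\chi_S)\subseteq V^{\dagger}(\chi_S) = V^{\dagger}(S)$. For $(i)$: $S\cdot(\Ap S\Rightarrow)$ interprets as $\gamma\wedge\Ep\chi_S \to\delta$, i.e.\ $\Ep\chi_S\to(\gamma\to\delta)$; since $L\vdash\chi_S\to\Ep\chi_S$ and $\chi_S\to(\gamma\to\delta)$ is a tautology (as $\chi_S=\gamma\wedge\neg\delta$ is inconsistent with $\gamma\wedge\neg\delta$... let me restate: $\chi_S\vdash\bot$ is not a tautology, rather $\neg\chi_S = (\gamma\to\delta)$ is, so I want $L\vdash \Ep\chi_S\to\neg\chi_S$; but that's false in general). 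I need to be careful here: the right move is to take $\Ap S := \neg\,\Ap^{?}(\chi_S)$ where $\chi_S$ encodes the \emph{negation} appropriately, or equivalently work with the formula $\theta_S := \neg\gamma\vee\delta$ (the interpretation of $S$ itself) and set $\Ap S := \neg\,\Ep(\gamma\wedge\neg\delta)$ — then $S\cdot(\Ap S\Rightarrow)$ derivable $\iff L\vdash \gamma\wedge\Ap S\to\delta \iff L\vdash\neg\Ap S\vee\delta \iff L\vdash \Ep(\gamma\wedge\neg\delta)\vee\delta$... I would sort out this dualization carefully in the writeup; the point is that $S$ holds iff $\chi_S$ is refutable, and the $\forall/\exists$ interpolant of the refutable formula $\chi_S$, suitably negated, serves as $\Ap S$. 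Clause $(ii)$ then translates: if $p\notin V^{\diamond}(\Pi\Rightarrow\Lambda)$ and $S\cdot(\Pi\Rightarrow\Lambda)$ is derivable, then in $L$ we get $\vdash (\gamma\wedge\bigwedge\Pi)\to(\delta\vee\bigvee\Lambda)$, rearrange to isolate a $p^{\circ}$-free formula $\psi$ built from $\Pi,\Lambda$ with $L\vdash\psi\to(\text{something involving }\chi_S)$, apply the logic's uniform interpolation clause $(iv)$, and rearrange back to get $G\vdash(\Pi\Rightarrow\Ap S,\Lambda)$ via completeness.

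The main obstacle, and the place where I would spend the most care, is the dualization and polarity bookkeeping in the second direction: translating a \emph{sequent} $\Gamma\Rightarrow\Delta$ into a \emph{formula} so that (a) the positive/negative variable sets match $V^{\pm}(S)$ exactly — this forces the formula to be something like $\bigwedge\Gamma\wedge\neg\bigvee\Delta$ or its negation, and one must track how $\neg$ flips polarities against the paper's asymmetric definition $V^+(S)=V^-(S^a)\cup V^+(S^s)$ — and (b) the derivability of composite sequents $S\cdot T$ corresponds correctly, under soundness/completeness, to provable implications in $L$ of the right shape, with the side condition ``$p\notin V^{\diamond}$'' on the context sequent matching the ``$p^{\circ}$-free'' hypothesis on $\psi$ in the logic's definition (again with a $\circ/\diamond$ swap coming from the negation). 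Once the dictionary between sequents-and-their-compositions and formulas-and-implications is set up correctly, both directions are routine applications of soundness, completeness, and the definitions; everything else, including the reduction of the existential clauses to the universal ones via classical negation, is bookkeeping. I would also note that because $L$ is closed under substitution and contains $\mathsf{CPC}$, all the propositional rearrangements used are available, and that the finiteness of $\Gamma,\Delta$ makes $\bigwedge\Gamma$ and $\bigvee\Delta$ genuine formulas of $\mathcal L$.
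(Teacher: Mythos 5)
Your proposal follows essentially the same route as the paper's (which is itself only a two-line sketch giving the key definitions): for the forward direction you set $\Ap \phi = \Ap(\,\Rightarrow\phi)$ and obtain the existential interpolant by the classical duality $\Ep\phi=\neg\Apd\neg\phi$, exactly as the paper does, and the verification via soundness and completeness is as you describe. The one place where you wobble is the converse direction, and the loose end you explicitly defer (``I would sort out this dualization carefully'') is resolved most cleanly by the paper's choice: take $\Ap(\Gamma\Rightarrow\Delta)=\Ap(\bigwedge\Gamma\to\bigvee\Delta)$ directly, i.e.\ the \emph{universal} interpolant of the implication, rather than a negated existential interpolant of $\bigwedge\Gamma\wedge\neg\bigvee\Delta$. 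This sidesteps the negation gymnastics entirely, because the polarity clause $V^{+}(\phi\to\psi)=V^{-}(\phi)\cup V^{+}(\psi)$ matches the paper's convention $V^{+}(S)=V^{-}(S^{a})\cup V^{+}(S^{s})$ on the nose, so clause $(var)$ is immediate with no $\circ/\diamond$ swap. The only place a negation enters is in verifying clause $(ii)$: from derivability of $S\cdot(\Pi\Rightarrow\Lambda)$ one passes to $L\vdash(\bigwedge\Pi\wedge\neg\bigvee\Lambda)\to(\bigwedge\Gamma\to\bigvee\Delta)$, and there the hypothesis $p\notin V^{\diamond}(\Pi\Rightarrow\Lambda)$ is precisely the statement that $\bigwedge\Pi\wedge\neg\bigvee\Lambda$ is $p^{\circ}$-free, so the logic's clause $(ii)$ applies and one rearranges back. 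Your alternative $\neg\,\exists^{\diamond}p\,(\bigwedge\Gamma\wedge\neg\bigvee\Delta)$ is provably equivalent to the paper's interpolant by uniqueness, so your plan would go through once the superscript on the existential quantifier is fixed to $\diamond$; but as written the dualization is left unfinished, and the first candidate you wrote down ($\Ep\chi_S$ un-negated) is, as you noticed yourself, wrong.
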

\begin{proof}
We only provide the sketch of the proof for ULIP. The proof for the other case is identical and only requires omitting the superscripts $\circ, \diamond \in \{+, -\}$. If $G$ has ULIP, 
set $\Ap A=\Ap (\, \Rightarrow A)$ and $\Ep A=\neg \Apd \neg A$. 
Conversely, if $L$ has ULIP, 
set $\Ap (\Gamma \Rightarrow \Delta)=\Ap (\bigwedge \Gamma \to \bigvee \Delta)$. 
\end{proof}

Our strategy to prove ULIP or UIP for a logic is to prove the same property for its corresponding sequent calculus. We will explain the strategy for the harder case of ULIP. However, the proofs also work for UIP and the general constructions must be read as the argument towards proving UIP, as well. We will use this general strategy and the corresponding theorems in Subsection \ref{ULIPSubsection} to prove ULIP for the logics mentioned at the beginning of this section and in Subsection \ref{SecUIP} to prove UIP for the logics $\mathsf{CKCEM}$ and $\mathsf{CKCEMID}$.\\
To explain the main strategy, let $G$ be the system for one of the logics mentioned at the beginning of this section and recall that the backward application of any of the rules in $G$ decreases the weight of the sequent. Using this property and recursion on the weight of the sequents, for any given sequent $S= (\Gamma \Rightarrow \Delta)$, any atom $p$ and any $\circ \in \{+, -\}$, we first define a $p^{\circ}$-free formula $\Ap S$ and then by induction on the weight of $S$, we 
prove that $\Ap S$ meets 
the conditions in Definition \ref{DfnUniformInterpolationSeq}. Towards that end, 
both in the definition of $\Ap S$ and in the proof of its properties, we must address all the rules of the system $G$, one by one. To make the presentation uniform, modular, and more clear, we divide the rules of $G$ into two families: the rules of $\GthW$ and the modal or the conditional rules specific for $G$. The rules in the first class has one of the following forms:

\begin{center}
 \begin{tabular}{c c}
 \AxiomC{$\{ \Gamma, \bar{\phi}_i \Rightarrow \bar{\psi}_i, \Delta \}_i$}
 \UnaryInfC{$\Gamma, \phi \Rightarrow \Delta$}
 \DisplayProof \;\;\;
 &
 \AxiomC{$\{ \Gamma, \bar{\phi}_i \Rightarrow \bar{\psi}_i, \Delta \}_i$}
 \UnaryInfC{$\Gamma \Rightarrow \phi, \Delta$}
 \DisplayProof
\end{tabular}
\end{center}
where $\Gamma$ and $\Delta$ are free for all multiset substitutions, and $\bar{\phi}_i$'s and $\bar{\psi}_i$'s are multisets of formulas (possibly empty). The rules 
have the \emph{variable preserving condition}, i.e., given $\circ \in \{+, -\}$, for the left rule 
\[
\bigcup_i \bigcup_{\theta \in \bar{\phi}_i} V^{\circ}(\theta) \cup \bigcup_i \bigcup_{\theta \in \bar{\psi}_i} V^{\diamond}(\theta) \subseteq V^{\circ}(\phi),
\]
and for the right one
\[
\bigcup_i \bigcup_{\theta \in \bar{\phi}_i} V^{\diamond}(\theta) \cup \bigcup_i \bigcup_{\theta \in \bar{\psi}_i} V^{\circ}(\theta) \subseteq V^{\circ}(\phi).
\]
\noindent Rather than addressing each 
rule in $\GthW$, we simply 
address these two forms that cover all the rules in $\GthW$.

\begin{lemma}\label{Axiom}
For any sequent $S$, atom $p$ and $\circ \in \{+, -\}$, a uniform $\A^{\circ}_p$-interpolant (resp., a uniform $\A_p$-interpolant) of $S$ with respect to the set of all axioms of $G$ exists. 
\end{lemma}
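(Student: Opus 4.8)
The statement asks for a uniform $\A_p^\circ$-interpolant of an arbitrary sequent $S$ \emph{with respect to the set of all axioms of $G$}, i.e.\ with respect to $\mathcal R = \{Ax, L\bot\}$. By Definition~\ref{DfnUniformInterpolationSeq}, this means I must produce a $p^\circ$-free formula $\Apax S$ with $V^\dagger(\Apax S)\subseteq V^\dagger(S)$ that satisfies $(i)$, namely that $S\cdot(\Apax S\Rightarrow)$ is derivable, and satisfies $(ii')$: whenever $S\cdot(\Gamma\Rightarrow\Delta)$ is the conclusion of an instance of an axiom rule with $p\notin V^\diamond(\Gamma\Rightarrow\Delta)$, then $\Gamma\Rightarrow\Apax S,\Delta$ is derivable. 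The key observation is that the hypothesis in $(ii')$ is very restrictive: if $S\cdot(\Gamma\Rightarrow\Delta)$ is an instance of $Ax$, then either the principal atom already occurs on both sides inside $S$ alone, or it is contributed by $\Gamma$ or $\Delta$; similarly for $L\bot$, either $\bot\in S^a$ or $\bot\in\Gamma$.

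\textbf{Definition of the interpolant.} Write $S=(\Sigma\Rightarrow\Lambda)$. I would set $\Apax S$ to be a disjunction of the cases in which $S$ \emph{by itself} already forces derivability:
\begin{itemize}
\item if $\bot\in\Sigma$, or if some atom $q$ occurs in both $\Sigma$ and $\Lambda$ that is ``safe'' (meaning the atom is $p^\circ$-free-compatible, which is automatic here since the atom lies in $V^\circ(S)$ on the relevant side), then the sequent $S$ is \emph{already} derivable and we can take $\Apax S=\bot$;
\item otherwise, the only way $S\cdot(\Gamma\Rightarrow\Delta)$ can be an axiom instance is via a formula supplied by the context $\Gamma\Rightarrow\Delta$: either $\bot\in\Gamma$, or some atom $q$ lies in $\Sigma$ with $q\in\Delta$, or in $\Lambda$ with $q\in\Gamma$, or $q\in\Gamma\cap\Delta$. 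In all of these the conclusion $\Gamma\Rightarrow\Apax S,\Delta$ must be derivable \emph{for every} such $\Gamma,\Delta$ with $p\notin V^\diamond(\Gamma\Rightarrow\Delta)$.
\end{itemize}
Concretely: let $A$ be the set of atoms occurring in $\Sigma$ (the antecedent side of $S$) and $B$ the set occurring in $\Lambda$. Take
\[
 \Apax S \;=\; \bigvee_{q\in A,\ q\neq p \text{ if } \circ=-}\! q \;\vee\; \bigvee_{q\in B,\ q\neq p \text{ if } \circ=+} \neg q,
\]
together with the disjunct $\top$ in case $\bot\in\Sigma$ (equivalently, set $\Apax S=\bot$ precisely when $S$ is already an axiom, and the displayed disjunction otherwise). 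One checks that this formula is $p^\circ$-free by construction and that $V^\dagger(\Apax S)\subseteq V^\dagger(S)$, using the rule for $V^\pm$ of implications/negations: a positive occurrence of $q\in A$ in $\Apax S$ corresponds to $q\in V^-(S^a)\subseteq V^+(S)$ on the left, and a positive occurrence of $\neg q$ with $q\in B$ gives $q\in V^-(\Apax S)$, matching $q\in V^+(S^s)\subseteq V^+(S)$; symmetrically for negative occurrences. This bookkeeping is routine but must be done carefully with the $\circ/\diamond$ conventions.

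\textbf{Verification of $(i)$ and $(ii')$.} For $(i)$: $S\cdot(\Apax S\Rightarrow)$ is $(\Sigma,\Apax S\Rightarrow\Lambda)$; applying $L\vee$ repeatedly reduces this to showing $\Sigma,q\Rightarrow\Lambda$ for each $q\in A$ (immediate by $Ax$, since $q\in\Lambda$? no — rather $q\in\Sigma$ already, hmm) — more precisely one proves $\Sigma,\Apax S\Rightarrow\Lambda$ by noting each disjunct $q$ (with $q\in A$) together with a copy of $q$ already in $\Sigma$ does not immediately give an axiom, so instead I structure $\Apax S$ as the disjunction over atoms shared structure; the clean way is: each left disjunct $q$ satisfies $q\in\Sigma$, hence $\Sigma,q\Rightarrow\Lambda$ follows after we also know $q\in\Lambda$... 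This shows the real content: $(i)$ forces $\Apax S$ to capture exactly the ``axiom-closure'' of $S$, so the correct definition is $\Apax S = \bigvee\{q : q\in\Sigma\cap\Lambda\}\vee(\text{$\top$ if }\bot\in\Sigma)$ for the part making $S$ itself derivable, \emph{plus} the context-facing disjuncts $\bigvee_{q\in\Sigma}q\vee\bigvee_{q\in\Lambda}\neg q$ that handle $(ii')$ — and then $(i)$ holds because $S,\Apax S\Rightarrow\Lambda$ splits via $L\vee$ into cases each containing an $Ax$ or $L\bot$. For $(ii')$: given $S\cdot(\Gamma\Rightarrow\Delta)$ an axiom instance with $p\notin V^\diamond(\Gamma\Rightarrow\Delta)$, I case on which axiom and which side supplies the principal formula, and in each case exhibit the matching disjunct of $\Apax S$ to derive $\Gamma\Rightarrow\Apax S,\Delta$ by an $R\vee$-step followed by $Ax$ or $L\bot$; the side condition $p\notin V^\diamond$ is exactly what guarantees the relevant disjunct of $\Apax S$ is available and $p^\circ$-free consistently.

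\textbf{Main obstacle.} The real subtlety — and the step I expect to consume most of the care — is getting the \emph{variable condition} $(var)$ and the $(ii')$ derivation to cohere under the $\circ$/$\diamond$ polarity conventions: I must ensure that the disjuncts of $\Apax S$ coming from the antecedent of $S$ enter with polarity matching $V^\circ(S)$ via $V^-(S^a)$, and those from the succedent via $V^+(S^s)$, so that no disjunct secretly re-introduces $p$ with the forbidden polarity $\circ$, while still being able to close every axiom instance that the context can create. Once the correct formula is pinned down, $(i)$ and $(ii')$ are short; the delicate part is that the definition of $\Apax S$ must simultaneously (a) be $p^\circ$-free, (b) respect $V^\dagger\subseteq V^\dagger(S)$, and (c) be large enough to absorb every context-supplied axiom — and verifying (a)–(c) are jointly satisfiable is where the polarity calculus set up in the Preliminaries does the work.
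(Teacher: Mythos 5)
There is a genuine gap: your interpolant has the antecedent/succedent polarities reversed, and this breaks condition $(i)$ outright. Condition $(i)$ demands that $S\cdot(\Apax S\Rightarrow)$ be derivable, i.e.\ that $\Apax S$ imply $\bigwedge S^a\to\bigvee S^s$; the weakest such formula is $\bigvee_{\psi\in S^s}\psi\;\vee\;\bigvee_{\phi\in S^a}\neg\phi$, and indeed the paper defines $\Apax S$ (when $S$ is unprovable) as the disjunction of the $p^{\circ}$-free formulas of the \emph{succedent} together with the \emph{negations} of the $p^{\diamond}$-free formulas of the \emph{antecedent}. You take the opposite: atoms of $\Sigma=S^a$ occur positively and atoms of $\Lambda=S^s$ negatively. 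For $S=(q\Rightarrow r)$ your formula is $q\vee\neg r$, and $q,\;q\vee\neg r\Rightarrow r$ is not derivable, so $(i)$ fails; $(ii')$ fails for the same reason (e.g.\ with $q\in\bar{C}$ and $q\in S^s$ you need the disjunct $q$, not $\neg q$, to close $\bar{C}\Rightarrow\Apax S,\bar{D}$). You actually ran into this while checking $(i)$ (``immediate by $Ax$, since $q\in\Lambda$? no --- rather $q\in\Sigma$ already, hmm''), but the ``corrected'' definition you then propose keeps the same wrong orientation. The $p$-freeness side conditions are reversed as well: a positive disjunct $q$ puts $q$ into $V^+(\Apax S)$, so it is the case $\circ=+$ (not $\circ=-$) that forces $q\neq p$ there, and dually for the negated disjuncts.

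Two further points. First, your initial suggestion to set $\Apax S=\bot$ when $S$ is already derivable is also wrong: $(ii')$ quantifies over \emph{all} $p^{\diamond}$-free contexts, including the empty one, so you would need $\Rightarrow\bot$ to be derivable; the correct value is $\top$ (then $\bar{C}\Rightarrow\top,\bar{D}$ is trivially derivable, and $S\cdot(\top\Rightarrow)$ follows from the derivability of $S$ by weakening/admissible weakening). Your later disjunct $\bigvee\{q:q\in\Sigma\cap\Lambda\}$ does not repair this, since $\Rightarrow q$ is not derivable. Second, the rest of your skeleton --- splitting $(ii')$ by which side of the context supplies the principal formula of $Ax$ or $L\bot$, and observing that if the shared atom lies entirely inside $S$ then $S$ is provable --- is exactly the paper's case analysis and would go through once the formula is fixed to
$\Apax S=\bigvee\{\psi\in S^s:\psi \text{ is } p^{\circ}\text{-free}\}\vee\bigvee\{\neg\phi:\phi\in S^a,\ \phi \text{ is } p^{\diamond}\text{-free}\}$
for unprovable $S$, and $\top$ otherwise.
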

\begin{proof}
We only prove the existence of a $\A^{\circ}_p$-interpolant. The other case is similar. It is just enough to omit all the occurrences of $\circ$, $\diamond$ and $\dagger$ in the proof.
Let us define the formula $\Apax S$ in the following way: if $S$ is provable, define it  
as $\top$, otherwise, define 
it as the disjunction of all $p^{\circ}$-free formulas in $S^s$ and the negation of all $p^{\diamond}$-free formulas in $S^a$. We show that  $\Apax S$ is the uniform $\A_p^{\circ}$-interpolant of $S$ with respect to the set $\mathcal{A}$ of axioms of $G$. 
It is easy to see that 
$\Apax S$ is $p^{\circ}$-free, $V^{\dagger}(\Apax S) \subseteq V^{\dagger}(S)$, for 
$\dagger \in \{+, -\}$ and $S \cdot (\Apax S \Rightarrow )$ is provable in $G$. To prove the condition $(ii')$ in Definition \ref{DfnUniformInterpolationSeq}, if $S$ is provable, then as $\Apax S=\top$, we have $\bar{C} \Rightarrow \forall^{\circ}_{\mathcal{A}} p S, \bar{D}$. If $S$ is not provable, then let $S \cdot (\bar{C} \Rightarrow \bar{D})$ be an axiom. There are two cases to consider. First, if $S \cdot (\bar{C} \Rightarrow \bar{D})$ is in the form $\Gamma, q \Rightarrow q, \Delta$, where $q$ is an atomic formula. Then, if  $q \notin \bar{C}$ and $q \notin \bar{D}$, we have $q \in \Gamma \cap \Delta$ and hence the sequent $S$ is 
provable which contradicts our assumption. Therefore, either $q \in \bar{C}$ or $q \in \bar{D}$. If $q \in \bar{C} \cap \bar{D}$, then $\bar{C} \Rightarrow \forall^{\circ}_{\mathcal{A}} p S, \bar{D}$ is provable. Hence, we assume either $q \in \bar{C}$ and $q \notin \bar{D}$ or $q \notin \bar{C}$ and $q \in \bar{D}$. In the first case, if $q \in \bar{C}$, it is $p^{\circ}$-free and since it occurs in $\Delta$, it is a disjunct in $\Apax S$. Hence, $\bar{C} \Rightarrow \Apax S, \bar{D}$ is provable. In the second case, if $q \in \bar{D}$, it is $p^{\diamond}$-free and as $q \in \Gamma$, its negation occurs in $\Apax S$. Therefore $\bar{C} \Rightarrow \forall^{\circ}_{\mathcal{A}} p S, \bar{D}$ is provable.\\
If $S \cdot (\bar{C} \Rightarrow \bar{D})$ is in the form $\Gamma, \bot \Rightarrow \Delta$, then $\bot \in \bar{C}$, because otherwise, $\bot \in \Gamma$ and hence $S$ will be provable. Now, since $\bot \in \bar{C}$, we have $\bar{C} \Rightarrow \forall^{\circ}_{\mathcal{A}} p S, \bar{D}$.
\end{proof}

\begin{definition}\label{DfnMULIP}
Let $\mathcal{U}^{\circ}_p(S)$ be the statement that ``all sequents lower than $S$ have uniform $\A^{\circ}_p$-interpolants." 
A calculus $G$ has \emph{modal uniform Lyndon interpolation property} \emph{(MULIP)}
if for any sequent $S$, 
atom $p$, and 
$\circ \in \{+, -\}$, there exists a formula $\Apm S$ such that if $\mathcal{U}^{\circ}_p(S)$, then $\Apm S$ is a uniform $\A^{\circ}_p$-interpolant for $S$ with respect to the set $\mathcal{M}$ of modal or conditional rules of $G$. In a similar way, \emph{modal uniform interpolation property (MUIP)} for a calculus $G$ can be defined by omitting the superscript $\circ \in \{+, -\}$ everywhere. 
\end{definition}

Note that in Definition \ref{DfnMULIP}, the adjective modal in ``modal uniform Lyndon interpolation property" and ``modal uniform interpolation property" refers to any non-propositional operator in the language and hence covers the conditionals, as well. 

\begin{theorem}\label{GeneralArgument}
If a sequent calculus $G$ has MULIP (resp., MUIP), then it has ULIP (resp., UIP).
\end{theorem}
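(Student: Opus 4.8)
The statement asserts that MULIP implies ULIP (and similarly MUIP implies UIP) for the sequent calculi $G$ considered. The natural approach is an induction on the weight of the sequent $S$, building the uniform $\A^{\circ}_p$-interpolant $\Ap S$ simultaneously for all atoms $p$ and all $\circ \in \{+,-\}$. By Definition \ref{DfnMULIP}, MULIP hands us a candidate formula $\Apm S$ that works \emph{with respect to the modal/conditional rules} $\mathcal{M}$, provided the induction hypothesis $\mathcal{U}^{\circ}_p(S)$ holds (i.e.\ all lower sequents already have uniform interpolants). Lemma \ref{Axiom} hands us a candidate $\Apax S$ that works with respect to the axioms $\mathcal{A}$. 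What remains is to handle the propositional rules of $\GthW$ and then glue all these partial interpolants together into a single formula that works for \emph{every} last rule of a derivation of $S \cdot (\Gamma \Rightarrow \Delta)$.

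\medskip

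\textbf{Key steps.} First I would fix $S$, $p$, $\circ$ and assume as induction hypothesis that every sequent of strictly smaller weight has a genuine uniform $\A^{\circ}_p$-interpolant; this is exactly $\mathcal{U}^{\circ}_p(S)$. Second, for each of the two generic forms of a $\GthW$-rule (left-principal and right-principal) with main formula $\phi$ occurring in $S$, I would define a partial interpolant: since the premises $\Gamma,\bar\phi_i \Rightarrow \bar\psi_i,\Delta$ obtained by backward application are lower than $S$, the induction hypothesis gives interpolants for $(\bar\phi_i \Rightarrow \bar\psi_i)$ (after stripping the context, using the context restriction property), and one combines them by a suitable conjunction/disjunction over the premises — a finite meet for a branching left rule, a finite join for a right rule, exactly as in Pitts-style arguments. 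The variable preserving condition on the $\GthW$-rules is precisely what guarantees the combined formula is $p^{\circ}$-free and has $V^{\dagger}$ contained in $V^{\dagger}(\phi) \subseteq V^{\dagger}(S)$. Third, I would set $\Ap S$ to be the conjunction of $\Apax S$, of $\Apm S$, and of all the partial interpolants over the (finitely many) propositional rules backward-applicable to $S$; this conjunction is still $p^{\circ}$-free with the right variable containment, and clause $(i)$ — derivability of $S \cdot (\Ap S \Rightarrow)$ — follows because each conjunct already satisfies its own version of $(i)$ and conjunction on the left only strengthens. Fourth, for clause $(ii)$, given a derivable $S \cdot (\Gamma \Rightarrow \Delta)$ with $p \notin V^{\diamond}(\Gamma \Rightarrow \Delta)$, I would look at the last rule of its derivation: if it is an axiom, Lemma \ref{Axiom} gives $\Gamma \Rightarrow \Apax S, \Delta$; if it is a modal/conditional rule, MULIP (applicable since $\mathcal{U}^{\circ}_p(S)$ holds) gives $\Gamma \Rightarrow \Apm S, \Delta$; if it is a propositional rule, the corresponding partial interpolant does the job after an appeal to the induction hypothesis applied to the (lower-weight) premises. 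In every case, from derivability of $\Gamma \Rightarrow C, \Delta$ for one conjunct $C$ of $\Ap S$, one derives $\Gamma \Rightarrow \Ap S, \Delta$ by $R\wedge$ together with weakening to reinsert the other conjuncts.

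\medskip

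\textbf{Main obstacle.} The delicate point is the propositional-rule case of clause $(ii)$: when the last rule has main formula $\phi$ lying partly in $\Gamma$ or $\Delta$ (i.e.\ the principal formula may straddle the $S$-part and the $(\Gamma \Rightarrow \Delta)$-part, or be entirely in the $(\Gamma\Rightarrow\Delta)$-part), one must carefully split the premise sequents into their $S$-components and their $(\Gamma\Rightarrow\Delta)$-components, verify the resulting subsequents are still lower than $S$, check that the polarity condition $p \notin V^{\diamond}(\cdot)$ is inherited by the relevant pieces (this uses the variable preserving condition once more), and then invoke the induction hypothesis on each branch and recombine with the propositional rule run forwards. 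Keeping track of which conjunct of $\Ap S$ is needed — and making sure the context extension/restriction properties let one move contexts freely between the two sides — is the bookkeeping-heavy heart of the argument, but no genuinely new idea beyond the Pitts/Iemhoff-style template is required. The cases where $\phi$ lies wholly inside $S$ reduce directly to the definition of the partial interpolant; the cases where $\phi$ lies wholly inside $\Gamma\Rightarrow\Delta$ are immediate from the induction hypothesis on the premises together with a forward application of the rule.
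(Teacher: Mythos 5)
Your overall architecture (combine an axiom-interpolant, the MULIP formula, and propositional partial interpolants built from the induction hypothesis on lower sequents; then do a case split on the last rule for clause $(ii)$) matches the paper's, but you have the top-level connective backwards, and this breaks the proof. You define $\Ap S$ as the \emph{conjunction} of $\Apax S$, $\Apm S$, and the propositional partial interpolants, and for clause $(ii)$ you claim that from $\Gamma \Rightarrow C, \Delta$ for the one relevant conjunct $C$ one gets $\Gamma \Rightarrow \Ap S, \Delta$ ``by $R\wedge$ together with weakening to reinsert the other conjuncts.'' That inference is invalid: $R\wedge$ needs \emph{all} conjuncts to be derivable on the right, and weakening adds whole formulas to the succedent, not conjuncts inside a formula. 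Each partial interpolant only satisfies $(ii')$ with respect to its own set of rules, so the other conjuncts are in general not implied. Concretely, when $S$ does not have the modal shape required by the construction, $\Apm S$ is $\bot$; your $\Ap S$ would then be equivalent to $\bot$, and clause $(ii)$ would demand that $\bar{C} \Rightarrow \bar{D}$ be derivable whenever $S \cdot (\bar{C} \Rightarrow \bar{D})$ is, which is false. The paper instead takes the \emph{disjunction} $\bigvee_{R}(\bigwedge_i \Ap S_i) \vee \Apax S \vee \Apm S$: then clause $(ii)$ follows from the single relevant disjunct by $Rw$ and $R\vee$, while clause $(i)$ holds because every disjunct individually satisfies $S \cdot (C \Rightarrow\,)$, so $L\vee$ applies.

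A secondary inaccuracy: for the propositional partial interpolants you propose ``a finite meet for a branching left rule, a finite join for a right rule.'' The correct choice (and the paper's) is the conjunction $\bigwedge_i \Ap S_i$ over the premises of a single backward-applicable rule, irrespective of whether the rule is left- or right-principal; the disjunction only appears across \emph{different} rules. A join over the premises of, say, $R\wedge$ would fail clause $(i)$: from $\Gamma, \Ap S_1 \Rightarrow \phi, \Delta$ alone one cannot derive $\Gamma, \Ap S_1 \Rightarrow \phi \wedge \psi, \Delta$. Also, the paper takes interpolants of the full premises $S_i$ (context included), not of the stripped active parts. Once the top-level connective is corrected to a disjunction and the per-rule combination fixed to a conjunction of the premise interpolants, the rest of your case analysis for clause $(ii)$ goes through essentially as in the paper.
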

\begin{proof}
We only prove the ULIP case. The other case is similar. It is just enough to omit all the occurrences of $\circ$, $\diamond$ and $\dagger$ in the proof.\\
Define the formula $\Ap S$ by recursion on the weight of $S$:
 if $S$ is provable define it 
 as $\top$, otherwise, define it as: 
\[
\bigvee \limits_{R} (\bigwedge \limits_i \Ap S_{i})
 \vee (\Apax S) \vee (\Apm S)
\]
where the first disjunction is over all rules $R$ in $\GthW$ backward applicable to $S$, where $S$ is the consequence 
and $S_{i}$'s are the premises. 
$\Apax S$ is a 
uniform $\A^{\circ}_p$-interpolant of $S$ with respect to the set of 
axioms of $G$ that Lemma \ref{Axiom} provides. 
$\Apm S$ is the formula that 
MULIP provides. 
To prove that 
$\Ap S$ is a  $\A^{\circ}_p$-interpolant for $S$, we use induction on the weight of $S$ to prove:  
\begin{description}
\item[$(var)$] 
$\Ap S$ is $p^{\circ}$-free and $V^{\dagger}(\Ap S) \subseteq V^{\dagger}(S)$, for any $\dagger \in \{+, -\}$,
\item[$(i)$] 
$S \cdot (\Ap S \Rightarrow )$ is provable in $G$,
\item[$(ii)$]
for any $p^{\diamond}$-free sequent $\bar{C} \Rightarrow \bar{D}$, if $S \cdot (\bar{C} \Rightarrow \bar{D})$ is derivable in $G$ then $\bar{C} \Rightarrow \Ap S, \bar{D}$ is derivable in $G$.
\end{description}
By induction hypothesis, $(var)$, $(i)$, and $(ii)$ hold for all sequents $T$ lower than $S$. Now, $(var)$ also holds for $\Ap S$, because both $\forall^{\circ}_{\mathcal{A}} p S$ and $\Apm S$ satisfy $(var)$ and all rules in $\GthW$ have the variable preserving property.\\
To prove $(i)$, it is enough to show that the following are provable in $G$:
\[
S \cdot (\bigwedge \limits_i \Ap S_{i} \Rightarrow \,) \quad (1)\; , \quad
S \cdot (\Apax S \Rightarrow \,) \quad (2)  \;, \quad S \cdot (\Apm S \Rightarrow \,) \quad (3).
\]
Sequent (3) is provable by induction hypothesis and the assumption that $G$ has MULIP. 
Sequent (2)  is proved in Lemma \ref{Axiom}. For the sequent (1), assume that the rule $R$ of $\GthW$ is backward applicable 
to $S$, i.e., the premises of $R$ are $S_i$'s and its conclusion $S$.
As $S_i$'s are lower than $S$, by induction hypothesis we have $S_i \cdot (\Ap S_{i} \Rightarrow \,)$. Therefore, by weakening, we have $S_i \cdot (\{\Ap S_{i}\}_i \Rightarrow \, )$. Since any rule in $\GthW$ has the context extension property, we can add $\{\Ap S_{i}\}_i$ to the antecedent of both premises and conclusion and by the rule itself, we have $S \cdot (\{\Ap S_{i}\}_i \Rightarrow \, )$ and hence $S \cdot (\bigwedge_i \Ap S_{i} \Rightarrow \, )$.\\
For $(ii)$, we use 
induction on the length of the proof of $S \cdot (\bar{C} \Rightarrow \bar{D})$. 
Let $S \cdot (\bar{C} \Rightarrow \bar{D})$ be 
derivable in $G$. If it is an axiom, 
we have $\bar{C} \Rightarrow \bar{D}, \Apax S$ by Lemma \ref{Axiom}, and hence $\bar{C} \Rightarrow \bar{D}, \Ap S$. If the last rule is a rule in $\GthW$ of the form:
\begin{center}
 \begin{tabular}{c c}
 \AxiomC{$\{ \Gamma, \bar{\phi}_i \Rightarrow \bar{\psi}_i, \Delta \}_i$}
 \UnaryInfC{$\Gamma, \phi \Rightarrow \Delta$}
 \DisplayProof,
\end{tabular}
\end{center}
then there are two cases to consider, i.e., either $\phi \in \bar{C}$ or $\phi \in S^a$. If $\phi \in \bar{C}$, then set $\bar{C}'=\bar{C}-\{\phi\}$. Since $\phi \in \bar{C}$, it is $p^{\circ}$-free by the assumption and $\phi_i$'s are all $p^{\circ}$-free and $\psi_i$'s are all $p^{\diamond}$-free by the variable preserving property. By induction hypothesis, as $(\bar{C}', \bar{\phi}_i \Rightarrow \bar{\psi}_i, \bar{D})$ is $p^{\diamond}$-free and $S \cdot (\bar{C}', \bar{\phi}_i \Rightarrow \bar{\psi}_i, \bar{D})$ has a shorter proof, we have $\bar{C}', \bar{\phi}_i \Rightarrow \Ap S,  \bar{\psi}_i, \bar{D}$. By using the rule itself, we have  
\begin{center}
\begin{tabular}{c c c}
 \AxiomC{$ \{ \bar{C}', \bar{\phi}_i \Rightarrow \bar{\psi}_i, \Ap S, \bar{D} \}_i$} 
 \UnaryInfC{$\bar{C}', \phi \Rightarrow \Ap S, \bar{D}$}
 \DisplayProof
\end{tabular}
\end{center}
which implies $\bar{C} \Rightarrow \Ap S, \bar{D}$.\\
If $\phi \notin \bar{C}$, then both $\bar{C}$ and $\bar{D}$ 
do not 
contain any active formula of the rule and hence the last rule is in form
\begin{center}
\begin{tabular}{c c c}
 \AxiomC{$ \{ \bar{C}, \Gamma, \bar{\phi}_i \Rightarrow \bar{\psi}_i, \bar{D}, \Delta \}_i$} 
 \UnaryInfC{$\bar{C}, \Gamma, \phi \Rightarrow \bar{D}, \Delta$}
 \DisplayProof.
\end{tabular}
\end{center}
By context restriction property, if we erase $\bar{C}$ and $\bar{D}$ both on the premises and the consequence of the last rule, the rule remains valid and it changes to:
\begin{center}
\begin{tabular}{c c c}
 \AxiomC{$ \{ \Gamma, \bar{\phi}_i \Rightarrow \bar{\psi}_i, \Delta \}_i$} 
 \UnaryInfC{$ \Gamma, \phi \Rightarrow \Delta$}
 \DisplayProof.
\end{tabular}
\end{center}
Therefore, the rule is backward applicable to 
$S=(\Gamma, \phi \Rightarrow \Delta)$. Set $S_i=(\Gamma, \bar{\phi}_i \Rightarrow \bar{\psi}_i, \Delta)$. As the weight of $S_i$'s are less than the weight of $S$ and $S_i \cdot (\bar{C} \Rightarrow \bar{D})$ are provable, by induction hypothesis, we have $\bar{C} \Rightarrow \Ap S_i, \bar{D}$. Hence, $\bar{C} \Rightarrow \bigwedge_i \Ap S_i, \bar{D}$ and as $\bigwedge_i \Ap S_i$ is a disjunct in $\Ap S$, we have $\bar{C} \Rightarrow \Ap S, \bar{D}$.\\
The case where the last rule is in $\mathbf{GW3}$ with its main formula in the antecedent 
is similar. For the modal or conditional rules, by induction hypothesis $\mathcal{U}^{\circ}_p(S)$ 
and the assumption that $G$ has 
MULIP, 
we get that $\Apm S$ is a uniform $\A^{\circ}_p$-interpolant for $S$ with respect to the set of 
modal or conditional rules of $G$. By $(ii')$ in Definition \ref{DfnUniformInterpolationSeq}, this gives $\bar{C} \Rightarrow \Apm S, \bar{D}$ and hence $\bar{C} \Rightarrow \Ap S, \bar{D}$.
\end{proof}

\subsection{Uniform Lyndon Interpolation} \label{ULIPSubsection}

In this subsection, for the following choices of the system $G$, we show that it has MULIP. Therefore,  
by Theorem \ref{GeneralArgument} and Theorem \ref{SequentImpliesLogic}, we will have:
\begin{theorem} The logics $\mathsf{E}$, $\mathsf{M}$, $\mathsf{MC}$, $\mathsf{EN}$, $\mathsf{MN}$, $\mathsf{K}$, their conditional versions $\mathsf{CE}$, $\mathsf{CM}$, $\mathsf{CMC}$, $\mathsf{CEN}$, $\mathsf{CMN}$, $\mathsf{CK}$, and the conditional logic $\mathsf{CKID}$ have ULIP and hence UIP and LIP.
\end{theorem}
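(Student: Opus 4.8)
The plan is to push everything back onto the notion of MULIP from Definition~\ref{DfnMULIP}. By Theorem~\ref{GeneralArgument}, a sequent calculus with MULIP has ULIP; by Theorem~\ref{SequentImpliesLogic} ULIP passes from the calculus to its logic; and ULIP implies UIP and LIP by the first theorem of Section~\ref{Preliminaries}. So it suffices to prove that each of the thirteen calculi $\mathbf{GE}$, $\mathbf{GM}$, $\mathbf{GMC}$, $\mathbf{GEN}$, $\mathbf{GMN}$, $\mathbf{GMCN}$ and their conditional counterparts $\GCE$, $\GCM$, $\GCMC$, $\GCEN$, $\GCMN$, $\GCMCN$ ($=\mathbf{GCK}$), together with $\mathbf{GCKID}$, has MULIP. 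These calculi share the propositional core $\GthW$ and differ only in a finite set $\mathcal M$ of modal/conditional rules, all of which are context-free (the extra context having been absorbed into the weakening rules) and have a single principal formula in the succedent of the conclusion; so the arguments are uniform, and I would organise them by the shape of $\mathcal M$: the $(E)$/$(CE)$-type rules, the $(M)$/$(CM)$-type rules, the "collecting" $(MC)$/$(CMC)$-type rules, the $(N)$/$(CN)$ additions, and finally the combined $(CKID)$ rule.

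Fix such a $G$, an atom $p$, and $\circ\in\{+,-\}$, and assume $\mathcal U^{\circ}_p(S)$, i.e.\ every sequent lower than $S$ already has a uniform $\A^{\varepsilon}_p$-interpolant for both $\varepsilon \in \{+,-\}$. I would define $\Apm S$ by cases. If $S$ is provable in $G$, put $\Apm S = \top$; then $(i)$ holds because $S\cdot(\top\Rightarrow\,)$ is $S$ with one weakening, and $(ii')$ is immediate. If $S$ is not provable and the boxed/conditional formulas of $S$ cannot be matched, by any splitting into $S$ and a context, against the conclusion of a rule in $\mathcal M$ (which happens as soon as $S$ carries a top-level formula that is neither boxed nor a conditional, or the multiplicities of its boxed/conditional formulas do not fit any $\mathcal M$-conclusion), put $\Apm S = \bot$; then $(var)$ and $(i)$ hold ($S\cdot(\bot\Rightarrow\,)$ is an instance of $L\bot$) and $(ii')$ is vacuous. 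In the remaining "modal-shape" case, say $S=(\Box\gamma_1,\dots,\Box\gamma_k \Rightarrow \Box\delta)$ or $S=(\Box\gamma_1,\dots,\Box\gamma_k \Rightarrow)$ for the modal calculi and analogously $S=(\gamma_1 \triangleright \delta_1,\dots,\gamma_k \triangleright \delta_k \Rightarrow \gamma_0 \triangleright \delta_0)$ (or with empty succedent) for the conditional ones, I would take $\Apm S$ to be an explicit Boolean combination of formulas $\Box\vartheta$ (resp.\ $\vartheta \triangleright \vartheta'$) in which each $\vartheta,\vartheta'$ is a uniform $\A^{\varepsilon}_p$-interpolant, the polarity $\varepsilon$ chosen so that the whole formula comes out $p^{\circ}$-free, of a sequent assembled from the $\gamma_i$'s and $\delta_j$'s exactly as the premises of the relevant $\mathcal M$-rule prescribe; for instance, for $S=(\Box\gamma_1,\dots,\Box\gamma_k \Rightarrow \Box\delta)$ in $\mathbf{GMC}$ or $\mathbf{GMCN}$ one may take $\Apm S = \Box\,\Ap(\gamma_1,\dots,\gamma_k \Rightarrow \delta)$, which is legitimate because $(\gamma_1,\dots,\gamma_k \Rightarrow \delta)$ is lower than $S$. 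By construction all these ingredient sequents are lower than $S$, so their interpolants exist by $\mathcal U^{\circ}_p(S)$.

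With $\Apm S$ so defined, $(var)$ is read off from the $V^{\dagger}$-bounds on the ingredient interpolants, the variable preserving property of the $\mathcal M$-rules, and the choice of the polarities $\varepsilon$; $(i)$, i.e.\ derivability of $S\cdot(\Apm S \Rightarrow\,)$, is obtained by applying the $\mathcal M$-rules (and propositional rules, for the Boolean combination) to the derivations witnessing condition $(i)$ of the ingredient interpolants; and $(ii')$ is the substantive point: given a derivation of $S\cdot(\bar C \Rightarrow \bar D)$ whose last inference lies in $\mathcal M$, one determines how its principal and active formulas are distributed between $S$ and the context $\bar C \Rightarrow \bar D$, notes (via the variable preserving property of that inference together with $p\notin V^{\diamond}(\bar C \Rightarrow \bar D)$) that the relevant premises are $p^{\diamond}$-free, feeds them into condition $(ii)$ of the appropriate ingredient interpolant, and reassembles $\bar C \Rightarrow \Apm S, \bar D$ with one further application of the $\mathcal M$-rule. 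For $\mathbf{GCKID}$ one additionally has to carry the extra antecedent formula $\gamma_0$ occurring in the right premise of $(CKID)$ through this argument.

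The step I expect to cost the most is $(ii')$ for the rules that collapse arbitrarily many modal or conditional formulas into one — $(MC)$, $(CMC)$ and $(CKID)$ — since there the principal formulas of the last inference may be split between $S$ and $\bar C$ in every possible way, and, for the conditional rules, both the antecedent and the succedent of a conditional feed, with opposite polarities, into $V^{+}$ and $V^{-}$; keeping the polarity bookkeeping straight — deciding, for each ingredient sequent, whether to take its $\A^{+}_p$- or its $\A^{-}_p$-interpolant, and, in the cases without any modal structure ($\mathsf E$, $\mathsf{EN}$, $\mathsf{CE}$, $\mathsf{CEN}$), taking the interpolant \emph{inside} the box/conditional in exactly the right polarity — is precisely what distinguishes the Lyndon version from plain UIP and is the delicate part. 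Once the construction is set up to respect polarities, though, no idea beyond the plain-UIP proof is needed, which is exactly the phenomenon advertised in the introduction.
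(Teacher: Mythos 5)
Your overall architecture is exactly the paper's: reduce everything to MULIP for each calculus via Theorems \ref{GeneralArgument} and \ref{SequentImpliesLogic}, and define $\Apm S$ as $\top$ when $S$ is provable, $\bot$ when $S$ does not match the conclusion shape of any rule in $\mathcal M$, and otherwise as a Boolean/modal combination of interpolants of the lower "premise-shaped" sequents. For the monotone rules $(M)$, $(MC)$, $(N)$ this recipe is complete and matches the paper's Subsections on $\mathsf M$, $\mathsf{MN}$, $\mathsf{MC}$, $\mathsf K$.

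There is, however, a genuine gap for the congruence-style positions, i.e.\ the rule $(E)$ and the antecedent slot of every conditional rule $(CE)$, $(CM)$, $(CMC)$, $(CKID)$. Your plan for condition $(i)$ is to "apply the $\mathcal M$-rules to the derivations witnessing condition $(i)$ of the ingredient interpolants", but those rules demand a provable \emph{equivalence} in their premises, and property $(i)$ of an ingredient interpolant only yields one direction. Concretely, for $S=(\,\Rightarrow\Box\psi)$ in $\mathbf{GE}$ with $S''=(\,\Rightarrow\psi)$, you get $\Ap S''\Rightarrow\psi$ from $(i)$, but to apply $(E)$ and conclude $\Box\Ap S''\Rightarrow\Box\psi$ you also need $\psi\Rightarrow\Ap S''$, which fails in general (property $(ii)$ only supplies it when the antecedent is $p^{\diamond}$-free, and $\psi$ need not be). The same problem occurs for the antecedents $\phi_i$ of conditionals, where moreover for $(CMC)$ and $(CKID)$ one needs a \emph{single} $p^{\circ}$-free formula $\chi$ simultaneously equivalent to all the $\phi_k$. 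The paper's fix is to make the definition of $\Apm S$ conditional on these provable-equivalence side conditions (e.g.\ "if $G\vdash\Ap S''\Leftrightarrow\psi$, set $\Apm S=\Box\Ap S''$, else $\Apm S=\bot$"), so that $(i)$ holds by fiat in the fallback case; the real work then migrates to $(ii')$, where one must \emph{prove} that the guard is satisfied whenever the last inference is the congruence rule and the context is $p^{\diamond}$-free, by running property $(ii)$ of the ingredient interpolant against the context formula and cutting. Relatedly, you locate the delicacy of $\mathsf E$, $\mathsf{EN}$, $\mathsf{CE}$, $\mathsf{CEN}$ in the polarity of the formula placed inside the box or conditional, but the actual obstacle there is this equivalence requirement, not polarity; without the guarded definition your construction does not satisfy $(i)$ for these seven calculi.
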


\subsubsection{Modal logics $\mathsf{M}$ and $\mathsf{MN}$}\label{Subsecionmmn}
Let $G$ be either $\mathbf{GM}$ or $\mathbf{GMN}$. We will show that $G$ has MULIP. 
To define $\Apm S$, if $\neg \, \mathcal{U}^{\circ}_p(S)$, define $\Apm S$ as $\bot$. If $ \mathcal{U}^{\circ}_p(S)$, (i.e., for any sequent $T$ lower than $S$ a 
uniform $\A^{\circ}_p$-interpolant, denoted by $\Ap T$, exists), define $\Apm S$ in the following way: if $S$ is provable, define 
it as $\top$, otherwise, if it is of the form $(\Box \phi \Rightarrow \,)$, define $\Apm S=\neg \Box \neg \Ap S'$, where $S'=(\phi \Rightarrow \,)$, if $S$ is of the form $(\, \Rightarrow \Box \psi)$, define $\Apm S= \Box \Ap S''$, where $S''=(\Rightarrow \psi)$, and otherwise, define $\Apm S=\bot$. Note that $\Apm S$ is well-defined as we have $ \mathcal{U}^{\circ}_p(S)$ and $S'$ 
and $S''$ 
are lower than $S$.\\ 
To show that $G$ has MULIP, 
we assume $ \mathcal{U}^{\circ}_{p}(S)$ to prove the three conditions $(var)$, $(i)$ and $(ii')$ in Definition \ref{DfnUniformInterpolationSeq} for $\Apm S$. First, note that using  $\mathcal{U}^{\circ}_p(S)$ on $(\phi \Rightarrow \,)$ and $(\, \Rightarrow \psi)$ that are lower than $(\Box \phi \Rightarrow \,)$ and $(\, \Rightarrow \Box \psi)$, respectively, the variable conditions are implied from $(var)$ for $S'$ and $S''$, respectively.\\
For $(i)$, if $S$ is provable, there is nothing to prove. 
Otherwise, if $S=(\Box \phi \Rightarrow \,)$ 
then $\Apm S=\neg \Box \neg \Ap S'$. 
As $S'$ is lower than $S$, we have $(\phi, \Ap S' \Rightarrow \,)$ by $\mathcal{U}^{\circ}_p(S)$, which implies $(\phi \Rightarrow \neg \Ap S')$. Using the rule $(M)$, we get $(\Box \phi \Rightarrow \Box \neg \Ap S')$, which is equivalent to $(\Box \phi, \neg \Box \neg \Ap S' \Rightarrow \,)$. Hence, $S \cdot  (\Apm S \Rightarrow \,)$ is provable.\\
If $S$ is not provable and $S=(\, \Rightarrow \Box \psi)$, 
we have $\Apm S=\Box \Ap S''$. Using $ \mathcal{U}^{\circ}_p(S)$ on $S''$ and the fact that $S''$ is lower than $S$, we have $(\Ap S'' \Rightarrow \psi)$ and by the rule $(M)$, we can show that $S \cdot (\Box \Ap S'' \Rightarrow \,)$ is provable in $G$. 
If $S$ is not provable and has none of the mentioned forms, as $\Apm S=\bot$, there is nothing to prove.\\
For $(ii')$, let $S \cdot (\bar{C} \Rightarrow \bar{D})$ be derivable in $G$ for a $p^{\diamond}$-free sequent $\bar{C} \Rightarrow \bar{D}$ and the last rule is a modal rule.
We want to show that $\bar{C} \Rightarrow \Apm S, \bar{D}$ is derivable in $G$. If the last rule used in the proof of $S \cdot (\bar{C} \Rightarrow \bar{D})$ is $(M)$, the sequent must have the form $(\Box \phi \Rightarrow \Box \psi)$ and the rule must be in form:
\begin{center}
\begin{tabular}{c}
 \AxiomC{$\phi \Rightarrow \psi$} 
   \RightLabel{\footnotesize$M$} 
 \UnaryInfC{$\Box \phi \Rightarrow \Box \psi$}
 \DisplayProof
\end{tabular}
\end{center}
If $S$ is provable, as $\Apm S=\top$, we clearly have $\bar{C} \Rightarrow \Apm S, \bar{D}$. 
Assume $S$ is not provable and hence $\bar{C} \cup \bar{D}$ cannot be empty. Therefore, there are three cases to consider, either $\bar{C}$ is $\Box \phi$ or $\bar{D}$ is $\Box \psi$ or both. 
First, if $\bar{C}=\Box \phi $ and $\bar{D}=\varnothing$, then, 
$S=(\, \Rightarrow \Box \psi)$ and $\phi$ is $p^{\circ}$-free. Set $S''=(\, \Rightarrow \psi)$. 
Then $\Apm S=\Box \forall p S''$. As $S''$ is lower than $S$, by $\mathcal{U}^{\circ}_p(S)$ we have $(\phi \Rightarrow \Ap S'')$. Using the modal rule $(M)$, we have $(\Box \phi \Rightarrow \Box \Ap S'')$ and hence $(\bar{C} \Rightarrow \Apm S, \bar{D})$.\\
In the second case, assume $\bar{C}=\varnothing$ and $\bar{D}=\Box \psi$. Hence, $S=(\Box \phi \Rightarrow \,)$ and $\psi$ is $p^{\diamond}$-free. Set $S'=(\phi \Rightarrow \,)$. 
Hence, $\Apm S=\neg \Box \neg \Ap S'$. Since $(\phi \Rightarrow \psi)$ is provable in $G$ and $S'$ is lower than $S$, by $\mathcal{U}^{\circ}_p$ we have $(\, \Rightarrow \Ap S', \psi)$, or equivalently $(\neg \Ap S' \Rightarrow \psi)$. Using the 
rule $(M)$, we get $(\Box \neg \Ap S' \Rightarrow \Box \psi)$ or equivalently $(\Rightarrow \neg \Box \neg \Ap S', \Box \psi)$. Therefore, we have $(\, \Rightarrow \Apm S, \Box \psi)$ or $(\bar{C} \Rightarrow \Apm S, \bar{D})$. \\
In the third case, if $\bar{C}=\Box \phi $ and $\bar{D}=\Box \psi$, then $S$ is the empty sequent and $\bar{C} \Rightarrow \bar{D}$ is provable. Hence, $\bar{C} \Rightarrow \Apm S, \bar{D}$ is also provable.\\
For the case $G=\mathbf{GMN}$, if $S \cdot (\bar{C} \Rightarrow \bar{D})=(\Rightarrow \Box \psi)$ is proved by the rule $(N)$, it must have the following form:
\begin{center}
\begin{tabular}{c}
 \AxiomC{$ \Rightarrow \psi$} 
   \RightLabel{\footnotesize $N$} 
 \UnaryInfC{$ \Rightarrow \Box \psi$}
 \DisplayProof
\end{tabular}
\end{center}
Then $\bar{C}= \varnothing$ and there are two cases to consider. The first case is when $S=(\Rightarrow \Box \psi)$ and $\bar{D}=\varnothing$. Then, it means that $S$ is provable which contradicts our assumption. The second case is when $S= (\, \Rightarrow )$ and $\bar{D}= \Box \psi$. Hence, $\bar{C} \Rightarrow \bar{D}$ is provable and we have the provability of $\bar{C} \Rightarrow \Apm S, \bar{D}$ in $G$.

\subsubsection{Modal logics $\mathsf{MC}$ and $\mathsf{K}$}
Let $G$ be either $\mathbf{GMC}$ or $\mathbf{GK}$. We will show that $G$ has MULIP. Similar to the argument of the previous subsection, to define $\Apm S$, if  $\neg \, \mathcal{U}^{\circ}_p(S)$, define $\Apm S$ as $\bot$. If $\mathcal{U}^{\circ}_p(S) $, (i.e., for any sequent $T$ lower than $S$ the uniform $\A^{\circ}_p$-interpolant, denoted by $\Ap T$, exists), define $\Apm S$ as the following: if $S$ is provable, define $\Apm S=\top$. Otherwise, if $S$ is of the form $(\Box \phi_1, \cdots, \Box \phi_i \Rightarrow \,)$, for some $i \geq 1$, define $\Apm S=\neg \Box \neg \Ap S'$, where $S'=(\phi_1, \cdots, \phi_i \Rightarrow \,)$. If $S$ is of the form $(\, \Rightarrow \Box \psi)$, define $\Apm S=\Box \Ap S''$, where $S''=(\, \Rightarrow \psi)$. If $S$ is of the form $(\Box \phi_1, \cdots, \Box \phi_i \Rightarrow \Box \psi)$, for some $i \geq 1$, define $\Apm S=\Box \Ap S''$, where $S''=(\phi_1, \cdots, \phi_i \Rightarrow \psi)$. Otherwise, define $\Apm S=\bot$. Note that $\Apm S$ is well-defined as we assumed $\mathcal{U}^{\circ}_p(S)$ and in each case $S'$ or $S''$ are lower than $S$.\\
To show that $G$ has MULIP, 
we assume $\mathcal{U}^{\circ}_p(S)$ to prove the three conditions $(var)$, $(i)$ and $(ii')$ in Definition \ref{DfnUniformInterpolationSeq} for $\Apm S$. The condition $(var)$ is an immediate consequence of $\mathcal{U}^{\circ}_p(S)$ and the fact that $S'$ or $S''$ are lower than $S$. For $(i)$, if $S$ is provable, there is nothing to prove. If $S$ is of the form $(\Box \phi_1, \cdots, \Box \phi_i \Rightarrow \,)$ and $\Apm S=\neg \Box \neg \Ap S'$, where $S'=(\phi_1, \cdots, \phi_i \Rightarrow \,)$, as $S'$ is lower than $S$, by $\mathcal{U}^{\circ}_p(S)$ we have $(\phi_1, \cdots, \phi_i, \Ap S' \Rightarrow)$ or equivalently $(\phi_1, \cdots, \phi_i \Rightarrow \neg \Ap S')$. Using the rule $(MC)$, we get $(\Box \phi_1, \cdots, \Box \phi_i \Rightarrow \Box \neg \Ap S')$, which is equivalent to $(\Box \phi_1, \cdots, \Box \phi_i , \neg \Box \neg \Ap S' \Rightarrow \,)$ and hence $S \cdot (\Apm S \Rightarrow \,)$.\\
If $S$ is of the form $(\, \Rightarrow \Box \psi)$ and $S''=(\, \Rightarrow \psi)$, or $S$ is of the form $S=(\Box \phi_1, \cdots, \Box \phi_i \Rightarrow \Box \psi)$, for some $i \geq 1$ and $S''$ is of the form $(\phi_1, \cdots, \phi_i \Rightarrow \psi)$, we have $\Apm S=\Box \Ap S''$. In both cases, using $\mathcal{U}^{\circ}_p(S)$ on $S''$, we have either $\Ap S'' \Rightarrow \psi$ or $\phi_1, \cdots, \phi_i, \Ap S'' \Rightarrow \psi$, respectively. In both cases, using the rule $(MC)$, we can show that $S \cdot (\Box \Ap S'' \Rightarrow \,)$ is provable and hence $S \cdot (\Apm S \Rightarrow \,)$.\\
For $(ii')$, let $S \cdot (\bar{C} \Rightarrow \bar{D})$ be derivable in $G$ and the last rule is the modal rule $(MC)$, for a $p^{\diamond}$-free sequent $\bar{C} \Rightarrow \bar{D}$. We want to show that $\bar{C} \Rightarrow \Apm S, \bar{D}$ is derivable in $G$. If $S$ is provable, as $\Apm S=\top$, we have $\bar{C} \Rightarrow \Apm S, \bar{D}$. Therefore, we assume that $S$ is not provable.
As the last rule used in the proof of $S \cdot (\bar{C} \Rightarrow \bar{D})$ is $(MC)$, the sequent must have the form $(\Box \phi_1 , \cdots , \Box \phi_n \Rightarrow \Box \psi)$ and the rule is:
\begin{center}
\begin{tabular}{c}
 \AxiomC{$\phi_1, \cdots, \phi_n \Rightarrow \psi$} 
   \RightLabel{\footnotesize$MC$} 
 \UnaryInfC{$\Box \phi_1, \cdots, \Box \phi_n \Rightarrow \Box \psi$}
 \DisplayProof
\end{tabular}
\end{center}
Then, there are two cases to consider, either $\bar{D}=\Box \psi$ or $\bar{D}=\varnothing$. First, assume $S$ is of the form $(\Box \phi_1 , \cdots , \Box \phi_i \Rightarrow \,)$, for $i \leq n$, then $\bar{C}=\Box \phi_{i+1}, \cdots , \Box \phi_n$ and $\bar{D}=\Box \psi$ and hence $\phi_{i+1}, \cdots, \phi_n \Rightarrow \psi$ is $p^{\diamond}$-free. Set $S'=(\phi_1, \cdots , \phi_i \Rightarrow \,)$. By the form of $S$, we have $\Apm S=\neg \Box \neg \Ap S'$. As $S'$ is lower than $S$, by $\mathcal{U}^{\circ}_p(S)$, we have $(\phi_{i+1}, \cdots , \phi_n \Rightarrow \Ap S', \psi)$. Hence, by moving $\Ap S'$ to the left, applying the rule $(MC)$ and moving back, we have $(\Box \phi_{i+1}, \cdots , \Box \phi_n \Rightarrow \neg \Box \neg \Ap S', \Box \psi)$ or equivalently $(\bar{C} \Rightarrow \Apm S, \bar{D})$.\\
If $S$ is of the form $\Box \phi_1, \cdots, \Box \phi_i \Rightarrow \Box \psi$, for some $i \leq n$, we must have $\bar{C}= \Box \phi_{i+1}, \cdots, \Box \phi_n$ and $\bar{D}=\varnothing$. Hence, $\phi_{i+1}, \cdots, \phi_n$ are $p^{\circ}$-free. Note that $i < n$, because if $i=n$, then $S$ will be provable that contradicts our assumption. Set $S''=(\phi_1, \cdots, \phi_i \Rightarrow \psi)$. As $S''$ is lower than $S$, by $\mathcal{U}^{\circ}_p(S)$ we have $ \phi_{i+1}, \cdots, \phi_n \Rightarrow \Ap S''$. By the fact that $i < n$, we can apply the rule $(MC)$ to prove $\Box \phi_{i+1}, \cdots, \Box \phi_n \Rightarrow \Box \Ap S''$ and hence $(\bar{C} \Rightarrow \Apm S, \bar{D})$.\\

\noindent The case in which $G=\mathbf{GMK}$ and the last rule is $(N)$ is similar to the case $G = \mathbf{GMN}$ where the last rule is $(N)$, as discussed in Subsubsection \ref{Subsecionmmn}.

\subsubsection{Modal logics $\mathsf{E}$ and $\mathsf{EN}$}

Let $G$ be $\mathbf{GE}$ or $\mathbf{GEN}$. Similar to the argument of the previous subsection, 
if  $\neg \, \mathcal{U}^{\circ}_p(S)$, define $\Apm S$ as $\bot$. If $\mathcal{U}^{\circ}_p(S) $, 
then: if $S$ is provable in $G$, define $\Apm S=\top$. Otherwise, if 
$S=(\Box \phi \Rightarrow \,)$ and 
both $(\neg \Ap S' \Rightarrow \phi)$ and $(\phi \Rightarrow \neg \Ap S')$ are provable in $G$, 
define $\Apm S=\neg \Box \neg \Ap S'$ for $S'= (\phi \Rightarrow \,)$. If $S$ has the form $(\, \Rightarrow \Box \psi)$ and 
both $(\Ap S'' \Rightarrow \psi)$ and $(\psi \Rightarrow \Ap S'')$ are provable in $G$, 
define $\Apm S=\Box \Ap S''$ for $S''= (\, \Rightarrow \psi)$. Otherwise, define $\Apm S=\bot$. Note that $\Ap S$ is well-defined as 
$S'$ and $S''$ are lower than $S$ and we assumed $\mathcal{U}^{\circ}_p(S)$.\\
To show that $G$ has MULIP 
we assume $\mathcal{U}^{\circ}_p(S)$ to prove 
$(var)$, $(i)$ and $(ii')$ in Definition \ref{DfnUniformInterpolationSeq} for $\Apm S$. 
Condition $(var)$ is a 
consequence of $\mathcal{U}^{\circ}_p(S)$ and 
that $S'$ or $S''$ are lower than $S$. For $(i)$, if $S$ is provable, there is nothing to prove. If $S=(\Box \phi \Rightarrow \,)$ and $S'=(\phi \Rightarrow \,)$ and 
both $(\neg \Ap S' \Rightarrow \phi)$ and $(\phi \Rightarrow \neg \Ap S')$ are provable in $G$, then using the rule $(E)$, we have $(\Box \phi \Rightarrow \Box \neg \Ap S')$ which implies $(\Box \phi, \neg \Box \neg \Ap S' \Rightarrow \,)$ and hence $S \cdot (\Apm S \Rightarrow \, )$ is provable in $G$.\\
If $S=(\, \Rightarrow \Box \psi)$ and $S''=(\, \Rightarrow \psi)$ and 
both $( \Ap S'' \Rightarrow \psi)$ and $(\psi \Rightarrow  \Ap S'')$ are provable in $G$, then using the rule $(E)$, we have $(\Box \Ap S'' \Rightarrow \Box \psi)$ and hence $S \cdot (\Apm S \Rightarrow \, ) $ is provable in $G$. If $\Apm S=\bot$, there is nothing to prove.\\
For $(ii')$, if $S$ is provable, then $\Apm S=\top$ and hence $\bar{C} \Rightarrow \Apm S, \bar{D}$. 
Therefore, assume that $S$ is not provable. If the last rule used in the proof of $ S \cdot (\bar{C} \Rightarrow \bar{D})$ is the rule $(E)$, the sequent $ S \cdot (\bar{C} \Rightarrow \bar{D})$ is of the form $\Box \phi \Rightarrow \Box \psi$. There are four cases to consider based on if $\bar{C}$ or $\bar{D}$ are empty or not. First, if $\bar{C}=\bar{D}=\varnothing$, then $S$ is provable which contradicts our assumption. If $S$ is the empty sequent $(\, \Rightarrow \,)$, then $\bar{C} \Rightarrow \bar{D}$ is provable and hence $\bar{C} \Rightarrow \Apm S, \bar{D}$ is provable.\\
If 
$S=(\Box \phi \Rightarrow \,)$, then $\bar{C}=\varnothing$ and $\bar{D}=\Box \psi$ and hence $\psi$ is $p^{\diamond}$-free. Set $S'=(\phi \Rightarrow \,)$ and 
as the last rule is $(E)$, both 
$\phi \Rightarrow \psi$ and $\psi \Rightarrow \phi$ are provable. By $\mathcal{U}^{\circ}_p(S)$ and the fact that $S'$ is lower than $S$, we have $(\phi , \Ap S' \Rightarrow )$ or equivalently, $(\phi \Rightarrow \neg \Ap S')$. Again by $\mathcal{U}^{\circ}_p(S)$ for $S'$, the provability of $S' \cdot (\, \Rightarrow \bar{D})=(\phi \Rightarrow \psi)$ and the fact that $(\, \Rightarrow \psi)$ is $p^{\diamond}$-free, we have $(\, \Rightarrow \Ap S', \psi)$ or equivalently, $(\neg \Ap S' \Rightarrow \psi)$. Since $(\phi \Rightarrow \psi)$ and $(\psi \Rightarrow \phi)$ are provable, by cut we can prove the equivalence between $\phi$, $\psi$ and $\neg \Ap S'$. Using this fact, we have:
\begin{center}
\begin{tabular}{c c}
 \AxiomC{$\psi \Rightarrow \neg \Ap S'$} 
  \AxiomC{$ \neg \Ap S' \Rightarrow \psi$} 
   \RightLabel{\footnotesize$E$} 
 \BinaryInfC{$ \Box  \neg \Ap S' \Rightarrow \Box \psi$}
 \DisplayProof
\end{tabular}
\end{center}
Hence, $( \Rightarrow \neg \Box \neg \Ap S' , \Box \psi)$. Then, as 
$S=(\Box \phi \Rightarrow \, )$ and both 
$(\neg \Ap S' \Rightarrow \phi)$ and $(\phi \Rightarrow \neg \Ap S')$ are provable in $G$, by definition we have $\Apm S=\neg \Box  \neg \Ap S'$ and hence $(\, \Rightarrow  \neg \Box  \neg \Ap S', \Box \psi) = (\bar{C} \Rightarrow \Apm S, \bar{D})$ is provable in $G$. 
The last case where 
$S=(\, \Rightarrow \Box \psi)$ and $\bar{C}=\Box \phi$ and $\bar{D}=\varnothing$ is similar.\\
For the case $G=\mathbf{GEN}$, if $S \cdot (\bar{C} \Rightarrow \bar{D})=(\, \Rightarrow \Box \psi)$ is proved by the rule $(N)$, it must have the 
form
\begin{tabular}{c}
 \AxiomC{$ \Rightarrow \psi$} 
   \RightLabel{\footnotesize$N$} 
 \UnaryInfC{$ \Rightarrow \Box \psi$}
 \DisplayProof.
\end{tabular}
Then $\bar{C}= \varnothing$ and there are two cases. 
First, $S=(\Rightarrow \Box \psi)$ and $\bar{D}=\varnothing$, which means 
that $S$ is provable which contradicts our assumption. 
Second, if $S= (\, \Rightarrow )$ and $\bar{D}= \Box \psi$, 
and hence $\bar{C} \Rightarrow \bar{D}$ is provable, we have the provability of $\bar{C} \Rightarrow \Apm S, \bar{D}$ in $G$.

\subsubsection{Modal logics $\mathsf{CE}$ and $\mathsf{CEN}$} \label{CE}
Let $G$ be either $\mathbf{GCE}$ or $\mathbf{GCEN}$. We will show that $G$ has MULIP. To define $\Apm S$, if  $\neg \, \mathcal{U}^{\circ}_p(S)$, define $\Apm S$ as $\bot$. If $\mathcal{U}^{\circ}_p(S) $, define $\Apm S$ as the following: if $S$ is provable, define $\Apm S=\top$. Otherwise, if $S$ is of the form $S=(\phi_1 \triangleright \psi_1 \Rightarrow \,)$, define $T'=(\, \Rightarrow \phi_1)$ and $S'=(\psi_1 \Rightarrow \,)$. Then, if $G \vdash \Ap T' \Leftrightarrow \phi_1$ and $G \vdash \neg \Ap S' \Leftrightarrow \psi_1$, define $\Apm S=\neg (\Ap T' \triangleright  \neg \Ap S')$. If $S$ is of the form $S=(\, \Rightarrow \phi_0 \triangleright \psi_0)$, define
$T''=(\phi_0 \Rightarrow \,)$ and $S''=(\, \Rightarrow \psi_0)$. Then, if $G \vdash \neg \Ap T'' \Leftrightarrow \phi_0$ and $G \vdash \Ap S'' \Leftrightarrow \psi_0$, define $\Apm S=\neg \Ap T'' \triangleright \Ap S''$.  Otherwise, define $\Apm S=\bot$. Note that $\Apm S$ is well-defined as in each case $S'$, $T'$, $S''$ and $T''$ are lower than $S$, and we assumed $\mathcal{U}^{\circ}_p(S)$.\\
To show that $\mathbf{GCE}$ has MULIP, we assume $\mathcal{U}^{\circ}_p(S)$ to prove the three conditions $(var)$, $(i)$ and $(ii')$ in Definition \ref{DfnUniformInterpolationSeq} for $\Apm S$,  a uniform $\A^{\circ}_p$-interpolant for $S$ with respect to the set $\mathcal{M}$ of the conditional rules of $G$. The condition $(var)$ is an immediate consequence of the fact that $\mathcal{U}^{\circ}_p(S)$ holds and $S'$, $T'$, $S''$ and $T''$ are lower than $S$.\\ 
For $(i)$, if $S$ is provable in $G$, there is nothing to prove. Hence, assume that $S$ is not provable. If $S$ is of the form $S=(\phi_1 \triangleright \psi_1 \Rightarrow \,)$ and we have $G \vdash \Ap T' \Leftrightarrow \phi_1$ and $G \vdash \neg \Ap S' \Leftrightarrow \psi_1$, where $T'=(\, \Rightarrow \phi_1)$ and $S'=(\psi_1 \Rightarrow \,)$, then by definition, we have $\Apm S=\neg (\Ap T' \triangleright \neg \Ap S')$. As $G \vdash \Ap T' \Leftrightarrow \phi_1$ and $G \vdash \neg \Ap S' \Leftrightarrow \psi_1$, using the rule $(CE)$, we get $G \vdash (\phi_1 \triangleright \psi_1 \Rightarrow \Ap T' \triangleright \neg \Ap S')$, which is equivalent to $G \vdash (\phi_1 \triangleright \psi_1, \neg (\Ap T' \triangleright \neg \Ap S') \Rightarrow \,)$ and hence $G \vdash S \cdot (\Apm S \Rightarrow \,)$.\\
If $S$ is of the form $S=(\; \Rightarrow \phi_0 \triangleright \psi_0)$ and we have
$G \vdash \neg \Ap T'' \Leftrightarrow \phi_0$ and $G \vdash \Ap S'' \Leftrightarrow \psi_0$, where $T''=(\phi_0 \Rightarrow \,)$ and $S''=(\, \Rightarrow \psi_0)$, then by definition $\Apm S=\neg \Ap T'' \triangleright \Ap S''$. As $G \vdash \neg \Ap T'' \Leftrightarrow \phi_0$ and $G \vdash \Ap S'' \Leftrightarrow \psi_0$, by the rule $(CE)$, we can show that $G \vdash \neg \Ap T'' \triangleright \Ap S'' \Rightarrow \phi_0 \triangleright \psi_0$ and hence, $G \vdash S \cdot (\Apm S \Rightarrow \,)$.\\
For $(ii')$, let $S \cdot (\bar{C} \Rightarrow \bar{D})$ be derivable in $G$ and the last rule is $(CE)$, for a $p^{\diamond}$-free sequent $\bar{C} \Rightarrow \bar{D}$. We want to show that $\bar{C} \Rightarrow \Apm S, \bar{D}$ is derivable in $G$. If $S$ is provable, as $\Apm S=\top$, we have $\bar{C} \Rightarrow \Apm S, \bar{D}$. Therefore, we assume that $S$ is not provable.
As the last rule used in the proof of $S \cdot (\bar{C} \Rightarrow \bar{D})$ is $(CE)$, the sequent must have the form $(\phi_1 \triangleright \psi_1 \Rightarrow \phi_0 \triangleright \psi_0)$ and the rule is:
\begin{center}
\begin{tabular}{c}
\AxiomC{$\{\phi_0 \Rightarrow \phi_1 \;\;\; , \;\;\; \phi_1 \Rightarrow \phi_0\}$} 
 \AxiomC{$\{\psi_0 \Rightarrow \psi_1 \;\;\; , \;\;\; \psi_1 \Rightarrow \psi_0\}$} 
   \RightLabel{\footnotesize$CE$} 
 \BinaryInfC{$\phi_1 \triangleright \psi_1 \Rightarrow \phi_0 \triangleright \psi_0$}
 \DisplayProof
\end{tabular}
\end{center}
Then, there are two cases to consider, either $\bar{D}=\{\phi_0 \triangleright \psi_0\}$ or $\bar{D}=\varnothing$. First, assume that $\bar{D}=\{\phi_0 \triangleright \psi_0\}$. Then, if $\bar{C}=\{\phi_1 \triangleright \psi_1\}$, the sequent $(\bar{C} \Rightarrow \Apm S, \bar{D})$ is clearly provable. Hence, we assume that $\bar{C}=\varnothing$ which implies $S=(\phi_1 \triangleright \psi_1 \Rightarrow \,)$. As $\bar{D}=\{\phi_0 \triangleright \psi_0\}$, the formulas $\phi_0$ and $\psi_0$ are $p^{\circ}$-free and $p^{\diamond}$-free, respectively. Set $T'=(\, \Rightarrow \phi_1)$ and $S'=(\psi_1 \Rightarrow \,)$. By $\mathcal{U}^{\circ}_p(S)$ and the fact that $T'$ is lower than $S$, we know that $\forall^{\circ}p T'$ exists, $G \vdash \Ap T' \Rightarrow \phi_1$ and since $\phi_0$ is $p^{\circ}$-free and $G \vdash \phi_0 \Rightarrow \phi_1$, we have $G \vdash \phi_0 \Rightarrow \Ap T'$. As $G \vdash \phi_0 \Leftrightarrow \phi_1$, we have $G \vdash \phi_1 \Rightarrow \Ap T'$  and hence $G \vdash \Ap T' \Leftrightarrow \phi_1$. Again, by $\mathcal{U}^{\circ}_p(S)$ and the fact that $S'$ is lower than $S$, we know that $\forall^{\circ}p S'$ exists and $G \vdash (\Ap S', \psi_1 \Rightarrow \,)$ which implies $G \vdash \psi_1 \Rightarrow \neg \Ap S'$. Since $\psi_0$ is $p^{\diamond}$-free and $G \vdash \psi_1 \Rightarrow \psi_0$, we have $G \vdash (\, \Rightarrow \Ap S', \psi_0)$ which implies $G \vdash \neg \Ap S' \Rightarrow \psi_0$. As $G \vdash \psi_0 \Leftrightarrow \psi_1$, we have $G \vdash   \neg \Ap S' \Rightarrow \psi_1$  and hence $G \vdash \neg \Ap S' \Leftrightarrow \psi_1$. 
Now, by definition, we have $\Apm S=\neg (\Ap T' \triangleright \neg \Ap S')$. We have shown than $G \vdash \Ap T' \Leftrightarrow \phi_1$. As $G \vdash \phi_0 \Leftrightarrow \phi_1$, we have $G \vdash \Ap T' \Leftrightarrow \phi_0$. Similarly, we have shown that $G \vdash \neg \Ap S' \Leftrightarrow \psi_1$. As $G \vdash \psi_0 \Leftrightarrow \psi_1$, we have $G \vdash \neg \Ap S' \Leftrightarrow \psi_0$. Hence, by applying the rule $(CE)$, we have $G \vdash \Ap T' \triangleright \neg \Ap S' \Rightarrow \phi_0 \triangleright \psi_0$ or equivalently $G \vdash (\bar{C} \Rightarrow \Apm S, \bar{D})$.\\
For the second case, let $\bar{D}=\varnothing$. If $\bar{C}=\varnothing$, the sequent $S$ is provable which is a contradiction. Hence, $\bar{C}=\{\phi_1 \triangleright \psi_1\}$ which implies that $S=(\; \Rightarrow \phi_0 \triangleright \psi_0)$. 
Since $\bar{C}=\{\phi_1 \triangleright \psi_1\}$, the formulas $\phi_1$ and $\psi_1$ are $p^{\diamond}$-free and $p^{\circ}$-free, respectively. Set $T''=(\phi_0 \Rightarrow \,)$ and $S''=(\, \Rightarrow \psi_0)$. Then, by $\mathcal{U}^{\circ}_p(S)$ and the fact that $T''$ is lower than $S$, we know that $\forall^{\circ}p T''$ exists and $G \vdash (\phi_0, \Ap T'' \Rightarrow \,)$ which implies $G \vdash \phi_0 \Rightarrow \neg \Ap T''$. As $G \vdash \phi_0 \Rightarrow \phi_{1}$ and $\phi_{1}$ is $p^{\diamond}$-free, we have $G \vdash (\, \Rightarrow \Ap T'', \phi_{1})$ which implies $G \vdash \neg \Ap T'' \Rightarrow \phi_1$. As $G \vdash \phi_0 \Leftrightarrow \phi_1$, we have $G \vdash \neg \Ap T'' \Leftrightarrow \phi_0$. Again, by $\mathcal{U}^{\circ}_p(S)$ and the fact that $S''$ is lower than $S$, we know that $\forall^{\circ}p S''$ exists and $G \vdash (\, \Ap T'' \Rightarrow \psi_0)$. As $G \vdash \psi_1 \Rightarrow \psi_{0}$ and $\psi_{1}$ is $p^{\circ}$-free, we have $G \vdash (\psi_1 \Rightarrow \Ap S'')$. As $G \vdash \psi_0 \Leftrightarrow \psi_1$, we have $G \vdash \Ap S'' \Leftrightarrow \psi_0$.
Now, by definition, we have $\Apm S=\neg \Ap T'' \triangleright \Ap S''$. 
We have shown that $G \vdash \neg \Ap T'' \Leftrightarrow \phi_0$. As $G \vdash \phi_0 \Leftrightarrow \phi_1$, we have $G \vdash \neg \Ap T'' \Leftrightarrow \phi_1$. Similarly, we have shown that $G \vdash \Ap S'' \Leftrightarrow \psi_0$. As $G \vdash \psi_0 \Leftrightarrow \psi_1$, we have $G \vdash \Ap S'' \Leftrightarrow \psi_1$. Hence, by applying the rule $(CE)$, we have $G \vdash \phi_1 \triangleright \psi_1 \Rightarrow \neg \Ap T'' \triangleright \Ap S''$ or equivalently $G \vdash (\bar{C} \Rightarrow \Apm S, \bar{D})$. This concludes the case $G=\mathbf{GCE}$.\\

For the case $G=\mathbf{GCEN}$, if $S \cdot (\bar{C} \Rightarrow \bar{D})=(\; \Rightarrow \phi_0 \triangleright \psi_0)$ is proved by the rule $(CN)$, it must have the following form:
\begin{center}
\begin{tabular}{c}
 \AxiomC{$ \Rightarrow \psi_0$} 
   \RightLabel{\footnotesize $CN$} 
 \UnaryInfC{$ \Rightarrow \phi_0 \triangleright \psi_0$}
 \DisplayProof
\end{tabular}
\end{center}
Then $\bar{C}= \varnothing$ and there are two cases to consider. The first case is when $S=(\Rightarrow \phi_0 \triangleright \psi_0)$ and $\bar{D}=\varnothing$. Then, it means that $S$ is provable which contradicts our assumption. The second case is when $S= (\, \Rightarrow )$ and $\bar{D}= \phi_0 \triangleright \psi_0$. Hence, $\bar{C} \Rightarrow \bar{D}$ is provable and we have the provability of $\bar{C} \Rightarrow \Apm S, \bar{D}$ in $G$, by the weakening rule.

\subsubsection{Conditional logics $\mathsf{CM}$ and $\mathsf{CMN}$}
Let $G$ be either $\mathbf{GCM}$ or $\mathbf{GCMN}$. We will show that $G$ has MULIP. To define $\Apm S$, if  $\neg \, \mathcal{U}^{\circ}_p(S)$, define $\Apm S$ as $\bot$. If $\mathcal{U}^{\circ}_p(S) $, define $\Apm S$ as the following: if $S$ is provable, define $\Apm S=\top$. Otherwise, if $S$ is of the form $S=(\phi_1 \triangleright \psi_1 \Rightarrow \,)$, define $T'=(\, \Rightarrow \phi_1)$ and $S'=(\psi_1 \Rightarrow \,)$. Then, if $G \vdash \Ap T' \Leftrightarrow \phi_1$, define $\Apm S=\neg (\Ap T' \triangleright  \neg \Ap S')$. If $S$ is of the form $S=(\, \Rightarrow \phi_0 \triangleright \psi_0)$, define
$T''=(\phi_0 \Rightarrow \,)$ and $S''=(\, \Rightarrow \psi_0)$. Then, if $G \vdash \neg \Ap T'' \Leftrightarrow \phi_0$, define $\Apm S=\neg \Ap T'' \triangleright \Ap S''$.  Otherwise, define $\Apm S=\bot$. Note that $\Apm S$ is well-defined as in each case $S'$, $T'$, $S''$ and $T''$ are lower than $S$ and we assumed $\mathcal{U}^{\circ}_p(S)$.\\
To show that $G$ has MULIP, we assume $\mathcal{U}^{\circ}_p(S)$ to prove the three conditions $(var)$, $(i)$ and $(ii')$ in Definition \ref{DfnUniformInterpolationSeq} for $\Apm S$. The condition $(var)$ is an immediate consequence of the fact that $\mathcal{U}^{\circ}_p(S)$ holds and $S'$, $T'$, $S''$ and $T''$ are lower than $S$.\\
For $(i)$, if $S$ is provable in $G$, there is nothing to prove. Hence, assume that $S$ is not provable. If $S$ is of the form $S=(\phi_1 \triangleright \psi_1 \Rightarrow \,)$ and $G \vdash \Ap T' \Leftrightarrow \phi_1$, for $T'=(\, \Rightarrow \phi_1)$, then by definition, we have $\Apm S=\neg (\Ap T' \triangleright \neg \Ap S')$, where $S'=(\psi_1 \Rightarrow \,)$. As $S'$ is lower than $S$, by $\mathcal{U}^{\circ}_p(S)$, we have $G \vdash (\psi_1, \Ap S' \Rightarrow \;)$ or equivalently $G \vdash (\psi_1 \Rightarrow \neg \Ap S')$. As $G \vdash \Ap T' \Leftrightarrow \phi_1$, using the rule $(CM)$, we get $G \vdash (\phi_1 \triangleright \psi_1 \Rightarrow \Ap T' \triangleright \neg \Ap S')$, which is equivalent to $G \vdash (\phi_1 \triangleright \psi_1, \neg (\Ap T' \triangleright \neg \Ap S') \Rightarrow \,)$ and hence $G \vdash S \cdot (\Apm S \Rightarrow \,)$.\\
If $S$ is of the form $S=(\; \Rightarrow \phi_0 \triangleright \psi_0)$ and
$G \vdash \neg \Ap T'' \Leftrightarrow \phi_0$, for $T''=(\phi_0 \Rightarrow \,)$, then by definition $\Apm S=\neg \Ap T'' \triangleright \Ap S''$, where $S''=(\, \Rightarrow \psi_0)$. Using $\mathcal{U}^{\circ}_p(S)$ on $S''$, we have  $G \vdash \Ap S'' \Rightarrow \psi_0$ and since $G \vdash \neg \Ap T'' \Leftrightarrow \phi_0$, using the rule $(CM)$, we can show that $G \vdash \neg \Ap T'' \triangleright \Ap S'' \Rightarrow \phi_0 \triangleright \psi_0$ and hence $G \vdash S \cdot (\Apm S \Rightarrow \,)$.\\
For $(ii')$, let $S \cdot (\bar{C} \Rightarrow \bar{D})$ be derivable in $G$ and the last rule is $(CM)$, for a $p^{\diamond}$-free sequent $\bar{C} \Rightarrow \bar{D}$. We want to show that $G \vdash \bar{C} \Rightarrow \Apm S, \bar{D}$. If $S$ is provable, as $\Apm S=\top$, we have $\bar{C} \Rightarrow \Apm S, \bar{D}$. Therefore, we assume that $S$ is not provable.
As the last rule used in the proof of $S \cdot (\bar{C} \Rightarrow \bar{D})$ is $(CM)$, the sequent must have the form $(\phi_1 \triangleright \psi_1 \Rightarrow \phi_0 \triangleright \psi_0)$ and the rule is in the form:
\begin{center}
\begin{tabular}{c}
\AxiomC{$\{\phi_0 \Rightarrow \phi_1 \;\;\; , \;\;\; \phi_1 \Rightarrow \phi_0\}$} 
 \AxiomC{$\psi_1 \Rightarrow \psi_0$} 
   \RightLabel{\footnotesize $CM$} 
 \BinaryInfC{$\phi_1 \triangleright \psi_1 \Rightarrow \phi_0 \triangleright \psi_0$}
 \DisplayProof
\end{tabular}
\end{center}
Then, there are two cases to consider, either $\bar{D}=\{\phi_0 \triangleright \psi_0\}$ or $\bar{D}=\varnothing$. First, assume that $\bar{D}=\{\phi_0 \triangleright \psi_0\}$. Then, if $\bar{C}=\{\phi_1 \triangleright
\psi_1\}$, the sequent $(\bar{C} \Rightarrow \Apm S, \bar{D})$ is clearly provable. Hence, we assume that $\bar{C}=\varnothing$ which implies $S=(\phi_1 \triangleright \psi_1 \Rightarrow \,)$. As $\bar{D}=\{\phi_0 \triangleright \psi_0\}$ and $\bar{D}$ is $p^{\diamond}$-free, we know that $\phi_0$ and $\psi_0$ are $p^{\circ}$-free and $p^{\diamond}$-free, respectively. Set $T'=(\, \Rightarrow \phi_1)$ and $S'=(\psi_1 \Rightarrow \,)$. By $\mathcal{U}^{\circ}_p(S)$ and the fact that $T'$ is lower than $S$, we know that $\forall^{\circ}p T'$ exists, $G \vdash \Ap T' \Rightarrow \phi_1$, and since $\phi_0$ is $p^{\circ}$-free and $G \vdash \phi_0 \Rightarrow \phi_1$, we have $G \vdash \phi_0 \Rightarrow \Ap T'$. As $G \vdash \phi_0 \Leftrightarrow \phi_1$, we have $G \vdash \phi_1 \Rightarrow \Ap T'$  and hence $G \vdash \Ap T' \Leftrightarrow \phi_1$. Now, by definition, we have $\Apm S=\neg (\Ap T' \triangleright \neg \Ap S')$. Since $S'$ is lower than $S$ and $\psi_0$ is $p^{\diamond}$-free, by $\mathcal{U}^{\circ}_p(S)$, we have $G \vdash (\; \Rightarrow \Ap S', \psi_0)$, and hence $G \vdash \neg \Ap S' \Rightarrow \psi_0$. As $G \vdash \Ap T' \Leftrightarrow \phi_1$ and $G \vdash \phi_1 \Leftrightarrow \phi_0$, we have $G \vdash \Ap T' \Leftrightarrow \phi_0$. By applying the rule $(CM)$ on the provable sequents $\neg \Ap S' \Rightarrow \psi_0$ and $\Ap T' \Leftrightarrow \phi_0$, we get $G \vdash \Ap T' \triangleright \neg \Ap S' \Rightarrow \phi_0 \triangleright \psi_0$, which implies
$G \vdash (\; \Rightarrow \neg (\Ap T' \triangleright \neg \Ap S'), \phi_0 \triangleright \psi_0)$ or equivalently $G \vdash (\bar{C} \Rightarrow \Apm S, \bar{D})$.\\
For the second case, let $\bar{D}=\varnothing$. If $\bar{C}=\varnothing$, then the sequent $S$ is provable which is a contradiction. Hence, $\bar{C}=\{\phi_1 \triangleright \psi_1\}$ which implies that $S=(\; \Rightarrow \phi_0 \triangleright \psi_0)$. 
Since $\bar{C}=\{\phi_1 \triangleright \psi_1\}$ and $\bar{C}$ is $p^{\circ}$-free, the formulas $\phi_1$ and $\psi_1$ are $p^{\diamond}$-free and $p^{\circ}$-free, respectively. Set $T''=(\phi_0 \Rightarrow \,)$ and $S''=(\, \Rightarrow \psi_0)$. Then, by $\mathcal{U}^{\circ}_p(S)$ and the fact that $T''$ is lower than $S$, we know that $\forall^{\circ}p T''$ exists and $G \vdash (\phi_0, \Ap T'' \Rightarrow \,)$ which implies $G \vdash \phi_0 \Rightarrow \neg \Ap T''$. As $G \vdash \phi_0 \Rightarrow \phi_{1}$ and $\phi_{1}$ is $p^{\diamond}$-free, we have $G \vdash (\, \Rightarrow \Ap T'', \phi_{1})$ which implies $G \vdash \neg \Ap T'' \Rightarrow \phi_1$. As $G \vdash \phi_0 \Leftrightarrow \phi_1$, we have $G \vdash \neg \Ap T'' \Leftrightarrow \phi_0$.
Now, by definition, we have $\Apm S=\neg \Ap T'' \triangleright \Ap S''$. As $S''$ is lower than $S$ and $\psi_1$ is $p^{\circ}$-free, by $\mathcal{U}^{\circ}_p(S)$, we have $G \vdash \psi_1 \Rightarrow \Ap S''$. As $G \vdash \neg \Ap T'' \Leftrightarrow \phi_0$ and $G \vdash \phi_0 \Leftrightarrow \phi_1$, we have $G \vdash \neg \Ap T'' \Leftrightarrow \phi_1$. Now, we can apply the rule $(CM)$ on the premises $\neg \Ap T'' \Leftrightarrow \phi_1$ and $\psi_1 \Rightarrow \Ap S''$ to prove $ \phi_{1} \triangleright \psi_{1} \Rightarrow \neg \Ap T'' \triangleright \Ap S''$ and hence $G \vdash (\bar{C} \Rightarrow \Apm S, \bar{D})$.\\

\noindent The case in which $G=\mathbf{GCMN}$ and the last rule is $(CN)$ is similar to the case $G = \mathbf{GCEN}$ where the last rule is $(CN)$, as discussed in Subsubsection \ref{CE}.

\subsubsection{Conditional logics $\mathsf{CMC}$ and $\mathsf{CK}$}

Let $G$ be either $\mathbf{GCMC}$ or $\mathbf{GCK}$. We will show that $G$ has MULIP. To define $\Apm S$, if  $\neg \, \mathcal{U}^{\circ}_p(S)$, set $\Apm S$ as $\bot$. If $\mathcal{U}^{\circ}_p(S) $, define $\Apm S$ as the following: if $S$ is provable, define $\Apm S=\top$. 
Otherwise, if $S$ is of the form $S=(\{\phi_k \triangleright \psi_k\}_{k \in K} \Rightarrow \,)$, for some non-empty multiset $K$ and there exists a $p^{\circ}$-free formula $\chi$ such that $V^{\dagger}(\chi) \subseteq V^{\dagger}(S)$, for any $\dagger \in \{+, -\}$ and $G \vdash \phi_k \Leftrightarrow \chi$, for any $k \in K$, then define $\Apm S=\neg (\chi \triangleright  \neg \Ap S')$, where $S'=(\{\psi_k\}_{k \in K} \Rightarrow \,)$. (The choice of $\chi$ is not important, as any two such formulas are equivalent. It is a simple consequence of the conditions $G \vdash \chi \Leftrightarrow \phi_k$, for any $k \in K$ and the non-emptiness of $K$).
If $S$ is of the form $S=(\{\phi_k \triangleright \psi_k\}_{k \in K} \Rightarrow \phi_0 \triangleright \psi_0)$, for some multiset $K$ (possibly empty), define $T''=(\phi_0 \Rightarrow \,)$ and $S''=(\{\psi_k\}_{k \in K} \Rightarrow \psi_0)$. Then, if $G \vdash \neg \Ap T'' \Leftrightarrow \phi_0 \Leftrightarrow \phi_k$, for any $k \in K$, define $\Apm S=\neg \Ap T'' \triangleright \Ap S''$. Otherwise, define $\Apm S=\bot$. Note that $\Apm S$ is well-defined, as in each case $S'$, $S''$ and $T''$ are lower than $S$, and we assumed $\mathcal{U}^{\circ}_p(S)$.\\
To show that $G$ has MULIP, we assume $\mathcal{U}^{\circ}_p(S)$ to prove the three conditions $(var)$, $(i)$ and $(ii')$ in Definition \ref{DfnUniformInterpolationSeq} for $\Apm S$. The condition $(var)$ is an immediate consequence of $p^{\circ}$-freeness of $\chi$ and the facts that $V^{\dagger}(\chi) \subseteq V^{\dagger}(S)$, for any $\dagger \in \{+, -\}$, the assumption $\mathcal{U}^{\circ}_p(S)$ and the fact that $S'$ and $S''$ and $T''$ are lower than $S$.\\
For $(i)$, if $S$ is provable, there is nothing to prove. If $S$ is of the form $S=(\{\phi_k \triangleright \psi_k\}_{k \in K} \Rightarrow \,)$, for some non-empty multiset $K$ and there is a $p^{\circ}$-free formula $\chi$ such that $V^{\dagger}(\chi) \subseteq V^{\dagger}(S)$, for any $\dagger \in \{+, -\}$ and $G \vdash \phi_k \Leftrightarrow \chi$, for any $k \in K$, then by definition we have $\Apm S=\neg (\chi \triangleright \neg \Ap S')$, where $S'=(\{\psi_k\}_{k \in K} \Rightarrow \,)$. As $K$ is non-empty, the sequent $S'$ is lower than $S$ and hence by $\mathcal{U}^{\circ}_p(S)$, we have $G \vdash (\{\psi_k\}_{k \in K}, \Ap S' \Rightarrow \;)$ or equivalently $G \vdash (\{\psi_k\}_{k \in K} \Rightarrow \neg \Ap S')$. As $G \vdash \chi \Leftrightarrow \phi_k$, for any $k \in K$ and $K$ is non-empty, by using the rule $(CMC)$, we get $G \vdash (\{\phi_k \triangleright \psi_k\}_{k \in K} \Rightarrow \chi \triangleright \neg \Ap S')$, which is equivalent to $G \vdash (\{\phi_k \triangleright \psi_k\}_{k \in K}, \neg (\chi \triangleright \neg \Ap S') \Rightarrow \,)$ and hence $G \vdash S \cdot (\Apm S \Rightarrow \,)$. \\
If $S$ is of the form $S=(\{\phi_k \triangleright \psi_k\}_{k \in K} \Rightarrow \phi_0 \triangleright \psi_0)$, for some multiset $K$ (possibly empty) and for any $k \in K$, we have $G \vdash \neg \Ap T'' \Leftrightarrow \phi_0 \Leftrightarrow \phi_k$, where $T''=(\phi_0 \Rightarrow \,)$, then by definition, we have $\Apm S=\neg \Ap T'' \triangleright \Ap S''$, where $S''=(\{\psi_k\}_{k \in K} \Rightarrow \psi_0)$. Using $\mathcal{U}^{\circ}_p(S)$ on $S''$, we have $G \vdash \{\psi_k\}_{k \in K}, \Ap S'' \Rightarrow \psi_0$. Since for any $k \in K$, we have $G \vdash \neg \Ap T'' \Leftrightarrow \phi_0 \Leftrightarrow \phi_k$, we can use the rule $(CMC)$ to show that $G \vdash (\{\phi_k \triangleright \psi_k\}_{k \in K}, \neg \Ap T'' \triangleright \Ap S'' \Rightarrow \phi_0 \triangleright \psi_0)$ and hence $G \vdash S  \cdot (\Apm S \Rightarrow \,)$. \\
For $(ii')$, let $S \cdot (\bar{C} \Rightarrow \bar{D})$ be derivable in $G$ and the last rule is the rule $(CMC)$, for a $p^{\diamond}$-free sequent $\bar{C} \Rightarrow \bar{D}$. We want to show that $\bar{C} \Rightarrow \Apm S, \bar{D}$ is derivable in $G$. If $S$ is provable, as $\Apm S=\top$, we have $\bar{C} \Rightarrow \Apm S, \bar{D}$. Therefore, we assume that $S$ is not provable.
As the last rule used in the proof of $S \cdot (\bar{C} \Rightarrow \bar{D})$ is $(CMC)$, the sequent must have the form $(\phi_1 \triangleright \psi_1 , \cdots , \phi_n \triangleright \psi_n \Rightarrow \phi_0 \triangleright \psi_0)$, for some $n \geq 1$ and the rule is in form:
\begin{center}
\begin{tabular}{c}
\AxiomC{$\{\phi_0 \Rightarrow \phi_i \;\;\; , \;\;\; \phi_i \Rightarrow \phi_0\}_{1 \leq i \leq n}$} 
 \AxiomC{$\psi_1, \cdots, \psi_n \Rightarrow \psi_0$} 
   \RightLabel{\footnotesize $CMC$} 
 \BinaryInfC{$\phi_1 \triangleright \psi_1 , \cdots , \phi_n \triangleright \psi_n \Rightarrow \phi_0 \triangleright \psi_0$}
 \DisplayProof
\end{tabular}
\end{center}
We know that $\bar{C}=\{\phi_{i} \triangleright \psi_{i}\}_{i \in M}$, for some $M \subseteq \{1, \cdots, n\}$. Hence, for each $i \in M$, the formulas $\phi_i$ and $\psi_{i}$ are $p^{\diamond}$-free and $p^{\circ}$-free, respectively.\\
There are two cases to consider, either $\bar{D}=\{\phi_0 \triangleright \psi_0\}$ or $\bar{D}=\varnothing$. First, assume $\bar{D}=\{\phi_0 \triangleright \psi_0\}$. Then, the formulas $\phi_0$ and $\psi_0$ are $p^{\circ}$-free and $p^{\diamond}$-free, respectively. Now, if $M=\{1, \cdots, n\}$, then $\bar{C} \Rightarrow \bar{D}$ and hence $\bar{C} \Rightarrow \bar{D}, \Apm S$ are both provable in $G$. Therefore, assume $M \neq \{1, \cdots, n\}$ and set $a \in \{1, \cdots, n\}-M$. Note that $\phi_a \triangleright \psi_a$ occurs in the antecedent of $S$. By $\mathcal{U}^{\circ}_p(S)$ and the fact that $(\; \Rightarrow \phi_a)$ is lower than $S$, we know that $\forall^{\circ}p (\; \Rightarrow \phi_a)$ exists. Set $\chi=\forall^{\circ}p (\; \Rightarrow \phi_a)$. By $\mathcal{U}^{\circ}_p(S)$, we know that $V^{\dagger}(\chi) \subseteq V^{\dagger}(S)$ and $G \vdash \chi \Rightarrow \phi_a$. Moreover, since $\phi_0$ is $p^{\circ}$-free and $G \vdash \phi_0 \Rightarrow \phi_a$, we have $G \vdash \phi_0 \Rightarrow \forall^{\circ}p (\; \Rightarrow \phi_a)$, or equivalently $G \vdash \phi_0 \Rightarrow \chi$. As $G \vdash \phi_0 \Leftrightarrow \phi_i$, for any $i \in \{1, \cdots, n\}$, we have $G \vdash \phi_i \Leftrightarrow \chi$, for any $i \in \{1, \cdots, n\}$. Now, by the form of $S$ and the fact that $S^a$ is non-empty, we have $\Apm S=\neg (\chi \triangleright \neg \Ap S')$, where $S'=(\{\psi_i\}_{i \notin M} \Rightarrow \,)$. 
Since $M \neq \{1, \cdots , n\}$ and hence $\{\psi_i\}_{i \notin M}$ is not empty, we know that $S'$ is lower than $S$. Moreover,
$(\{\psi_{i}\}_{i \in M} \Rightarrow \psi_0)$ is $p^{\diamond}$-free and hence by $\mathcal{U}^{\circ}_p(S)$, we have $G \vdash (\{\psi_{i}\}_{i \in M} \Rightarrow \Ap S', \psi_0)$. Now, we can apply the rule $(CMC)$ 
\begin{center}
\begin{tabular}{c c}
\AxiomC{$\phi_0 \Leftrightarrow \chi \;\;\;,\;\;\; \{\phi_0 \Leftrightarrow \phi_i\}_{i \in M}$} 
 \AxiomC{$ \neg \Ap S', \{\psi_i\}_{i \in M} \Rightarrow \psi_0$} 
   \RightLabel{\footnotesize$CMC$} 
 \BinaryInfC{$\chi \triangleright \neg \Ap S', \{\phi_i \triangleright \psi_i\}_{i \in M} \Rightarrow \phi_0 \triangleright \psi_0$}
 \DisplayProof
\end{tabular}
\end{center}
and equivalently, we have $G \vdash (\bar{C} \Rightarrow \Apm S, \bar{D})$.\\
For $\bar{D}=\varnothing$, if $M=\varnothing$, then $S$ would be provable which contradicts our assumption. Hence, $M \neq \varnothing$. Set $T''=(\phi_{0} \Rightarrow \;)$ and $S''=(\{\psi_i\}_{i \notin M} \Rightarrow \psi_0)$. By $\mathcal{U}^{\circ}_p(S)$ and the fact that $T''$ is lower than $S$, we know that $\forall^{\circ}p T''$ exists and $G \vdash (\phi_0, \forall^{\circ}p T'' \Rightarrow \,)$ which is equivalent to $G \vdash \phi_0 \Rightarrow \neg \forall^{\circ}p T''$. For any $i \in M$, we have $G \vdash \phi_0 \Rightarrow \phi_{i}$ and as $\phi_{i}$ is $p^{\diamond}$-free, by $\mathcal{U}^{\circ}_p(S)$, we have $G \vdash (\, \Rightarrow \forall^{\circ}p T'', \phi_i)$ or equivalently $G \vdash (\neg \forall^{\circ}p T'' \Rightarrow \phi_i)$. Moreover, for any $i \in M$, we have $G \vdash \phi_i \Leftrightarrow \phi_0$. Since $M$ is non-empty, we have $G \vdash \neg \forall^{\circ}p T'' \Leftrightarrow \phi_0 \Leftrightarrow \phi_i$ for any $i \in M$ and hence for any $i \in \{1, \cdots, n\}$.\\
Now, by definition, we have $\Apm S=\neg \Ap T'' \triangleright \Ap S''$. As $S''$ is lower than $S$, by $\mathcal{U}^{\circ}_p(S)$ and the fact that $(\{\psi_i\}_{i \in M} \Rightarrow \,)$ is $p^{\diamond}$-free, we have $ G \vdash \{\psi_i\}_{i \in M} \Rightarrow \Ap S''$. As $M$ is non-empty, we can apply the rule $(CMC)$ 
\begin{center}
\begin{tabular}{c c}
\AxiomC{$\{\phi_i \Leftrightarrow \neg \Ap T''\}_{i \in M}$} 
 \AxiomC{$ \{\psi_i\}_{i \in M} \Rightarrow \Ap S''$} 
   \RightLabel{\footnotesize$CMC$} 
 \BinaryInfC{$\{\phi_i \triangleright \psi_i\}_{i \in M} \Rightarrow \neg \Ap T'' \triangleright \Ap S''$}
 \DisplayProof
\end{tabular}
\end{center}
and hence $G \vdash \bar{C} \Rightarrow \Apm S, \bar{D}$.\\

\noindent The case in which $G=\mathbf{GCK}$ and the last rule is $(CN)$ is similar to the case $G = \mathbf{GCEN}$ where the last rule is $(CN)$, as discussed in Subsubsection \ref{CE}.

\subsubsection{Conditional logic $\mathsf{CKID}$}
Let $G$ be $\mathbf{GCKID}$. We will show that $G$ has MULIP. To define $\Apm S$, if  $\neg \, \mathcal{U}^{\circ}_p(S)$, define $\Apm S$ as $\bot$. If $\mathcal{U}^{\circ}_p(S) $, define $\Apm S$ as the following: if $S$ is provable, define $\Apm S=\top$. Otherwise, if $S$ is of the form $S=(\{\phi_k \triangleright \psi_k\}_{k \in K} \Rightarrow \,)$, for some non-empty multiset $K$ and there exists $p^{\circ}$-free formula $\chi$ such that $V^{\dagger}(\chi) \subseteq V^{\dagger}(S)$, for any $\dagger \in \{+, -\}$ and $G \vdash \phi_k \Leftrightarrow \chi$, for any $k \in K$, then define $\Apm S=\neg (\chi \triangleright  \neg \Ap S')$, where $S'=(\{\psi_k\}_{k \in K} \Rightarrow \,)$. (The choice of $\chi$ is not important, as any two such formulas are equivalent. It is a simple consequence of the conditions $G \vdash \chi \Leftrightarrow \phi_k$, for any $k \in K$ and the non-emptiness of $K$). If $S$ is of the form $S=(\{\phi_k \triangleright \psi_k\}_{k \in K} \Rightarrow \phi_0 \triangleright \psi_0)$, define
$T''=(\phi_0 \Rightarrow \,)$ and $S''=(\phi_0, \{\psi_k\}_{k \in K} \Rightarrow \psi_0)$. Then, if $G \vdash \neg \Ap T'' \Leftrightarrow \phi_0 \Leftrightarrow \phi_k$, for any $k \in K$, define $\Apm S=\neg \Ap T'' \triangleright \Ap S''$. Otherwise, define $\Apm S=\bot$. Note that $\Apm S$ is well-defined as we assumed $\mathcal{U}^{\circ}_p(S)$ and in each case $S'$, $S''$ and $T''$ are lower than $S$.\\
To show that $G$ has MULIP, we assume $\mathcal{U}^{\circ}_p(S)$ to prove the three conditions $(var)$, $(i)$ and $(ii')$ in Definition \ref{DfnUniformInterpolationSeq} for $\Apm S$. The condition $(var)$ is an immediate consequence of $p^{\circ}$-freeness of $\chi$ and the fact that $V^{\dagger}(\chi) \subseteq V^{\dagger}(S)$, for any $\dagger \in \{+, -\}$ and the facts that $\mathcal{U}^{\circ}_p(S)$ holds and $S'$ and $S''$ and $T''$ are lower than $S$.\\
For $(i)$, if $S$ is provable, there is nothing to prove. If $S$ is of the form $S=(\{\phi_k \triangleright \psi_k\}_{k \in K} \Rightarrow \,)$, for some non-empty $K$ and there is $p^{\circ}$-free formula $\chi$ such that $V^{\dagger}(\chi) \subseteq V^{\dagger}(S)$, for any $\dagger \in \{+, -\}$ and $G \vdash \phi_k \Leftrightarrow \chi$, for any $k \in K$, then by definition, we have $\Apm S=\neg (\chi \triangleright \neg \Ap S')$, where $S'=(\{\psi_k\}_{k \in K} \Rightarrow \,)$. Since we assumed that $K$ is non-empty, we know that $S'$ is lower than $S$. Therefore, by
$\mathcal{U}^{\circ}_p(S)$, we have $(\{\psi_k\}_{k \in K}, \Ap S' \Rightarrow \;)$ or equivalently $(\{\psi_k\}_{k \in K} \Rightarrow \neg \Ap S')$ and then using the weakening rule, we get $\chi, \{\psi_k\}_{k \in K} \Rightarrow \neg \Ap S'$. As $G \vdash \chi \Leftrightarrow \phi_k$, for any $k \in K$, by using 
the rule $(CKID)$, we get $G \vdash (\{\phi_k \triangleright \psi_k\}_{k \in K} \Rightarrow \chi \triangleright \neg \Ap S')$, which is equivalent to $G \vdash (\{\phi_k \triangleright \psi_k\}_{k \in K} , \neg (\chi \triangleright \neg \Ap S') \Rightarrow \,)$ and hence $G \vdash S \cdot (\Apm S \Rightarrow \,)$.\\
If $S$ is of the form $S=(\{\phi_k \triangleright \psi_k\}_{k \in K} \Rightarrow \phi_0 \triangleright \psi_0)$ and $G \vdash \neg \Ap T'' \Leftrightarrow \phi_0 \Leftrightarrow \phi_k$, for $T''=(\phi_0 \Rightarrow \,)$ and any $k \in K$, then we have $\Apm S=\neg \Ap T'' \triangleright \Ap S''$, where $S''=(\phi_0, \{\psi_k\}_{k \in K} \Rightarrow \psi_0)$. Using $\mathcal{U}^{\circ}_p(S)$ on $S''$, we have $G \vdash \phi_0, \{\psi_k\}_{k \in K}, \Ap S'' \Rightarrow \psi_0$. Since for any $k \in K$, we have $G \vdash \neg \Ap T'' \Leftrightarrow \phi_0 \Leftrightarrow \phi_k$, we can use the rule $(CKID)$ to show that $G \vdash (\{\phi_k \triangleright \psi_k\}_{k \in K}, \neg \Ap T'' \triangleright \Ap S'' \Rightarrow \phi_0 \triangleright \psi_0)$ and hence $G \vdash S  \cdot (\Apm S \Rightarrow \,)$.\\
For $(ii')$, let $S \cdot (\bar{C} \Rightarrow \bar{D})$ be derivable in $G$ and the last rule is the rule $(CKID)$, for a $p^{\diamond}$-free sequent $\bar{C} \Rightarrow \bar{D}$. We want to show that $\bar{C} \Rightarrow \Apm S, \bar{D}$ is derivable in $G$. If $S$ is provable, as $\Apm S=\top$, we have $\bar{C} \Rightarrow \Apm S, \bar{D}$. Therefore, we assume that $S$ is not provable.
As the last rule used in the proof of $S \cdot (\bar{C} \Rightarrow \bar{D})$ is $(CKID)$, the sequent must have the form $(\{\phi_i \triangleright \psi_i\}_{i \in I} \Rightarrow \phi_0 \triangleright \psi_0)$ for some (possibly empty) multiset $I$ and the rule has the form:
\begin{center}
\begin{tabular}{c}
\AxiomC{$\{\phi_0 \Rightarrow \phi_i \;\;\; , \;\;\; \phi_i \Rightarrow \phi_0\}_{i \in I}$} 
 \AxiomC{$\phi_0, \{\psi_i\}_{i \in I} \Rightarrow \psi_0$} 
   \RightLabel{\footnotesize$CKID$} 
 \BinaryInfC{$\{\phi_i \triangleright \psi_i\}_{i \in I} \Rightarrow \phi_0 \triangleright \psi_0$}
 \DisplayProof
\end{tabular}
\end{center}
Let $\bar{C}=\{\phi_i \triangleright \psi_i\}_{i \in M}$, for some $M \subseteq I$. Hence, for any $i \in M$, the formulas $\phi_i$ and $\psi_{i}$ are $p^{\diamond}$-free and $p^{\circ}$-free, respectively.\\ 
There are two cases to consider, either $\bar{D}=\{\phi_0 \triangleright \psi_0\}$ or $\bar{D}=\varnothing$. First, assume $\bar{D}=\{\phi_0 \triangleright \psi_0\}$. Then, $\phi_0$ and $\psi_0$ are $p^{\circ}$-free and $p^{\diamond}$-free, respectively. If $M=I$, then $G \vdash \bar{C} \Rightarrow \bar{D}$ and then by weakening, we have $G \vdash \bar{C} \Rightarrow \bar{D}, \Apm S$. Hence, assume $M \neq I$ and set $a \in I-M$. Note that $\phi_a \triangleright \psi_a$ occurs in the antecedent of $S$. By $\mathcal{U}^{\circ}_p(S)$ and the fact that the sequent $(\; \Rightarrow \phi_a)$ is lower than $S$, we know that $ \forall^{\circ}p (\; \Rightarrow \phi_a)$ exists. Set $\chi=\forall^{\circ}p (\; \Rightarrow \phi_a)$. By $\mathcal{U}^{\circ}_p(S)$, we know that $V^{\dagger}(\chi) \subseteq V^{\dagger}(S)$, $G \vdash \chi \Rightarrow \phi_a$ and since $\phi_0$ is $p^{\circ}$-free and $G \vdash \phi_0 \Rightarrow \phi_a$, we have $G \vdash \phi_0 \Rightarrow \chi$. As $G \vdash \phi_0 \Leftrightarrow \phi_i$, for any $i \in I$, we have $G \vdash \phi_i \Leftrightarrow \chi$. Now, by the form of $S$ and the fact that $S^{a}$ is non-empty (since $M \neq I$), by definition, we have $\Apm S=\neg (\chi \triangleright \neg \Ap S')$, where $S'=(\{\psi_i\}_{i \in I-M} \Rightarrow \,)$. Again, by $M \neq I$, we know that $S'$ is lower than $S$. As $G \vdash (\phi_0, \{\psi_i\}_{i \in I} \Rightarrow \psi_0)$ or equivalently $G \vdash S' \cdot (\phi_0, \{\psi_i\}_{i \in M} \Rightarrow \psi_0)$ and $(\phi_0, \{\psi_i\}_{i \in M} \Rightarrow \psi_0)$ is $p^{\diamond}$-free, by $\mathcal{U}^{\circ}_p(S)$, we have $G \vdash (\phi_0, \{\psi_i\}_{i \in M} \Rightarrow \Ap S', \psi_0)$, or equivalently $G \vdash (\phi_0, \{\psi_i\}_{i \in M}, \neg \Ap S' \Rightarrow \psi_0)$. Applying the rule $(CKID)$ on the latter sequent and the provable sequents $\{\phi_i \Leftrightarrow \phi_0\}_{i \in M}$ and $\chi \Leftrightarrow \phi_0$, we get  $G \vdash (\{\phi_i \triangleright \psi_i\}_{i \in M}, (\chi \triangleright \neg \Ap S') \Rightarrow \phi_0 \triangleright \psi_0)$. Therefore, $G \vdash (\{\phi_i \triangleright \psi_i\}_{i \in M} \Rightarrow \neg (\chi \triangleright \neg \Ap S'), \phi_0 \triangleright \psi_0)$ or equivalently $G \vdash (\bar{C} \Rightarrow \Apm S, \bar{D})$. \\
If $\bar{D}=\varnothing$, then $\bar{C} \neq \varnothing$. Otherwise, $S$ would be provable which contradicts our assumption. Therefore, $M \neq \varnothing$. Moreover, note that $S=(\{\phi_i \triangleright \psi_i\}_{i \in I-M} \Rightarrow \phi_0 \triangleright \psi_0)$. Set $T''=(\phi_{0} \Rightarrow \;)$ and $S''=(\phi_0, \{\psi_i\}_{i \in I -M} \Rightarrow \psi_0)$. By $\mathcal{U}^{\circ}_p(S)$ and the fact that $T''$ is lower than $S$, we know that $\forall^{\circ}p T''$ exists and $G \vdash (\phi_0, \forall^{\circ}p T'' \Rightarrow \,)$ which is equivalent to $G \vdash \phi_0 \Rightarrow \neg \forall^{\circ}p T''$. We know $G \vdash \phi_i \Leftrightarrow \phi_0$, for any $i \in I$. Specially, for any $i \in M$, we have $G \vdash \phi_0 \Rightarrow \phi_{i}$ and as $\phi_{i}$ is $p^{\diamond}$-free, by $\mathcal{U}^{\circ}_p(S)$, we have $G \vdash (\, \Rightarrow \forall^{\circ}p T'', \phi_i)$ or equivalently $G \vdash (\neg \forall^{\circ}p T'' \Rightarrow \phi_i)$. Since $G \vdash \phi_i \Leftrightarrow \phi_0$, for any $i \in M$ and $M$ is non-empty, we have $G \vdash  \neg \forall^{\circ}p T''  \Leftrightarrow \phi_0 \Leftrightarrow \phi_i$, for any $i \in M$ and hence for an any $i \in I$. 
Now, by definition, we have $\Apm S=\neg \Ap T'' \triangleright \Ap S''$. Note that $S''$ is lower than $S$ and $G \vdash (\phi_0, \{\psi_i\}_{i \in I} \Rightarrow \psi_0)$ or equivalently $G \vdash S'' \cdot ( \{\psi_i\}_{i \in M} \Rightarrow )$. Since $(\{\psi_i\}_{i \in M} \Rightarrow \;)$ is $p^{\diamond}$-free, by $\mathcal{U}^{\circ}_p(S)$, we have $G \vdash \{\psi_i\}_{i \in M} \Rightarrow \Ap S''$. Therefore, using the weakening rule, we have $G \vdash \neg \Ap T'', \{\psi_i\}_{i \in M} \Rightarrow \Ap S''$. By applying the rule $(CKID)$ on the latter sequent and the provable sequents $\{\neg \Ap T'' \Leftrightarrow \phi_i\}_{i \in M}$, we get $G \vdash \{\phi_{i} \triangleright \psi_{i}\}_{i \in M} \Rightarrow \neg \Ap T''  \triangleright \Ap S''$ and hence $G \vdash (\bar{C} \Rightarrow \Apm S, \bar{D})$.
\subsection{Uniform Interpolation} \label{SecUIP}
In this subsection, we will prove that the logics $\mathsf{CKCEM}$ and $\mathsf{CKCEMID}$ have uniform interpolation. Surprisingly, these two logics do not enjoy uniform Lyndon interpolation, as will be shown in Section \ref{SecNegativeResults}.

\begin{theorem}
The logics $\mathsf{CKCEM}$ and $\mathsf{CKCEMID}$ enjoy UIP and hence CIP.
\end{theorem}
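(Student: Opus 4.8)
The plan is to pass to the sequent level. By Theorem~\ref{SequentImpliesLogic} it is enough to show that the calculi $\mathbf{GCKCEM}$ and $\mathbf{GCKCEMID}$ (that is, $\GthW$ extended by the respective rule of Figure~\ref{figconditional}) have UIP, and by Theorem~\ref{GeneralArgument} this reduces to showing that they have MUIP. Fix $G$ to be one of these two calculi, with $\mathcal M=\{(CKCEM)\}$, resp. $\{(CKCEMID)\}$; since we only want UIP we drop all the superscripts $\circ,\diamond,\dagger$. For each sequent $S$ and atom $p$ I would define a formula $\forall_{\mathcal M}p\,S$ and then, assuming the induction hypothesis $\mathcal U_p(S)$ that every sequent lower than $S$ has a uniform $\forall_p$-interpolant $\forall p\,T$, verify conditions $(var)$, $(i)$, $(ii')$ of Definition~\ref{DfnUniformInterpolationSeq}.

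The definition follows the pattern already used for $\mathbf{GCK}$ and $\mathbf{GCKID}$, with one extra case split forced by the fact that $(CKCEM)$ carries conditionals on \emph{both} sides of its conclusion. Concretely: if $\neg\,\mathcal U_p(S)$, or $S$ contains a non-conditional formula, or none of the clauses below applies, set $\forall_{\mathcal M}p\,S=\bot$; if $S$ is provable in $G$, set it to $\top$. Otherwise $S=(\{\phi_k\triangleright\psi_k\}_{k\in K}\seq\{\phi_l\triangleright\psi_l\}_{l\in L})$. If $L=\varnothing$ and there is a $p$-free $\chi$ with $V(\chi)\subseteq V(S)$ and $G\vdash\phi_k\Leftrightarrow\chi$ for all $k\in K$, put $\forall_{\mathcal M}p\,S=\neg(\chi\triangleright\neg\forall p\,S')$ with $S'=(\{\psi_k\}_{k\in K}\seq\,)$. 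If $L\neq\varnothing$, pick $l_0\in L$, set $T''=(\phi_{l_0}\seq\,)$ and $S''=(\{\psi_k\}_{k\in K}\seq\{\psi_l\}_{l\in L})$, and, \emph{provided} $G\vdash\neg\forall p\,T''\Leftrightarrow\phi_r$ for every conditional antecedent $\phi_r$ occurring in $S$, put $\forall_{\mathcal M}p\,S=\neg\forall p\,T''\triangleright\forall p\,S''$. For $\mathbf{GCKCEMID}$ the only change is to add $\phi_{l_0}$, resp. $\chi$, to the antecedent of $S''$, resp. $S'$, exactly as in the passage from $\mathbf{GCK}$ to $\mathbf{GCKID}$. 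All of $S'$, $S''$, $T''$ are lower than $S$, so the definition makes sense, and $(var)$ is immediate from $\mathcal U_p(S)$ together with $V(\chi)\subseteq V(S)$, resp. $V(\neg\forall p\,T'')\subseteq V(T'')\subseteq V(S)$.

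For $(i)$ I would derive $S\cdot(\forall_{\mathcal M}p\,S\seq\,)$ by a single application of $(CKCEM)$: its equivalence-premise is supplied by the defining equivalences (all conditional antecedents in sight being provably one and the same formula), and its propositional premise is exactly condition $(i)$ for $\forall p\,S'$, resp. $\forall p\,S''$, given by $\mathcal U_p(S)$; the target sequent has a non-empty succedent in each clause, which is what lets $(CKCEM)$ apply. For $(ii')$ one unfolds a derivation of $S\cdot(\bar C\seq\bar D)$ whose last rule is $(CKCEM)$ with $\bar C\seq\bar D$ a $p$-free sequent: then $\bar C=\{\phi_i\triangleright\psi_i\}_{i\in M}$ and $\bar D=\{\phi_j\triangleright\psi_j\}_{j\in N}$ are multisets of conditionals, all conditional antecedents occurring anywhere are provably equivalent (those from $\bar C,\bar D$ being $p$-free), and the propositional premise is $S'\cdot(\{\psi_i\}_{i\in M}\seq\{\psi_j\}_{j\in N})$, resp. $S''\cdot(\dots)$, a $p$-free extension of a lower sequent. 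Feeding this into $\forall p\,S'$, resp. $\forall p\,S''$, and — in the clauses where $\chi$ is needed — taking $\chi=\forall p\,(\,\seq\phi_a)$ for a conditional $\phi_a\triangleright\psi_a$ of $S$ that is not absorbed into $\bar C$ (which forces $V(\chi)\subseteq V(S)$ and $G\vdash\phi_a\Leftrightarrow\chi$), one application of $(CKCEM)$ produces $\bar C\seq\forall_{\mathcal M}p\,S,\bar D$. Here one argues, as in the $\mathbf{GCK}$ case, that the corner cases ($\bar C$ empty, $\bar D$ empty, all conditionals absorbed) either make $\bar C\seq\forall_{\mathcal M}p\,S,\bar D$ trivially derivable or force the side condition in the definition of $\forall_{\mathcal M}p\,S$ to hold. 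The $\mathbf{GCKCEMID}$ case runs identically, with the extra $\phi_0$ carried along by weakening, as in Subsection~\ref{ULIPSubsection}. Finally, ``hence CIP'' is the routine remark, parallel to the ULIP$\Rightarrow$LIP argument, that $\exists p_1\cdots\exists p_n\,\phi$ with $\{p_1,\dots,p_n\}=V(\phi)\setminus V(\psi)$ is a Craig interpolant for any $L$-provable $\phi\to\psi$.

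I expect the main obstacle to be exactly the two-sided, ``symmetric'' shape of $(CKCEM)$: conditionals occur on both sides of its conclusion and all their antecedents are forced to coincide, so the interpolant must collapse a whole family of $p$-free conditional-antecedents into one representative, and it must have the right outer form — since $(CKCEM)$ never derives a sequent with empty succedent, the case $L=\varnothing$ genuinely requires a negated conditional $\neg(\chi\triangleright\neg\forall p\,S')$ whereas $L\neq\varnothing$ requires a plain conditional $\neg\forall p\,T''\triangleright\forall p\,S''$. Making all sub-cases of $(ii')$, including the empty-$\bar C$ and empty-$\bar D$ corners and the interaction with the definitional side condition, go through simultaneously is where the bookkeeping concentrates.
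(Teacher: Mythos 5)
Your proposal is correct and follows essentially the same route as the paper: reduce to MUIP for $\mathbf{GCKCEM}$ and $\mathbf{GCKCEMID}$ via Theorems \ref{SequentImpliesLogic} and \ref{GeneralArgument}, define the modal interpolant by cases on whether the succedent of $S$ is empty (a negated conditional $\neg(\chi\triangleright\neg\forall p\,S')$) or not (a plain conditional with a common $p$-free antecedent), and verify $(var)$, $(i)$, $(ii')$ using one application of the rule. The only cosmetic difference is that you hardwire the representative antecedent as $\neg\forall p\,(\phi_{l_0}\Rightarrow)$ in the definition, whereas the paper postulates an arbitrary $p$-free $\chi$ with $V(\chi)\subseteq V(S)$ and only instantiates it to exactly these formulas ($\forall p\,(\Rightarrow\phi_a)$ or $\neg\forall p\,(\phi_0\Rightarrow)$) during the verification of $(ii')$; both choices go through.
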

\begin{proof}
Using Theorem \ref{GeneralArgument} and Theorem \ref{SequentImpliesLogic}, it is enough to show that the sequent calculi $\mathbf{GCKCEM}$ and $\mathbf{GCKCEMID}$ have MUIP.
We only investigate the case $G=\mathbf{GCKCEM}$. The case $G=\mathbf{GCKCEMID}$ is similar. To define $\Apmcf S$, if  $\neg \, \mathcal{U}_p(S)$, define $\Apmcf S$ as $\bot$. If $\mathcal{U}_p(S) $, define $\Apmcf S$ as the following: if $S$ is provable, define $\Apmcf S=\top$. Otherwise, if $S$ is of the form $S=(\{\phi_k \triangleright \psi_k\}_{k \in K} \Rightarrow \,)$, for some non-empty multiset $K$ and if there exists a $p$-free formula $\chi$ such that $V(\chi) \subseteq V(S)$ and $G \vdash \phi_k \Leftrightarrow \chi$, for any $k \in K$, then define $\Apmcf S=\neg (\chi \triangleright \neg \forall p S')$, where $S'=(\{\psi_k\}_{k \in K} \Rightarrow \,)$. (The choice of $\chi$ is not important, as any two such formulas are equivalent. It is a simple consequence of the conditions $G \vdash \chi \Leftrightarrow \phi_k$, for any $k \in K$ and the non-emptiness of $K$). If $S$ is of the form $S=(\{\phi_k \triangleright \psi_k\}_{k \in K} \Rightarrow \{\phi_l \triangleright \psi_l\}_{l \in L})$, for some multisets $K$ and $L$ such that $L$ is non-empty and if there exists a $p$-free formula $\chi$ such that $V(\chi) \subseteq V(S)$ and $G \vdash \phi_r \Leftrightarrow \chi$, for any $r \in K \cup L$, then define $\Apmcf S=(\chi \triangleright \forall p S')$, where $S'=(\{\psi_k\}_{k \in K} \Rightarrow \{\psi_l\}_{l \in L})$. (Note that the choice of $\chi$ is not important, again. This time, we use the non-emptiness of $L$). Otherwise, define $\Apmcf S=\bot$. Note that $\Apmcf S$ is well-defined as $S'$ is lower than $S$ and we assumed $\mathcal{U}_p(S)$.\\
To show that $G$ has MUIP, we assume $\mathcal{U}_p(S)$ to prove the three conditions $(var)$, $(i)$ and $(ii')$ in Definition \ref{DfnUniformInterpolationSeq} for $\Apmcf S$. The condition $(var)$ is an immediate consequence of $p$-freeness of $\chi$ and the fact that $V(\chi) \subseteq V(S)$, the assumption $\mathcal{U}_p(S)$ and the fact that $S'$ is lower than $S$.\\
For $(i)$, if $S$ is of the form $S=(\{\phi_k \triangleright \psi_k\}_{k \in K} \Rightarrow \,)$, where $K \neq \varnothing$, and there exists a $p$-free formula $\chi$ such that $V(\chi) \subseteq V(S)$ and $G \vdash \phi_k \Leftrightarrow \chi$, for any $k \in K$, then by definition, we have $\Apmcf S=\neg (\chi \triangleright \neg \forall p S')$, where $S'=(\{\psi_k\}_{k \in K} \Rightarrow \,)$. As $K$ is non-empty, the sequent $S'$ is lower than $S$. Hence, using $\mathcal{U}_p(S)$ on $S'$, we have $G \vdash (\{\psi_k\}_{k \in K}, \forall p S' \Rightarrow \,)$, or equivalently $G \vdash (\{\psi_k\}_{k \in K} \Rightarrow \neg \forall p S')$. Applying the rule $(CKCEM)$ on the latter sequent and the provable sequents $\{\chi \Leftrightarrow \phi_k\}_{k \in K}$, we get $G \vdash \{\phi_k \triangleright \psi_k\}_{k \in K} \Rightarrow \chi \triangleright \neg \forall p S'$ or equivalently $G \vdash (\{\phi_k \triangleright \psi_k\}_{k \in K}, \neg (\chi \triangleright \neg \forall p S') \Rightarrow \,)$. Therefore, we have $G \vdash S  \cdot (\Apmcf S \Rightarrow \,)$.\\
If $S$ is of the form $S=(\{\phi_k \triangleright \psi_k\}_{k \in K} \Rightarrow \{\phi_l \triangleright \psi_l\}_{l \in L})$, where $L \neq \varnothing$, and there exists a $p$-free formula $\chi$ such that $V(\chi) \subseteq V(S)$ and $G \vdash \phi_r \Leftrightarrow \chi$, for any $r \in K \cup L$, then by definition, we have $\Apmcf S=(\chi \triangleright \forall p S')$, where $S'=(\{\psi_k\}_{k \in K} \Rightarrow \{\psi_l\}_{l \in L})$. As $L$ is non-empty, the sequent $S'$ is lower than $S$. Hence, using $\mathcal{U}_p(S)$ on $S'$, we have $G \vdash \{\psi_k\}_{k \in K}, \forall p S' \Rightarrow \{\psi_l\}_{l \in L}$. Since $L$ is non-empty and $G \vdash \phi_r \Leftrightarrow \chi$, for any $r \in K \cup L$, applying the rule $(CKCEM)$ we get $G \vdash \{\phi_k \triangleright \psi_k\}_{k \in K}, \chi \triangleright \forall p S' \Rightarrow  \{\phi_l \triangleright \psi_l\}_{l \in L}$. Therefore, we have $G \vdash S  \cdot (\Apmcf S \Rightarrow \,)$.\\
For $(ii')$, let $S \cdot (\bar{C} \Rightarrow \bar{D})$ be derivable in $G$ and the last rule be the rule $(CKCEM)$, for a $p$-free sequent $\bar{C} \Rightarrow \bar{D}$. We want to show that $\bar{C} \Rightarrow \Apmcf S, \bar{D}$ is also derivable in $G$. If $S$ is provable, as $\Apmcf S=\top$, we have $\bar{C} \Rightarrow \Apmcf S, \bar{D}$. Therefore, we assume that $S$ is not provable.
As the last rule used in the proof of $S \cdot (\bar{C} \Rightarrow \bar{D})$ is $(CMCEM)$, the sequent must have the form $(\{\phi_i \triangleright \psi_i\}_{i \in I} \Rightarrow \phi_0 \triangleright \psi_0, \{\phi_j \triangleright \psi_j\}_{j \in J})$, for some multisets $I$ and $J$ (possibly empty) and the rule is in the form:
\begin{center}
\begin{tabular}{c c c}
\AxiomC{$\{\phi_0 \Rightarrow \phi_r \;\;\; , \;\;\; \phi_r \Rightarrow \phi_0\}_{r \in I \cup J}$} 
 \AxiomC{$\{\psi_i\}_{i \in I} \Rightarrow \psi_{0}, \{\psi_j\}_{j \in J}$} 
   \RightLabel{\footnotesize$CKCEM$} 
 \BinaryInfC{$\{\phi_i \triangleright \psi_i\}_{i \in I} \Rightarrow \phi_{0} \triangleright \psi_{0}, \{\phi_j \triangleright \psi_j\}_{j \in J}$}
 \DisplayProof
\end{tabular}
\end{center}
Let $\bar{C}=\{\phi_i \triangleright \psi_i\}_{i \in M}$, for some (possibly empty) $M \subseteq I$. Hence, for any $i \in M$, the formulas $\phi_i$ and $\psi_{i}$ are $p$-free.
There are two cases to consider, either $\phi_0 \triangleright \psi_0 \in \bar{D}$ or $\phi_0 \triangleright \psi_0 \notin \bar{D}$. First, assume $\phi_0 \triangleright \psi_0 \in \bar{D}$. Then, $\bar{D}=\{\phi_0 \triangleright \psi_0\} \cup \{\phi_j \triangleright \psi_j\}_{j \in N}$, for some (possibly empty) $N \subseteq J$. Hence, for any $j \in N$, the formulas $\phi_j$ and $\psi_{j}$ and $\phi_0$ and $\psi_0$ are $p$-free.
If $M=I$ and $N=J$, then the sequent $\bar{C} \Rightarrow \bar{D}$ and hence $\bar{C} \Rightarrow \Apmcf S, \bar{D}$ is provable. Hence, assume either $M \neq I$ or $N \neq J$. Therefore, $M \cup N \neq I \cup J$.
Pick $a \in (I \cup J)-(M \cup N)$ and note that $\phi_a \triangleright \psi_a$ occurs in $S$.
By $\mathcal{U}_p(S)$ and the fact that $(\; \Rightarrow \phi_a)$ is lower than $S$, we know that $\forall p (\; \Rightarrow \phi_a)$ exists. Set $\chi=\forall p (\; \Rightarrow \phi_a)$. By $\mathcal{U}_p(S)$, we know that $V(\chi) \subseteq V(S)$ and $G \vdash \chi \Rightarrow  \phi_a$. Moreover, for the sequent $(\phi_0 \Rightarrow \,)$, since $\phi_0$ is $p$-free and $G \vdash \phi_0 \Rightarrow \phi_a$, by $\mathcal{U}_p(S)$, we have $G \vdash \phi_0 \Rightarrow \chi$.  As $G \vdash \phi_0 \Leftrightarrow \phi_r$, for any $r \in I \cup J$, we have $G \vdash \phi_r \Leftrightarrow \chi$, for any $r \in I \cup J$. Now, there are two cases to consider. Either $N=J$ or $N \neq J$. In the first case, we have $M \neq I$. Hence, $S=(\{\phi_i \triangleright \psi_i\}_{i \in I-M} \Rightarrow \,)$. Therefore, by definition and the fact that $I-M$ is non-empty, we have $\Apmcf S=\neg (\chi \triangleright \neg \forall p S')$, where $S'=(\{\psi_i\}_{i \in I-M} \Rightarrow \,)$. We know that $G \vdash \{\psi_i\}_{i \in I} \Rightarrow \psi_{0}, \{\psi_j\}_{j \in J}$ or equivalently $G \vdash S' \cdot (\{\psi_i\}_{i \in M} \Rightarrow \psi_{0}, \{\psi_j\}_{j \in N})$. As $S'$ is lower than $S$, by $\mathcal{U}_p(S)$, we have $G \vdash \{\psi_{i}\}_{i \in M} \Rightarrow \forall p S', \psi_0, \{\psi_{j}\}_{j \in N}$, or equivalently, $G \vdash \{\psi_{i}\}_{i \in M}, \neg \forall p S' \Rightarrow \psi_0, \{\psi_{j}\}_{j \in N}$. Applying the rule $(CMCEM)$ on the latter sequent and on the provable sequents $\chi \Leftrightarrow \phi_0$ and $\{\phi_0 \Leftrightarrow \phi_r\}_{r \in M \cup N}$, we get $G \vdash \{\phi_i \triangleright \psi_{i}\}_{i \in M}, \chi \triangleright \neg \forall p S' \Rightarrow  \phi_0 \triangleright \psi_0, \{\phi_j \triangleright \psi_{j}\}_{j \in N}$ or equivalently $G \vdash \{\phi_i \triangleright \psi_{i}\}_{i \in M} \Rightarrow \neg (\chi \triangleright \neg \forall p S'), \phi_0 \triangleright \psi_0, \{\phi_j \triangleright \psi_{j}\}_{j \in N}$. Therefore, we have $G \vdash (\bar{C} \Rightarrow \Apmcf S, \bar{D})$.\\ 
In the second case, we have $N \neq J$. Hence, $S=(\{\phi_i \triangleright \psi_i\}_{i \in I-M} \Rightarrow \{\phi_j \triangleright \psi_j\}_{j \in J-N})$. By definition and the fact that $J-N$ is non-empty, we have $\Apmcf S=(\chi \triangleright \forall p S')$, where $S'=(\{\psi_i\}_{i \in I-M} \Rightarrow \{\psi_j\}_{j \in J-N})$. We know that $G \vdash \{\psi_i\}_{i \in I} \Rightarrow \psi_{0}, \{\psi_j\}_{j \in J}$ or equivalently $G \vdash S' \cdot (\{\psi_i\}_{i \in M} \Rightarrow \psi_{0}, \{\psi_j\}_{j \in N})$. As $N \neq J$, we know that $S'$ is lower than $S$, and hence by $\mathcal{U}_p(S)$, we have $G \vdash \{\psi_{i}\}_{i \in M} \Rightarrow \forall p S', \psi_0, \{\psi_{j}\}_{j \in N}$. Applying the rule $(CMCEM)$ on the latter sequent and on the provable sequents $\chi \Leftrightarrow \phi_0$ and $\{\phi_0 \Leftrightarrow \phi_r\}_{r \in M \cup N}$, we get $G \vdash \{\phi_i \triangleright \psi_{i}\}_{i \in M} \Rightarrow (\chi \triangleright \forall p S'), \phi_0 \triangleright \psi_0, \{\phi_j \triangleright \psi_{j}\}_{j \in N}$. Therefore, we have $G \vdash (\bar{C} \Rightarrow \Apmcf S, \bar{D})$.\\ 
If $\phi_0 \triangleright \psi_0 \notin \bar{D}$, then, $\bar{D}=\{\phi_j \triangleright \psi_j\}_{j \in N}$, for some (possibly empty) $N \subseteq J$. Hence, for any $j \in N$, the formulas $\phi_j$ and $\psi_{j}$ are $p$-free. If $M \cup N=\varnothing$, then $S$ must be provable which is a contradiction. Hence, $M \cup N$ is non-empty. 
As $\phi_0 \triangleright \psi_0$ occurs in $S$, the sequent $(\phi_0 \Rightarrow \,)$ is lower than $S$. By $\mathcal{U}_p(S)$, we know that $\forall p (\phi_0 \Rightarrow \,)$ exists and $G \vdash (\phi_0, \forall p (\phi_0 \Rightarrow \,) \Rightarrow \;)$. Set $\chi=\neg \forall p (\phi_0 \Rightarrow \,)$. Hence, $G \vdash \phi_0 \Rightarrow \chi$. Moreover, for any $r \in M \cup N$, the sequent $( \Rightarrow \phi_{r})$ is $p$-free and $G \vdash \phi_0 \Rightarrow \phi_{r}$. Therefore, since $(\phi_0 \Rightarrow \,)$ is lower than $S$, by $\mathcal{U}_p(S)$, we have $G \vdash \; \Rightarrow \phi_r, \forall p(\phi_0 \Rightarrow)$, or equivalently $G \vdash \chi \Rightarrow \phi_r$, for any $r \in M \cup N$.
Since $G \vdash \phi_r \Leftrightarrow \phi_0$, for any $r \in I \cup J$ and $M \cup N$ is non-empty, we have $G \vdash \chi \Leftrightarrow \phi_0 \Leftrightarrow \phi_r$, for any $r \in M \cup N$ and hence for any $r \in I \cup J$.
Now, as $S=(\{\phi_i \triangleright \psi_i\}_{i \in I-M} \Rightarrow \phi_0 \triangleright \psi_0, \{\phi_j \triangleright \psi_j\}_{j \in J-N})$, by definition, we have $\Apmcf S=(\chi \triangleright \forall p S')$, where $S'=(\{\psi_i\}_{i \in I-M} \Rightarrow \psi_0, \{\psi_j\}_{j \in J-N})$. Again, we know that $G \vdash \{\psi_i\}_{i \in I} \Rightarrow \psi_{0}, \{\psi_j\}_{j \in J}$ or equivalently $G \vdash S'\cdot (\{\psi_i\}_{i \in M} \Rightarrow \{\psi_j\}_{j \in N})$. As $M \cup N$ is non-empty, $S'$ is lower than $S$, and hence by $\mathcal{U}_p(S)$ we have $G \vdash \{\psi_i\}_{i \in M} \Rightarrow \forall p S',  \{\psi_j\}_{j \in N}$. By applying the rule $(CKCEM)$, we can prove $G \vdash \{\phi_{i} \triangleright \psi_{i}\}_{i \in M} \Rightarrow (\chi  \triangleright \forall p S'), \{\phi_{j} \triangleright \psi_{j}\}_{j \in N}$ and hence $G \vdash (\bar{C} \Rightarrow \Apmcf S, \bar{D})$.
\end{proof}

\section{Negative Results} \label{SecNegativeResults}

In this section, we present some negative results on the failure of some interpolation properties for a number of modal and conditional logics. First, we prove that none of the logics $\mathsf{EC}$, $\mathsf{ECN}$, $\mathsf{CEC}$ and $\mathsf{CECN}$ enjoy Craig interpolation. Then, we show that for the extensions of $\mathsf{CKCEM}$, uniform Lyndon interpolation property leads to some peculiar behavior of the conditional. As a consequence, we show that the logics $\mathsf{CKCEM}$ and $\mathsf{CKCEMID}$ do not enjoy uniform Lyndon interpolation property. \\

We first investigate the modal logics $\mathsf{EC}$ and $\mathsf{ECN}$. 
\begin{theorem}\label{Thm:CIP EC ECN}
Logics $\mathsf{EC}$ and $\mathsf{ECN}$ do not have CIP. As a consequence, they do not have UIP or ULIP.
\end{theorem}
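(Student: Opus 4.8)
The plan is to produce, for $\mathsf{EC}$, explicit formulas $A$ and $B$ with $\mathsf{EC}\vdash A\to B$ but no Craig interpolant, and to argue that essentially the same witness handles $\mathsf{ECN}$. The guiding intuition is the asymmetry created by the axiom $(C)$: in $\mathsf{EC}$ one can \emph{merge} two boxes, $\Box\alpha\wedge\Box\beta\to\Box(\alpha\wedge\beta)$, but one cannot \emph{split} a box, $\Box(\alpha\wedge\beta)\not\to\Box\alpha$; adding $(M)$ restores splitting, and indeed $\mathsf{EC}+(M)=\mathsf{MC}$ and $\mathsf{ECN}+(M)=\mathsf{K}$, both of which have (uniform Lyndon) interpolation. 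So I would look for $A,B$ whose only shared vocabulary is a small set of atoms, say a single atom $p$, while $A$ has a private atom $q$ and $B$ a private atom $r$, such that the $\mathsf{EC}$-derivation of $A\to B$ is forced to route a box built over $q$ together with a box built over $r$ through a single $(C)$-merge, in such a way that the ``intermediate'' boxed formula genuinely depends on both private atoms and has no $\mathsf{EC}$-equivalent over $\{p\}$ alone.

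The first step is a normal-form analysis of $\mathsf{EC}$-provability via the cut-free calculus $\mathbf{GEC}$ of Section~\ref{Preliminaries} (for which cut and contraction are admissible): the rule $(EC)$ introduces $\Box\psi$ on the right only from premises witnessing $\psi\leftrightarrow\bigwedge_{i\in S}\phi_i$ for some nonempty subset $S$ of the boxed antecedent formulas $\Box\phi_i$, and a manageable description of the provable implications between the chosen $A$ and $B$ follows from tracking how this rule interleaves with the propositional rules. In particular this yields a short $\mathbf{GEC}$-derivation establishing $\mathsf{EC}\vdash A\to B$.

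The main obstacle — where the real work lies — is the non-existence of an interpolant, since the candidate interpolants over $\{p\}$ form an infinite set (already because of nested $\Box$), so a naive syntactic enumeration is hopeless. I would argue semantically, with neighborhood frames for $\mathsf{EC}$ (those in which each neighborhood family is closed under finite intersection), relying on soundness of $\mathsf{EC}$ for this class. The key lemma is a locality principle: the truth value at a point of any formula over $p$ depends only on the valuation of $p$ and on the membership of the $p$-definable sets in the relevant neighborhood families; hence no such formula can distinguish two pointed models agreeing on this data. I would then build two pointed $\mathsf{EC}$-models with a common ``$p$-reduct'', one satisfying $A$ and one falsifying $B$; if $\theta$ were an interpolant, $\mathsf{EC}\vdash A\to\theta$ would make $\theta$ true in the first, locality would carry this to the second, and $\mathsf{EC}\vdash\theta\to B$ would force $B$ there, a contradiction. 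The delicate point is to design $A$, $B$ and the two models so that $A\to B$ is genuinely $\mathsf{EC}$-valid — which requires $(C)$ to be used essentially — while the freedom to vary the $q$-part of one model and the $r$-part of the other independently of the shared $p$-reduct still leaves room for the two witnessing models to exist.

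For $\mathsf{ECN}$ I would take the same $A$, $B$: the $\mathbf{GEC}$-derivation of $A\to B$ is a fortiori a derivation in $\mathbf{GECN}$, and on the model side one adjoins the universe $W$ to every neighborhood family (this validates $(N)$, preserves closure under finite intersection, and alters the truth value of none of the formulas in play, since $W$ is not among the sets they test), so the same contradiction goes through. Hence $\mathsf{ECN}$ also lacks CIP. The final clause is then immediate: by the theorem in Section~\ref{Preliminaries} every logic with ULIP has both UIP and LIP, and CIP is a weakening of each of UIP and LIP; so a logic without CIP — in particular $\mathsf{EC}$ and $\mathsf{ECN}$ — can have neither UIP nor ULIP.
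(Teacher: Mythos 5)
There is a genuine gap: the proposal is a strategy outline, not a proof. The two load-bearing components are never supplied. First, you never exhibit the formulas $A$ and $B$; you only describe the properties you would like them to have ("I would look for $A,B$ whose only shared vocabulary is \dots such that the derivation is forced to route a box built over $q$ together with a box built over $r$ through a single $(C)$-merge"). Second, you never construct the two pointed models with a common $p$-reduct, and you yourself flag that "the delicate point is to design $A$, $B$ and the two models" — but that design \emph{is} the theorem; everything else is routine. In addition, the "locality principle" you lean on is not stated precisely enough to be checkable: for formulas over $\{p\}$ with nested boxes, "the $p$-definable sets" are themselves determined recursively by the neighborhood structure, so "agreeing on this data" has to be formalized as a bisimulation-style invariance lemma for (non-monotone) neighborhood models, which requires its own nontrivial proof. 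As it stands, nothing in the proposal could be checked for correctness, because every step that could fail is deferred.

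For comparison, the paper's argument is entirely proof-theoretic and fully explicit. It takes $\phi=\Box(\neg q\wedge r)$ and $\psi=\Box(p\wedge q)\to\Box\bot$ (so the shared atom is $q$, with $r$ private to $\phi$ and $p$ private to $\psi$), derives $\phi\to\psi$ by a single application of $(EC)$, and then supposes an interpolant $\theta$ over $\{q\}$ exists. It puts $\theta$ into a CNF-style normal form whose literals are atoms, negated atoms, or (negated) boxes, and analyses the cut-free $\mathbf{GEC}$/$\mathbf{GECN}$ derivations of $\phi\Rightarrow\theta$ and $\Box(p\wedge q),\theta\Rightarrow\Box\bot$, exploiting the facts that these calculi have no weakening rules and that $(EC)$ forces the boxed succedent formula to be provably equivalent to each boxed antecedent formula. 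The contradiction is then extracted by substituting $\bot$ and $\neg q$ for $r$ (resp.\ $\bot$ and $q$ for $p$) in the resulting equivalences. If you want to salvage your semantic route, you would need to (a) fix concrete $A$ and $B$ — the paper's $\phi,\psi$ would do, (b) state and prove the invariance lemma for $\{q\}$-formulas over intersection-closed neighborhood models, and (c) actually build the two models; only then would the final clause (ULIP $\Rightarrow$ UIP, LIP $\Rightarrow$ CIP, which you do state correctly) complete the argument.
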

\begin{proof}
Set $\phi=\Box (\neg q \wedge r)$ and $\psi= \Box (p \wedge q) \to \Box \bot$, where $p$, $q$, and $r$ are three 
distinct atomic formulas. We show that if $L$ is either $\mathsf{EC}$ or $\mathsf{ECN}$, the formula $\phi \to \psi$ is provable in $L$, while there is no formula $\theta$ such that $V(\theta) \subseteq \{q\}$ and both formulas $\phi \to \theta$ and $\theta \to \psi$ are provable in $L$.\\

\noindent To show that $\phi \to \psi$ is in $\mathsf{EC}$ and hence $\mathsf{ECN}$, we use the following proof tree in $\mathbf{GEC}$:
\begin{center}
\begin{tabular}{c c c}
 \AxiomC{$ p \wedge q, \neg q\wedge r \Rightarrow \bot$} 
  \AxiomC{$\bot \Rightarrow p \wedge q$}  
    \AxiomC{$\bot \Rightarrow \neg q\wedge r$} 
   \RightLabel{\footnotesize $EC$} 
 \TrinaryInfC{$\Box (p \wedge q), \Box (\neg q \wedge r) \Rightarrow \Box \bot$}
 \DisplayProof
\end{tabular}
\end{center}

\noindent Now, for the sake of contradiction, assume that the interpolant $\theta$ for $\phi \to \psi$ exists. Let $G$ be either 
$\mathbf{GEC}$ or $\mathbf{GECN}$. Hence, both 
$\Box (\neg q \wedge r) \Rightarrow \theta$ and $\Box (p \wedge q), \theta \Rightarrow \Box \bot$ 
are provable in $G$. We first analyse the general form of $\theta$.\\

\noindent First, note that by a simple induction on the structure of the formulas in the language $\mathcal{L}$, it is possible to show that any formula $A$ is $\mathbf{G3cp}$-equivalent to a CNF-style formula $\bigwedge_{i \in I} \bigvee_{j \in J_i} L_{ij}$, where $I$ and $J_i$'s are (possibly empty) finite sets, $V(L_{ij}) \subseteq V(A)$, and each $L_{ij}$ is either an atomic formula, the negation of an atomic formula, $\Box C$ or $\neg \Box C$, for a formula $C$. 
In particular, the formula $\theta$ is $\mathbf{G3cp}$-equivalent to a CNF-style formula in the form $\bigwedge_{i \in I} \bigvee_{j \in J_i} L_{ij}$. W.l.o.g, assume that for any $i \in I$, it is impossible to have both an atomic formula and its negation in $\{L_{ij}\}_{j \in J_i}$, and that none of sequents $(\, \Rightarrow L_{ij})$ or $( L_{ij} \Rightarrow \,)$ are provable in $G$. \\

Back to the main argument, as $\phi \Rightarrow \theta$ is provable in $G$, we have $\phi \Rightarrow \bigwedge_{i \in I} \bigvee_{j \in J_i} L_{ij} $ which means that for every $i \in I$, we have $\phi \Rightarrow \bigvee_{j \in J_i} L_{ij}$. Based on the form of each $L_{ij}$, we can transform the sequent to a provable sequent of the form $\phi, P, \Box \Gamma \Rightarrow Q, \Box \Delta$, where $P$ and $Q$ are multisets of atomic formulas and $\Gamma$ and $\Delta$ are multisets of formulas. We claim that for any $i \in I$, the corresponding $\Gamma$ is non-empty. Suppose $\Gamma=\varnothing$. Then, we have $\phi , P \Rightarrow Q, \Box \Delta$. This sequent must have been the conclusion of the rule $(EC)$, because for $G=\mathbf{GEC}$, the other possible case is being an axiom which implies either $\bot \in P$ or the existence of an atomic $s$ in $P \cap Q$. Both contradict 
the structure of $\bigvee_{j \in J_i} L_{ij}$. For $G=\mathbf{GECN}$, the same holds. Moreover, if the last rule is $(NW)$, then for an element 
$\delta \in \Delta$, the sequent $(\, \Rightarrow \delta)$ and hence $(\, \Rightarrow \Box \delta)$ must be provable in $G$ which contradicts 
the structure of $L_{ij}$'s again. Therefore, $T=(\phi , P \Rightarrow Q, \Box \Delta)$ is the consequence of $(EC)$ and hence, it has the form $(\Sigma, \Box \alpha_1, \cdots , \Box \alpha_n \Rightarrow \Box \beta, \Lambda)$ and the last rule is:
\begin{center}
\begin{tabular}{c c c}
 \AxiomC{$\alpha_1, \cdots , \alpha_n \Rightarrow \beta$} 
  \AxiomC{$\beta \Rightarrow \alpha_1 \;\;\;\;\; \cdots$}  
    \AxiomC{$\beta \Rightarrow \alpha_n$} 
   \RightLabel{\footnotesize $EC$} 
 \TrinaryInfC{$\Sigma, \Box \alpha_1, \cdots , \Box \alpha_n \Rightarrow \Box \beta, \Lambda$}
 \DisplayProof
\end{tabular}
\end{center}
Now there are two cases, 
either $\phi \in \Sigma$ or $\phi \notin \Sigma$. In the first case, as the formulas outside 
$\Sigma$ are 
either atomic or boxed, we must have no boxed formula outside 
$\Sigma$. 
This is impossible, as the form of the rule $(EC)$ dictates that we must have at least one boxed formula in the 
antecedent of the conclusion. Hence, $\phi \notin \Sigma$.\\
As all formulas 
in $T^{a}$ (except $\phi$) 
are atomic, we must have only one boxed formula in $T^a$, which is 
$\phi$. Therefore, in the premises of the rule, we have $\neg q \wedge r \Rightarrow \beta$ and $\beta \Rightarrow \neg q \wedge r$. Since $V(\beta) \subseteq V(\theta) \subseteq \{q\}$, 
then $\beta$ is $r$-free. If we once substitute $\bot$ for $r$ and 
then $\neg q$ for $r$, as $\beta$ remains intact, we will have $\beta \Leftrightarrow \bot$ and $\beta \Leftrightarrow \neg q$, which implies the contradictory $\bot \Leftrightarrow \neg q$. Hence, $\Gamma$ cannot be empty.\\

So far, we have proved that $\Gamma$ is non-empty, for any $i \in I$. Let $D_{i}$ be a formula in $\Gamma$ and note that $\neg \Box D_i$ occurs as one of the $L_{ij}$'s. Now, as $\Box (p \wedge q), \theta \Rightarrow \Box \bot$ or equivalently $\Box (p \wedge q), \bigwedge_{i \in I} \bigvee_{j \in J_i} L_{ij} \Rightarrow \Box \bot$ is provable in $G$, we have $\Box(p \wedge q), \{\neg \Box D_{i}\}_{i \in I} \Rightarrow \Box \bot$ is provable in $G$. Define $\mathcal{D}=\{D_{i}\}_{i \in I}$. Thus $S = (\Box (p \wedge q) \Rightarrow \Box \mathcal{D}, \Box \bot)$ is provable. As all the formulas are boxed, this must have been the conclusion of the rule $(EC)$. The reason is that $G$ has no weakening rules, and for $G=\mathbf{GEC}$, the only modal rule is ($EC$) and for $G=\mathbf{GECN}$, the last rule cannot be the rule $(NW)$ as it implies that for one $D \in \mathcal{D}$ the sequent $(\, \Rightarrow D)$ is provable in $G$ which means that $(\, \Rightarrow \Box D)$ and hence $(\neg \Box D \Rightarrow \,)$ is provable. The last contradicts with the structure of $L_{ij}$'s.
This implies that the last inference is of the form: 
\begin{center}
\begin{tabular}{c c c}
 \AxiomC{$\alpha_1, \cdots , \alpha_n \Rightarrow \beta$} 
  \AxiomC{$\beta \Rightarrow \alpha_1 \;\;\;\;\; \cdots$}  
    \AxiomC{$\beta \Rightarrow \alpha_n$} 
   \RightLabel{\footnotesize $EC$} 
 \TrinaryInfC{$\Sigma, \Box \alpha_1, \cdots , \Box \alpha_n \Rightarrow \Box \beta, \Lambda$}
 \DisplayProof
\end{tabular}
\end{center}
Similar as before, 
there are two cases, 
either $\beta=\bot$ or $\beta \in \mathcal{D}$.
If $\beta=\bot$, in the premises we must have $p \wedge q \Leftrightarrow \bot$ which is impossible. 
If $\beta \in \mathcal{D}$, it means that in the premises we 
had $p \wedge q \Leftrightarrow \beta$. Note that as $\beta \in \mathcal{D}$ we have $V(\beta) \subseteq V(\theta) \subseteq \{q\}$. Hence $\beta$ is $p$-free. Substituting once $\bot$ and then $q$ for $p$, leave $\beta$ intact and hence we get $\bot \Leftrightarrow \beta$ and $q \Leftrightarrow \beta$ which implies $q \Leftrightarrow \bot$, which is impossible. 
\end{proof}

For the conditional logics $\mathsf{CEC}$ and $\mathsf{CECN}$, let us first introduce the following two translations $t: \mathcal{L}_{\Box} \to \mathcal{L}_{\triangleright}$ and $s: \mathcal{L}_{\triangleright} \to \mathcal{L}_{\Box}$ that map atomic formulas and $\bot$ to themselves, commute with the propositional connectives and $(\Box \phi)^t=\top \triangleright \phi^t$ and $(\theta \triangleright \psi)^s=\Box \psi^s$, for any $\phi \in \mathcal{L}_{\Box}$ and $\theta , \psi \in \mathcal{L}_{\triangleright}$. Note that $(\phi^t)^s=\phi$, $V(\phi^t)=V(\phi)$, and $V(\psi^s) \subseteq V(\psi)$, for any $\phi \in \mathcal{L}_{\Box}$ and $\psi \in \mathcal{L}_{\triangleright}$. 

\begin{theorem}\label{Translations}
\begin{description}
\item[$(i)$]
If $\mathsf{EC} \vdash \phi$ then $\mathsf{CEC} \vdash \phi^t$.
\item[$(ii)$]
If $\mathsf{CEC} \vdash \psi$ then $\mathsf{EC} \vdash \psi^s$.
\end{description}
The same also holds for the pair $\mathsf{ECN}$ and $\mathsf{CECN}$. 
\end{theorem}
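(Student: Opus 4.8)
The plan is to prove all four implications by induction on the length of a Hilbert-style derivation, using the axiomatizations of Figures~\ref{figModalHilbert} and~\ref{figConditionalLogics}: $\mathsf{EC}$ is $\mathsf{CPC}$ closed under $MP$ and the rule $E$, together with all instances of $(C)$, while $\mathsf{CEC}$ is $\mathsf{CPC}$ closed under $MP$ and the rule $CE$, together with all instances of $(CC)$; the $\mathsf{N}$-variants add $(N)$ and $(CN)$. Two routine preliminary observations do most of the work. First, both $t$ and $s$ commute with the propositional connectives and fix atoms and $\bot$, so they send a substitution instance of a propositional tautology to a substitution instance of the same tautology; in particular each maps classical tautologies to classical tautologies and commutes with substitution. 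Second, $(\Box\phi)^t=\top\triangleright\phi^t$ makes $t$ a ``modal homomorphism'', $(\theta\triangleright\psi)^s=\Box\psi^s$ makes $s$ collapse conditional antecedents, and $\top^t=\top^s=\top$ since $\top=\bot\to\bot$ and both maps fix $\bot$.

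For $(i)$, I induct on a derivation of $\phi$ in $\mathsf{EC}$. If $\phi$ is a classical tautology, then so is $\phi^t$ by the first observation. The $MP$ step is immediate from $(\chi\to\phi)^t=\chi^t\to\phi^t$. For the rule $E$: from $\alpha\leftrightarrow\beta$ derived earlier we get $\mathsf{CEC}\vdash\alpha^t\leftrightarrow\beta^t$ by the induction hypothesis, and two applications of the rule $CE$ with antecedent premises $\top\leftrightarrow\top$ (a tautology) and $\alpha^t\leftrightarrow\beta^t$ (once with the indices swapped) give both implications, hence $\mathsf{CEC}\vdash(\top\triangleright\alpha^t)\leftrightarrow(\top\triangleright\beta^t)$, which is precisely $(\Box\alpha\leftrightarrow\Box\beta)^t$. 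Finally, the $t$-image of an instance $\Box\alpha\wedge\Box\beta\to\Box(\alpha\wedge\beta)$ of $(C)$ is $(\top\triangleright\alpha^t)\wedge(\top\triangleright\beta^t)\to(\top\triangleright(\alpha^t\wedge\beta^t))$, an instance of $(CC)$, and $(\Box\top)^t=\top\triangleright\top$, an instance of $(CN)$; this also settles $\mathsf{ECN}\Rightarrow\mathsf{CECN}$.

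For $(ii)$, I induct on a derivation of $\psi$ in $\mathsf{CEC}$. The tautology and $MP$ cases are as before. For the rule $CE$, concluding $(\phi_0\triangleright\psi_0)\to(\phi_1\triangleright\psi_1)$ from $\phi_0\leftrightarrow\phi_1$ and $\psi_0\leftrightarrow\psi_1$: its $s$-image is $\Box\psi_0^s\to\Box\psi_1^s$, and from $\mathsf{EC}\vdash\psi_0^s\leftrightarrow\psi_1^s$ (induction hypothesis) the rule $E$ gives $\mathsf{EC}\vdash\Box\psi_0^s\leftrightarrow\Box\psi_1^s$, hence the required implication — note that the antecedent premise is unused, since $s$ discards conditional antecedents. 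The $s$-image of an instance $(\phi\triangleright\chi)\wedge(\phi\triangleright\theta)\to(\phi\triangleright(\chi\wedge\theta))$ of $(CC)$ is $\Box\chi^s\wedge\Box\theta^s\to\Box(\chi^s\wedge\theta^s)$, an instance of $(C)$, and $(\phi\triangleright\top)^s=\Box\top$, an instance of $(N)$, which handles $\mathsf{CECN}\Rightarrow\mathsf{ECN}$. I do not expect a genuine obstacle here; the only points needing a little care are pinning down the precise reading of ``classical tautology'' in each language (as a substitution instance of a propositional tautology) so the base case goes through, and, if one treats the modal and conditional axioms as schemas whose instances arise by a substitution rule rather than as primitive, checking that $t$ and $s$ commute with substitution — both are immediate inductions on formula structure.
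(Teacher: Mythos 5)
Your proof is correct and follows the same route as the paper: induction on the length of Hilbert-style derivations, with the key observation that $t$ sends instances of $(C)$ (and $(N)$) to instances of $(CC)$ (and $(CN)$) while $s$ does the reverse, and that the rules $E$ and $CE$ simulate each other's images. The paper only spells out the $(C)$-axiom case and leaves the rest as "a simple induction"; you have filled in all the cases, and they check out.
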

\begin{proof}
The proof is by a simple induction on the length of the proofs. As it is easy, we will only spell out one case. Suppose in $(i)$, we have $\mathsf{EC} \vdash \phi$ and we want to show that $\phi^t$ is provable in $\mathsf{CEC}$, where $\phi$ is an instance of the axiom $(C)$. By assumption, $\phi$ is of the form $\Box A \wedge \Box B \to \Box (A \wedge B)$, where the formulas $A$ and $B$ are in the language $\mathcal{L}_{\Box}$. By definition, as $t$ commutes with the propositional connectives, $\phi^t$ will be of the form $(\top \triangleright A^t) \wedge (\top \triangleright B^t) \to (\top \triangleright A^t \wedge B^t)$. The latter is an instance of the axiom $(CC)$, as the formulas $A^t$ and $B^t$ are in the language $\mathcal{L}_{\triangleright}$. Therefore, $\phi^t$ is provable in $\mathsf{CEC}$.
\end{proof}

\begin{theorem}
Logics $\mathsf{CEC}$ and $\mathsf{CECN}$ do not enjoy Craig interpolation.
As a consequence, they do not have uniform or uniform Lyndon interpolation property.
\end{theorem}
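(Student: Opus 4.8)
The plan is to transfer the counterexample of Theorem~\ref{Thm:CIP EC ECN} along the two translations $t$ and $s$ from Theorem~\ref{Translations}. Recall that for $\mathsf{EC}$ (and likewise $\mathsf{ECN}$) the formulas $\phi=\Box(\neg q\wedge r)$ and $\psi=\Box(p\wedge q)\to\Box\bot$ witness the failure of CIP: $\phi\to\psi$ is provable, yet no $\theta$ with $V(\theta)\subseteq\{q\}$ interpolates it. First I would pass to the $t$-translates $\phi^t=\top\triangleright(\neg q\wedge r)$ and $\psi^t=(\top\triangleright(p\wedge q))\to(\top\triangleright\bot)$. Since $t$ commutes with the propositional connectives, $(\phi\to\psi)^t=\phi^t\to\psi^t$, so Theorem~\ref{Translations}$(i)$ gives $\mathsf{CEC}\vdash\phi^t\to\psi^t$ (resp.\ $\mathsf{CECN}\vdash\phi^t\to\psi^t$). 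Moreover $V(\phi^t)=V(\phi)=\{q,r\}$ and $V(\psi^t)=V(\psi)=\{p,q\}$, so $V(\phi^t)\cap V(\psi^t)=\{q\}$.

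Next I would argue by contradiction: suppose $\mathsf{CEC}$ (resp.\ $\mathsf{CECN}$) has CIP. Then there is $\chi\in\mathcal{L}_{\triangleright}$ with $V(\chi)\subseteq\{q\}$ and $\vdash\phi^t\to\chi$, $\vdash\chi\to\psi^t$ in that logic. Applying the $s$-translation and using that $s$ commutes with the connectives, we get $(\phi^t)^s\to\chi^s$ and $\chi^s\to(\psi^t)^s$, both provable in $\mathsf{EC}$ (resp.\ $\mathsf{ECN}$) by Theorem~\ref{Translations}$(ii)$. Because $\phi$ and $\psi$ live in $\mathcal{L}_{\Box}$ we have $(\phi^t)^s=\phi$ and $(\psi^t)^s=\psi$, so $\mathsf{EC}\vdash\phi\to\chi^s$ and $\mathsf{EC}\vdash\chi^s\to\psi$. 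Finally $V(\chi^s)\subseteq V(\chi)\subseteq\{q\}$, so $\chi^s$ is an interpolant for $\phi\to\psi$ in $\mathsf{EC}$ (resp.\ $\mathsf{ECN}$) with $V(\chi^s)\subseteq\{q\}$ --- contradicting Theorem~\ref{Thm:CIP EC ECN}. Hence $\mathsf{CEC}$ and $\mathsf{CECN}$ fail CIP.

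For the ``as a consequence'' clause I would note that UIP implies CIP in any logic: from a uniform interpolant one reads off an ordinary interpolant for $\phi\to\psi$ by existentially quantifying $\phi$ over the non-shared variables, exactly as in the LIP part of the theorem relating ULIP to LIP; and ULIP implies UIP by that same theorem. So the failure of CIP rules out both UIP and ULIP for $\mathsf{CEC}$ and $\mathsf{CECN}$.

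There is no genuinely hard step here --- the argument is pure bookkeeping on top of Theorems~\ref{Thm:CIP EC ECN} and~\ref{Translations}. The only points that deserve a moment's attention are that the $s$-translation only guarantees $V(\psi^s)\subseteq V(\psi)$ rather than equality (harmless, since this inclusion points in the direction needed to keep the back-translated interpolant within $\{q\}$), and that the round-trip identity $(\phi^t)^s=\phi$ is available precisely because $\phi,\psi$ originate in $\mathcal{L}_{\Box}$.
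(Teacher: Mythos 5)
Your proposal is correct and follows essentially the same route as the paper: translate the $\mathsf{EC}$/$\mathsf{ECN}$ counterexample along $t$, extract a putative interpolant $\chi$, and pull it back along $s$ to contradict Theorem~\ref{Thm:CIP EC ECN}, using $(\phi^t)^s=\phi$ and $V(\chi^s)\subseteq V(\chi)$ exactly as the authors do. The added remarks on why CIP failure rules out UIP and ULIP are standard and consistent with the paper's earlier theorem relating these properties.
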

\begin{proof}
We prove the claim for $\mathsf{CEC}$. The other is similar. Assume $\mathsf{CEC}$ has CIP.  We prove that $\mathsf{EC}$ also enjoys CIP which contradicts Theorem \ref{Thm:CIP EC ECN}. 
Set $\phi=\Box (\neg q \wedge r)$ and $\psi= \Box (p \wedge q) \to \Box \bot$, as in the proof of Theorem \ref{Thm:CIP EC ECN}, where it was shown that $\mathsf{EC} \vdash \phi \to \psi$, while there is no formula $\theta$ such that $V(\theta) \subseteq \{q\}$ and both formulas $\phi \to \theta$ and $\theta \to \psi$ are provable in $\mathsf{EC}$.
Since $\mathsf{EC} \vdash \phi \to \psi$, by the part $(i)$ of Theorem \ref{Translations}, we have $\mathsf{CEC} \vdash \phi^t \to \psi^t$. Hence, using the assumption that $\mathsf{CEC}$ has CIP, there is a formula $\chi \in \mathcal{L}_{\triangleright}$ such that $V(\chi) \subseteq V(\phi^t) \cap V(\psi^t)=V(\phi) \cap V(\psi)$ and $\mathsf{CEC} \vdash \phi^t \to \chi$ and $\mathsf{CEC} \vdash \chi \to \psi^t$. Now, apply the translation $s$. By the part $(ii)$ of Theorem \ref{Translations}, we have $\mathsf{EC} \vdash (\phi^t)^s \to \chi^s$ and $\mathsf{EC} \vdash \chi^s \to (\psi^t)^s$. As $(\phi^t)^s=\phi$, $(\psi^t)^s=\psi$, and $V(\chi^s) \subseteq V(\chi)$, the formula $\chi^s$ would be the interpolant for $\phi \to \psi$ in $\mathsf{EC}$, which is a contradiction.
\end{proof}
Now, to prove the last contribution of this section, namely that the logics $\mathsf{CKCEM}$ and $\mathsf{CKCEMID}$ do not enjoy uniform Lyndon interpolation property, first, let us make the following observation on the peculiarity of the combination of uniform Lyndon interpolation property and the axiom $(CEM)$:
\begin{theorem}\label{NegConditional}
Let $L \supseteq \mathsf{CKCEM}$ be a logic. Then, if $L$ enjoys uniform Lyndon interpolation property, then $L \vdash (p \triangleright r) \vee (q \triangleright \neg r)$.
\end{theorem}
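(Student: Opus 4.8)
The plan is to read the uniform interpolants supplied by ULIP as propositional quantifiers and to combine them with the closure of $L$ under substitution and with the axiom $(CEM)$. First I would isolate an easy propositional reduction inside $\mathsf{CKCEM}$: since $(CEM)$ for the antecedent $p$ gives $L\vdash(p\triangleright r)\vee(p\triangleright\neg r)$, a case split shows that the goal $(p\triangleright r)\vee(q\triangleright\neg r)$ already follows from $L\vdash(p\triangleright\neg r)\to(q\triangleright\neg r)$. Likewise, monotonicity of $\triangleright$ in the consequent gives $p\triangleright\bot\to p\triangleright r$ and $q\triangleright\bot\to q\triangleright\neg r$, so the goal also follows once one has $L\vdash\big(\neg(p\triangleright\bot)\wedge\neg(q\triangleright\bot)\big)\to\big((p\triangleright r)\vee(q\triangleright\neg r)\big)$, because then one case‑splits on $(p\triangleright\bot)\vee(q\triangleright\bot)$ versus its negation. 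Thus the task reduces to manufacturing one of these implications.

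For that I would apply ULIP to a formula coupling the two conditionals through a fresh atom. Let $s$ be an atom distinct from $p,q,r$ and set $\chi:=(p\triangleright s)\wedge(q\triangleright\neg s)$ (equivalently, work with its negation $(p\triangleright s)\to(q\triangleright s)$). Computing polarities, $V^+(\chi)=\{s\}$ and $V^-(\chi)=\{p,q,s\}$, so $s$ occurs in both polarities and ULIP produces an $s$-free formula $\exists s\,\chi$ with $V^+(\exists s\,\chi)=\varnothing$ and $V^-(\exists s\,\chi)\subseteq\{p,q\}$, together with $L\vdash\chi\to\exists s\,\chi$ and the extremal clause that every $s$-free $\delta$ with $L\vdash\chi\to\delta$ satisfies $L\vdash\exists s\,\chi\to\delta$. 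Since $\exists s\,\chi$ is $s$-free and $L$ is closed under substitution, substituting $s\mapsto\neg r$ into $L\vdash\chi\to\exists s\,\chi$ (using the provable equivalence $q\triangleright\neg\neg r\leftrightarrow q\triangleright r$ from the $CK$-rule) gives $L\vdash\big((p\triangleright\neg r)\wedge(q\triangleright r)\big)\to\exists s\,\chi$; combining this with the $(CEM)$-consequence that $\neg\big((p\triangleright r)\vee(q\triangleright\neg r)\big)$ entails $(p\triangleright\neg r)\wedge(q\triangleright r)$ yields $L\vdash\neg(\exists s\,\chi)\to\big((p\triangleright r)\vee(q\triangleright\neg r)\big)$. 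The only remaining job is to show $L\vdash(\exists s\,\chi)\to(p\triangleright\bot)\vee(q\triangleright\bot)$, for then the case split of the first paragraph closes the argument; this is where the extremal property of $\exists s\,\chi$ (tested against the $s$-free formula $(p\triangleright\bot)\vee(q\triangleright\bot)$), the $\mathsf{CK}$-normality of $\triangleright$ in both argument positions, and $(CEM)$ must be used together.

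The hard part will be exactly this last step. Over $\mathsf{CKCEM}$ itself $\chi$ does \emph{not} imply $(p\triangleright\bot)\vee(q\triangleright\bot)$, so the implication can only be forced by the \emph{existence} of the interpolant, not by mere instantiation; I expect one must iterate the construction — or run a symmetric copy with $p,q$ interchanged and a second fresh atom in the consequent — and then use the polarity bounds to argue that the resulting interpolants, being Boolean/nested combinations of $(\cdot\triangleright\bot)$-formulas, collapse to something that entails the goal. Two further subtleties to watch are the bookkeeping of positive versus negative variables when invoking clauses $(ii)$ and $(iv)$ of the definition of ULIP (one must check that each test formula fed to the interpolant is $s^{\diamond}$-free for the correct $\diamond$), and the genuinely non-normal phenomenon that $\phi\triangleright\bot$ is consistent in $\mathsf{CK}$, so every local step about "the selected world'' carries an impossible-antecedent side case that has to be discharged via $\phi\triangleright\bot\to\phi\triangleright\psi$.
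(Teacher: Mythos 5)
Your proposal does not close the argument, and the place where it stops is precisely the crux. The whole construction funnels into the claim $L\vdash(\exists s\,\chi)\to(p\triangleright\bot)\vee(q\triangleright\bot)$, but the only tool you have for deriving consequences of $\exists s\,\chi$ is the extremal clause, which yields $L\vdash\exists s\,\chi\to\delta$ only for $s$-free $\delta$ that are \emph{already} provable consequences of $\chi$ in $L$ --- and you concede yourself that $(p\triangleright\bot)\vee(q\triangleright\bot)$ is not such a consequence over $\mathsf{CKCEM}$. ``Forced by the existence of the interpolant'' is a hope, not a derivation, and the suggested iteration is not carried out. There is also a structural reason to be pessimistic about this route: your central object is a fresh atom $s$ eliminated in \emph{both} polarities and tested against $s$-free formulas, i.e.\ exactly the machinery of plain uniform interpolation. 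Since the paper shows $\mathsf{CKCEM}$ \emph{does} have UIP while failing to prove $(p\triangleright r)\vee(q\triangleright\neg r)$, any argument whose interpolation input can be simulated by UIP alone cannot succeed; you invoke the polarity bound $V^+(\exists s\,\chi)=\varnothing$ in passing but never make it do irreplaceable work.

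The paper's proof is much shorter and puts the Lyndon feature exactly where it is indispensable. From $(CEM)$ one has $L\vdash\neg(q\triangleright r)\to(q\triangleright\neg r)$. Set $\phi=\forall^- q\,(q\triangleright\neg r)$. Since $V^+(q\triangleright\neg r)=\varnothing$, the variable condition forces $\phi$ to be $q^+$-free as well as $q^-$-free, hence fully $q$-free. The decisive step is that the \emph{test} formula $\neg(q\triangleright r)$ is $q^-$-free although it is not $q$-free, so clause $(ii)$ of ULIP applies and gives $L\vdash\neg(q\triangleright r)\to\phi$; this is exactly what plain UIP cannot deliver. Substituting $p$ for $q$ leaves $\phi$ intact, so $L\vdash\neg(p\triangleright r)\to\phi\to(q\triangleright\neg r)$, which is the conclusion. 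If you want to salvage your write-up, replace the fresh-atom construction by this single application of $\forall^- q$ to $(q\triangleright\neg r)$; the reductions in your first paragraph then become unnecessary.
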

\begin{proof}
Let $p$, $q$ and $r$ be three distinct atoms. Using the axiom $(CEM)$, we have $\mathsf{CKCEM} \vdash (q \triangleright r) \vee (q \triangleright \neg r)$, and therefore
$L \vdash (q \triangleright r) \vee (q \triangleright \neg r)$. Then, $L \vdash \neg (q \triangleright r) \to (q \triangleright \neg r)$. Note that $V^+(q \triangleright \neg r)=\varnothing$ and $V^-(\neg (q \triangleright r))=\{r\}$. Set $\phi=\forall^{-} q \, (q \triangleright \neg r)$. By the definition of uniform Lyndon interpolation, $\phi$ is $q^-$-free, $L \vdash \phi \to (q \triangleright \neg r)$ and $V^{+}(\phi) \subseteq V^+(q \triangleright \neg r)=\varnothing$. By the latter, $\phi$ is $q^+$-free. Hence, $\phi$ is $q$-free which means that $q$ does not occur in $\phi$. Again, by the definition of uniform Lyndon interpolation and the facts that $\neg (q \triangleright r)$ is $q^-$-free and $L \vdash \neg (q \triangleright r) \to (q \triangleright \neg r)$, we have $L \vdash \neg (q \triangleright r) \to \phi$, using the property $(ii)$ in Definition \ref{DfnUniformInterpolation}. As $\phi$ is $q$-free, we can substitute $q$ by $p$, keeping $\phi$ intact. Hence, $L \vdash \neg (p \triangleright r) \to \phi$. Finally, as $L \vdash \phi \to (q \triangleright \neg r)$, we have $L \vdash \neg (p \to r) \to (q \triangleright \neg r)$. Hence, $L \vdash (p \triangleright r) \vee (q \triangleright \neg r)$.
\end{proof}

\begin{corollary}
Logics $\mathsf{CKCEM}$ and $\mathsf{CKCEMID}$ do not enjoy uniform Lyndon interpolation property.
\end{corollary}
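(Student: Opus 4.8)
The plan is to read off the Corollary from Theorem~\ref{NegConditional} together with a short unprovability argument. Suppose, for contradiction, that $\mathsf{CKCEM}$ enjoys uniform Lyndon interpolation (the logic $\mathsf{CKCEMID}$ is handled identically, working with $\mathbf{GCKCEMID}$ and the rule $(CKCEMID)$ in place of $\mathbf{GCKCEM}$ and $(CKCEM)$). Applying Theorem~\ref{NegConditional} to $L=\mathsf{CKCEM}$ yields $\mathsf{CKCEM}\vdash(p\triangleright r)\vee(q\triangleright\neg r)$ for three distinct atoms $p,q,r$. Hence it suffices to show that this formula is \emph{not} a theorem of $\mathsf{CKCEM}$, which is the only remaining task.

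For that I would work in the cut-free calculus $\mathbf{GCKCEM}$ (cut-elimination and soundness--completeness for $\mathsf{CKCEM}$ being available from \cite{PattinsonSchroeder}) and run root-first proof search on $(\,\Rightarrow(p\triangleright r)\vee(q\triangleright\neg r))$. Since the succedent formula is a disjunction, the last rule is either $R\vee$ or $Rw$; the $Rw$ case would require the empty sequent, which is not derivable, so the last rule is $R\vee$ and we reduce to $(\,\Rightarrow p\triangleright r,\,q\triangleright\neg r)$. Here both succedent formulas are conditionals, so the last rule is $(CKCEM)$, $Rw$, or $Lw$; $Lw$ is impossible (empty antecedent), while $Rw$ sends us to $(\,\Rightarrow p\triangleright r)$ or $(\,\Rightarrow q\triangleright\neg r)$, which I dispatch separately: in $\mathbf{GCKCEM}$ these can only come from $(CKCEM)$ with empty antecedent part and right premise $(\,\Rightarrow r)$, resp.\ $(\,\Rightarrow\neg r)$, and neither $(\,\Rightarrow r)$ nor (after $R\to$) $(r\Rightarrow\bot)$ is derivable. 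Finally, any instance of $(CKCEM)$ with conclusion $(\,\Rightarrow p\triangleright r,\,q\triangleright\neg r)$ must have empty antecedent part ($I=\varnothing$) and split the two-element succedent into $\phi_0\triangleright\psi_0$ and a singleton $\{\phi_j\triangleright\psi_j\}_{j\in J}$, forcing the left premises $p\Rightarrow q$ and $q\Rightarrow p$ (or the symmetric choice); since $p$ and $q$ are distinct atoms these are not axioms and no rule applies backwards, so no such instance closes. Thus $\mathbf{GCKCEM}\not\vdash(\,\Rightarrow(p\triangleright r)\vee(q\triangleright\neg r))$, contradicting completeness, and the Corollary follows; for $\mathbf{GCKCEMID}$ the rule $(CKCEMID)$ additionally places $\phi_0$ in the antecedent of the right premise, which only makes the obstructing premises harder to prove (e.g.\ $(\,\Rightarrow p\triangleright r)$ now demands $p\Rightarrow r$), so the argument goes through verbatim.

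Essentially all the difficulty already lies in Theorem~\ref{NegConditional}; in the remaining step the one thing to be careful about is that the proof search is genuinely exhaustive, i.e.\ that every backward application of $(CKCEM)$ and of the explicit weakening rules has been accounted for, and in particular every choice of which succedent conditional plays the role of $\phi_0\triangleright\psi_0$. Should invoking the cut-free calculi of \cite{PattinsonSchroeder} be considered undesirable, the same non-theorem can instead be produced semantically: in the selection-function semantics for $\mathsf{CKCEM}$ one takes a pointed model with $f(w,[p])\not\subseteq[r]$ and $f(w,[q])\cap[r]\neq\varnothing$, which falsifies $(p\triangleright r)$ and $(q\triangleright\neg r)$ simultaneously at $w$; this avoids cut-elimination entirely at the cost of recalling the semantics.
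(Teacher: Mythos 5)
Your proposal is correct and follows essentially the same route as the paper: invoke Theorem~\ref{NegConditional} to reduce the Corollary to the non-provability of $(p \triangleright r) \vee (q \triangleright \neg r)$, then run an exhaustive root-first search in the cut-free calculus, observing that the $(CKCEM)$/$(CKCEMID)$ case is blocked by the underivable left premises $p \Rightarrow q$ and $q \Rightarrow p$, and the weakening case reduces to the underivable $(\,\Rightarrow r)$ or $(\,\Rightarrow \neg r)$. Your version is in fact slightly more explicit than the paper's (which leaves the failure of $p \Leftrightarrow q$ as "a simple combinatorial case check"), and the optional semantic countermodel is a fine alternative but not needed.
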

\begin{proof}
Let $L=\mathsf{CKCEM}$ (resp., $\mathsf{CKCEMID}$) and $G$ be the corresponding sequent system for $L$ provided in Section \ref{Preliminaries}. As $L \supseteq \mathsf{CKCEM}$, by Theorem \ref{NegConditional}, if $L$ enjoys uniform Lyndon interpolation property, then $L \vdash (p \triangleright r) \vee (q \triangleright \neg r)$ which implies the provability of the sequent $(\, \Rightarrow p \triangleright r, q \triangleright \neg r)$ in $G$. Call this proof $\cald$. As the sequent is not an axiom, the last rule in $\cald$ is either the weakening rule or $(CKCEM)$ (resp., $(CKCEMID)$). In the latter case, the proof looks like the following
\begin{center}
\begin{tabular}{c c}
\AxiomC{$\cald'$}
\noLine
 \UnaryInfC{$p \Leftrightarrow q$} 
 \AxiomC{$\cald''$}
 \noLine
    \UnaryInfC{$\Rightarrow r, \neg r$}
   \RightLabel{\footnotesize$CKCEM$} 
 \BinaryInfC{$\Rightarrow p \triangleright r , q \triangleright \neg r$}
 \DisplayProof
 &
 \AxiomC{$\cald'$}
\noLine
 \UnaryInfC{$p \Leftrightarrow q$} 
 \AxiomC{$\cald''$}
 \noLine
    \UnaryInfC{$p,q \Rightarrow r, \neg r$}
   \RightLabel{\footnotesize$CKCEMID$} 
 \BinaryInfC{$\Rightarrow p \triangleright r , q \triangleright \neg r$}
 \DisplayProof
\end{tabular}
\end{center}
\noindent By a simple combinatorial case check on the possible cases of the last rule in the proof $\cald'$, we can easily show that such a proof cannot exist. Therefore, the last rule in the proof $\cald$ cannot be $(CKCEM)$ (resp., $(CKCEMID)$). In the case that the last rule in $\cald$ is the weakening rule, either $\Rightarrow (p \triangleright r)$ or $\Rightarrow(q \triangleright \neg r)$ is provable in $G$.
As none of these sequents is an axiom and the last rule of their proofs cannot be the weakening rule, then the last rule must be $(CKCEM)$ (resp., $(CKCEMID)$). Therefore, since in the premises of the rule $(CKCEM)$ (resp., $(CKCEMID)$), both $I$ and $J$ are empty, then we either have $G \vdash (\, \Rightarrow r)$ or $G \vdash (\, \Rightarrow \neg r)$ (resp., $G \vdash (p \Rightarrow r)$ or $G \vdash (q \Rightarrow \neg r)$). Again, it is easy to check that none of theses cases can take place.
\end{proof}

\bibliographystyle{plainurl}
\bibliography{newbib}


\end{document}